\newcommand{\vsig}{\varsigma}
\newcommand{\Be}{\begin{equation}}
\newcommand{\Ee}{\end{equation}}
\newcommand{\Ba}[1]{\begin{array}{#1}}
\newcommand{\Ea}{\end{array}}
\newcommand{\Bea}{\begin{eqnarray}}
\newcommand{\Eea}{\end{eqnarray}}
\newcommand{\Beas}{\begin{eqnarray*}}
\newcommand{\Eeas}{\end{eqnarray*}}
\newcommand{\Benu}{\begin{enumerate}}
\newcommand{\Eenu}{\end{enumerate}}
\newcommand{\Bi}{\begin{itemize}}
\newcommand{\Ei}{\end{itemize}}
\def\intslash{\rlap{\kern  .32em $\mspace {.5mu}\backslash$ }\int}
\def\qsl{{\rlap{\kern  .32em $\mspace {.5mu}\backslash$ }\int_{Q_x}}}
\newcommand {\Span} {\operatorname{span}}
\def\emph#1{{\it #1 }}
\def\ga{\gamma}
\def\cf{{\it cf}}
\def\dist{{\text{\it dist}}}
\def\supp{{\text{\rm supp}}}
\def\inn#1#2{\langle#1,#2\rangle}
\def\noi{\noindent}
\def\card{\text{\rm card}}
\def\lc{\lesssim}
\def\eps{\varepsilon}
\def\ep{\epsilon}
\def\ka{\kappa}
             \def\La{\Lambda}
\def\om{\omega}
\def\fA{{\mathfrak {A}}}
\def\fM{{\mathfrak {M}}}
\def\fP{{\mathfrak {P}}}
\def\fZ{{\mathfrak {Z}}}
\def\fa{{\mathfrak {a}}}
\def\bbE{{\mathbb {E}}}
\def\bbN{{\mathbb {N}}}
\def\bbR{{\mathbb {R}}}
\def\bbZ{{\mathbb {Z}}}
\def\sD{{\mathscr {D}}}
\def\sH{{\mathscr {H}}}
\def\cA{{\mathcal {A}}}
\def\cE{{\mathcal {E}}}
\def\cF{{\mathcal {F}}}
\def\cS{{\mathcal {S}}}
\def\cU{{\mathcal {U}}}
\def\cY{{\mathcal {Y}}}
\def\cZ{{\mathcal {Z}}}
\def\tI{{\widetilde{I}}}
\def\be#1{\begin{equation}\label{ #1}}
\def\endeq{\end{equation}}
\def\endal{\end{align}}
\def\bas{\begin{align*}}
\def\eas{\end{align*}}
\def\bi{\begin{itemize}}
\def\ei{\end{itemize}}
\def\eps{\varepsilon}
\def\emph#1{{\it #1}}
\def\textbf#1{{\bf #1}}
\def\bbone{{\mathbbm 1}}
\theoremstyle{plain}
  \newtheorem{theorem}{Theorem}[section]
   \newtheorem{proposition}[theorem]{Proposition}
   \newtheorem{lemma}[theorem]{Lemma}
   \newtheorem{corollary}[theorem]{Corollary}
\theoremstyle{remark}
   \newtheorem{remark}[theorem]{Remark}
\theoremstyle{definition}
   \newtheorem{definition}[theorem]{Definition}
\newcommand {\SE} {{\mathbb E}}
\newcommand {\SN} {{\mathbb N}}
\newcommand {\SR} {{\mathbb R}}
\newcommand {\SZ} {{\mathbb Z}}
\newcommand {\SNz} {{\mathbb \SN_0}}
\newcommand {\SRd} {{\mathbb R^d}}
\newcommand {\SZd} {{\mathbb Z^d}}
\newcommand {\e} {{\varepsilon}}
\newcommand{\dt}{{\delta}}
\newcommand {\mand} {{\quad\mbox{and}\quad}}
\renewcommand {\mid} {{\,\,\,\colon\,\,\,}}
\def\supp{\mathop{\rm supp}}
\def\dist{\mathop{\rm dist}}
\def\sign{\mathop{\rm sign}}
\newcommand {\ProofEnd} {
             \begin{flushright} \vskip -0.2in $\Box$ \end{flushright}}
\newcommand {\Ds} {\displaystyle}
\newcounter{reg}
\newcommand{\sline}{{\smallskip

\noindent}}
\newcommand{\bline}{{\medskip

\noindent}}
\def\Xint#1{\mathchoice
{\XXint\displaystyle\textstyle{#1}}%
{\XXint\textstyle\scriptstyle{#1}}%
{\XXint\scriptstyle\scriptscriptstyle{#1}}%
{\XXint\scriptscriptstyle\scriptscriptstyle{#1}}%
\!\int}
\def\XXint#1#2#3{{\setbox0=\hbox{$#1{#2#3}{\int}$ }
\vcenter{\hbox{$#2#3$ }}\kern-.6\wd0}}
\def\mint{\Xint-}
\begin{document}

\title
[Haar system in Triebel-Lizorkin spaces: endpoint results]
{The Haar system in Triebel-Lizorkin spaces:  
 endpoint results}

\author[G. Garrig\'os \ \ \ A. Seeger \ \ \ T. Ullrich] {Gustavo Garrig\'os   \ \ \ \   Andreas Seeger \ \ \ \ Tino Ullrich}

\address{Gustavo Garrig\'os\\ Department of Mathematics\\University of Murcia\\30100 Espinardo\\Murcia, Spain} \email{gustavo.garrigos@um.es}

\address{Andreas Seeger \\ Department of Mathematics \\ University of Wisconsin \\480 Lincoln Drive\\ Madison, WI,53706, USA} \email{seeger@math.wisc.edu}
\address{Tino Ullrich\\ Fakult\"at f\"ur Mathematik\\ Technische Universit\"at  Chemnitz\\09107 Chemnitz, Germany}
\email{tino.ullrich@mathematik.tu-chemnitz.de}

\subjclass[2010]{46E35, 46B15, 42C40}
\keywords{Schauder basis, basic sequence, unconditional basis, dyadic averaging operators, Haar system, Sobolev and Besov spaces,  Triebel-Lizorkin spaces}

\maketitle

{\centering\footnotesize Dedicated to Guido Weiss, with affection, on his 91st birthday\par}


\begin{abstract}
We characterize the Schauder and  unconditional basis properties for the Haar system
in the Triebel-Lizorkin spaces $F^s_{p,q}(\SR^d)$, at the endpoint cases $s=1$, $s=d/p-d$ and $p=\infty$.
Together with the earlier results in \cite{su,gsu}, this completes the picture for such properties in the Triebel-Lizorkin scale,
and complements a similar study for the Besov spaces given in \cite{gsu-endpt}. 
\end{abstract}


\section{Introduction and statements of main results}

In this paper we essentially complete the study of the basis properties for the (inhomogeneous) Haar system in the scale of Triebel-Lizorkin spaces $F^s_{p,q}(\bbR^d)$. 
In particular, we describe the behavior at the endpoint cases which was left open in our  earlier work \cite{gsu}.
Similar endpoint questions for the family of Besov spaces have been presented in the companion paper \cite{gsu-endpt}. 
We note that markedly different outcomes occur for each family, in both the non-endpoint situations (\cite{triebel78, triebel-bases, su, sudet, gsu})
and the endpoint (\cite{gsu-endpt}, \cite{oswald}) situations.
 
We now set the basic notation required to state the results. Given the one variable functions
 $h^{(0)}=\bbone_{[0,1)}$ and $h^{(1)}=\bbone_{[0,1/2)}-\;\bbone_{[1/2,1)}$,
for each $\ep=(\ep_1,\ldots,\ep_d)\in\{0,1\}^d$, $k\in \bbN_0$ and $\nu=(\nu_1,\dots, \nu_d)\in \bbZ^d$, we define
\[
h^{\ep}_{k,\nu}(x)
:= \prod_{i=1}^d h^{(\ep_i)}(2^kx_i-\nu_i),\quad x=(x_1,\ldots,x_d)\in\SR^d.
\]
Then, the \emph{Haar system} is the collection of functions 
\[
\sH_d=\Big\{h^{\vec 0}_{0,\nu}\Big\}_{\nu \in\SZd}\cup \Big\{h^{\ep}_{k,\nu}\mid k\in\SNz,\;\nu\in\SZd,\;\ep\in\Upsilon\Big\},
\]
where we denote $\Upsilon=\{0,1\}^d\setminus\{\vec 0\}$. 

Consider $F^s_{p,q}(\SR^d)$ with the usual definition in \cite[\S2.3.1]{Tr83} or \cite[\S12]{FrJa90}. 
To investigate the Schauder basis properties of $\sH_d$, we initially assume that
 $0<p,q<\infty$ (so that $\cS$ is dense in $F^s_{p,q}$, and the latter is separable), 
and that
\Be
 h^\ep_{k,\nu}\in F^s_{p,q}\mand h^\ep_{k,\nu}\in (F^s_{p,q})^*,\quad\forall\,\ep,k,\nu.
\label{BBdual}
\Ee
Given an enumeration  $\cU=\big\{u_n=h^{\ep(n)}_{k(n),\nu(n)}\big\}_{n=1}^\infty$ of $\sH_d$, we consider the corresponding partial sum operators 
\Be
S_Rf=\label{SR} S_R^\cU f= \sum_{n=1}^R 
u_n^*(f) u_n \,,\quad R\in\SN,
\Ee
where  the linear functionals $u_n^*$ are defined by
\Be
u_n^* (f) =
2^{k(n)d} \langle f,h_{k(n),\nu(n)}^{\ep(n)}\rangle \,,\quad f\in\cS.
\label{unstar}
\Ee
The condition in \eqref{BBdual} ensures that these operators are well-defined and individually bounded
in $F^s_{p,q}(\SR^d)$. Also, $u^*_n(u_m)=\delta_{n,m}$, $n,m\geq1$.

The basis properties of $\cU$ are related to the validity of the bound \Be
\label{uniformSRbound}
  \sup_{R\in\SN}\|S_R^\cU\|_{F^s_{p,q}\to F^s_{p,q}}<\infty.
\Ee 
Indeed, if  $\text{span} \,\sH_d $ is dense in $F^s_{p,q}$, then 
\eqref{uniformSRbound} is equivalent to  $\cU$ being a \emph{Schauder basis} of $F^s_{p,q}$, that is 
\Be\label{basicseqdef}\lim_{R\to\infty}\|S_R^\cU f-f\|_{F^s_{p,q}}=0
\Ee
for every $f\in F^s_{p,q}$. 
Moreover, the basis is \emph{unconditional} if and only if the bound in \eqref{uniformSRbound} is uniform in all enumerations $\cU$.
Finally, if $ \Span\sH_d$ is not assumed to be dense, then \eqref{uniformSRbound} still implies that $\cU$ is a \emph{basic sequence} of $F^s_{p,q}$, meaning that
\eqref{basicseqdef} holds for all $f$ in the $F^s_{p,q}$-closure of $ \Span\sH_d$.

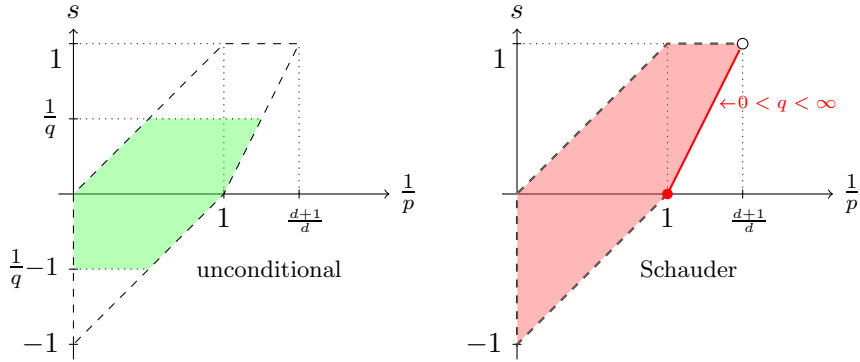
\begin{figure}[h]
 \centering
\subfigure
{\begin{tikzpicture}[scale=2]

\node [right] at (0.75,-0.5) {{\footnotesize unconditional}};

\draw[->] (-0.1,0.0) -- (2.1,0.0) node[right] {$\frac{1}{p}$};
\draw[->] (0.0,-0.0) -- (0.0,1.1) node[above] {$s$};
\draw (0.0,-1.1) -- (0.0,-1.0)  ;

\draw (1.0,0.03) -- (1.0,-0.03) node [below] {$1$};
\draw (1.5,0.03) -- (1.5,-0.03) node [below] {{\tiny $\;\;\frac{d+1}{d}$}};
\draw (0.03,1.0) -- (-0.03,1.00);
\node [left] at (0,0.9) {$1$};
\draw (0.03,.5) -- (-0.03,.5) node [left] {$\tfrac{1}{q}$};
\draw (0.03,-.5) -- (-0.03,-.5) node [left] {$\tfrac{1}{q}${\small{$-1$}}};
\draw (0.03,-1.0) -- (-0.03,-1.00) node [left] {$-1$};

\draw[dotted] (1.0,0.0) -- (1.0,1.0);
\draw[dotted] (0,1.0) -- (1.0,1.0);
\draw[dotted] (1.5,0.0) -- (1.5,1.0);

\path[fill=green!70, opacity=0.4] (0.0,0.0) -- (.5,.5)-- (1.25,0.5) -- (1,0)--(.5,-.5) -- (0,-0.5)--(0,0);
\draw[dotted] (0,0.5)--(1.25,0.5);
\draw[dotted] (0,-0.5)--(0.5,-0.5);

\draw[dashed] (0.0,-1.0) -- (0.0,0.0) -- (1.0,1.0) -- (1.5,1.0) -- (1.0,0.0) --
(0.0,-1.0);

\end{tikzpicture}
}
\subfigure
{
\begin{tikzpicture}[scale=2]

\node [right] at (0.75,-0.5) {{\footnotesize Schauder}};

\draw[->] (-0.1,0.0) -- (2.1,0.0) node[right] {$\frac{1}{p}$};
\draw[->] (0.0,-0.0) -- (0.0,1.1) node[above] {$s$};
\draw (0.0,-1.1) -- (0.0,-1.0)  ;

\draw (1.0,0.03) -- (1.0,-0.03) node [below] {$1$};
\draw (1.5,0.03) -- (1.5,-0.03) node [below] {{\tiny $\;\;\frac{d+1}{d}$}};
\draw (0.03,1.0) -- (-0.03,1.00);
\node [left] at (0,0.9) {$1$};
\draw (0.03,-1.0) -- (-0.03,-1.00) node [left] {$-1$};

\draw[dotted] (1.0,0.0) -- (1.0,1.0);
\draw[dotted] (0,1.0) -- (1.0,1.0);
\draw[dotted] (1.5,0.0) -- (1.5,1.0);

\draw[dashed, thick] (1,0) -- (0.0,-1.0)--(0.0,0.0) -- (1,1)--(1.5,1);


\draw[white, fill=red!70, opacity=0.4] (0,0) -- (1,1)
-- (1.5,1.0) -- (1.0,0) --(0,-1)--(0,0);

\draw[thick,red] (1.48,0.96) -- (1.0,0.0);
\fill[red] (1,0) circle (1pt);
\fill[white] (1.5,1) circle (1pt);
\draw (1.5,1) circle (1pt);
\draw[<-, thin, red] (1.35,0.6)--(1.45,0.6);
\node [red, right] at (1.4,0.6) {{\tiny $ {0<q<\infty}$}};


\end{tikzpicture}

}
\caption{Parameter domain $\fP$  for  $\sH_d$ in $F^s_{p,q}(\SRd)$. The left region corresponds to unconditionality, and right region to the Schauder basis property.}\label{fig1}
\end{figure}

The pentagon $\fP$ depicted in Figure \ref{fig1} shows the natural index region for these problems; outside its closure either \eqref{BBdual} or the density of $\Span\sH_d$ fail.
The open pentagon corresponds to the range
 $\frac{d}{d+1}<p<\infty$,  $0<q <\infty$,  and  \Be
\max\Big\{d(\tfrac 1p-1),\tfrac 1p-1\Big\} <s< \min\Big\{1,\tfrac 1p\Big\}.\label{range1}\Ee  
Triebel showed in \cite[Theorem 2.21]{triebel-bases} that $\sH_d$ is an unconditional basis of $F^s_{p,q}(\SR^d)$ in the green shaded region, where the additional restriction
\Be
 \max\big\{d(\tfrac1q-1),\tfrac1q-1\big\}<s<\tfrac1q\,
\label{q_range}
\Ee
is imposed. The necessity of condition \eqref{q_range} for unconditionality was established in \cite{su, sudet} (for $d=1$).
On the other hand, we recently showed in \cite{gsu} that natural enumerations of $\sH_d$ form a Schauder basis of $F^s_{p,q}(\SR^d)$ in the full open pentagon $\fP$. 
Except for a few trivial cases, the behavior at the points $(1/p,s)$ lying in the boundary of $\fP$ was left unexplored.

In this paper we 
attempt to 
fill this gap giving an answer with a  positive or negative outcome depending on the secondary index $q$. 
Moreover, when possible, the negative answer is replaced by a suitable basic sequence property.

We first state the complete range for unconditionality, which contains new negative cases and a multivariate extension of the examples in \cite{su}.

\begin{theorem} \label{th_unc}
Let $0<p,q<\infty$ and $s \in \bbR$. 
Then, $\sH_d $ is an unconditional basis of $F^s_{p,q}(\SR^d)$ if and only if
 the conditions \eqref{range1} and \eqref{q_range} are both satisfied.
\end{theorem}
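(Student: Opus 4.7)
The sufficiency of \eqref{range1} and \eqref{q_range} is already contained in Triebel's theorem \cite[Theorem 2.21]{triebel-bases}, so my plan is to concentrate on the necessity direction: given any admissible $(1/p,s)$ in the closed pentagon which violates \eqref{range1} or \eqref{q_range}, I must exhibit finite Haar expansions $f_N=\sum_n c_n^{(N)} u_n$ and sign choices $\eta_n^{(N)}\in\{\pm 1\}$ such that
\[
\frac{\|\sum_n \eta_n^{(N)} c_n^{(N)} u_n\|_{F^s_{p,q}}}{\|f_N\|_{F^s_{p,q}}}\longrightarrow\infty.
\]
I would split the argument by which boundary is being crossed.

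For violations of the $q$-condition \eqref{q_range}, the one-dimensional counterexamples of \cite{su,sudet} are the model. At the sharp boundary $s=1/q$, I would test with a sum $f_N=\sum_{k=1}^N 2^{-k(s-d/p)}\eta_k\, h^{\vec 1}_{k,\vec 0}$, whose $F^s_{p,q}$-norm reduces via the local means / atomic characterisation of \cite{Tr83,FrJa90} to an $\ell^q$-sum over the scale index $k$, while for appropriately chosen signs a randomised partial sum has norm growing at a strictly faster rate, yielding the divergence. Symmetric duality-based constructions handle the lower $q$-boundaries $s=d(1/q-1)$ and $s=1/q-1$. The lift from $d=1$ to $d\geq 2$ is then carried out by supporting the Haar functions on a single nested column of dyadic cubes and using the product structure of the Triebel-Lizorkin norm. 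For violations of the pentagon condition \eqref{range1}, I would instead work edge by edge: on $s=1$ and $s=1/p$ the Haar function $h^\ep_{k,\nu}$ itself is only marginally in $F^s_{p,q}$, while on $s=d(1/p-1)$ and $s=1/p-1$ it is the dual functional $u_n^*$ that is critical. In each case the plan is to take a lacunary sum of Haar functions supported on disjoint cubes at widely separated scales, so that the unsigned sum admits a cube-by-cube estimate while the randomised version triggers a Fefferman-Stein type square function whose $\ell^q$-norm diverges.

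The main obstacle I anticipate is the multivariate extension of the $q$-boundary counterexamples. In one dimension, the Triebel-Lizorkin space at a single dyadic level essentially coincides with an $\ell^q(L^p)$-expression on dyadic intervals, which makes the sharp norm comparison transparent. In dimension $d\geq 2$, the interplay between the $s$-scale factor $2^{ks}$ and the spatial $L^p$-normalisation of $d$-dimensional cubes forces a more delicate selection of supporting cubes; I expect the right construction to concentrate the Haar functions on a thin lower-dimensional slab rather than on a single cube, with the sharp norm estimate relying on the vector-valued maximal inequality built into the definition of $F^s_{p,q}$. Once this tensorised counterexample is in hand, the remaining edges should follow by relatively standard rescaling and duality arguments.
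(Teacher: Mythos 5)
Your sufficiency step (quoting Triebel) matches the paper, but the necessity plan has two genuine gaps. First, at the $q$-boundaries, which are the real core of the theorem, the proposed test function $\sum_{k\le N}2^{-k(s-d/p)}\eta_k h^{\vec 1}_{k,\vec 0}$ lives on a single nested column of cubes, and sign changes of Haar coefficients along such a column do not produce any norm growth: the jumps created at the (geometrically separated) points $2^{-k-1}$ contribute essentially independently, and the randomized sum obeys the same upper bound as $f_N$, so no divergence of the ratio can be extracted. The known obstructions at $s=1/q$ and $s=\max\{d(\frac1q-1),\frac1q-1\}$ require many Haar functions per scale spread over $\sim 2^{b_\kappa}$ positions, a Weierstrass/Rademacher randomization in the scale parameter, and a Khintchine-type argument to reach $\|P_{E_N}f_N\|\gtrsim N\|f_N\|$; this is exactly the construction of \cite[\S6]{su} which the paper verifies for $q\le 1$ and then extends to $d\ge 2$ (by tensorization when $q>1$, and by a genuinely $d$-dimensional analogue of the functions in \cite[\S6]{su} when $q\le 1$). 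Your fallback of "symmetric duality-based constructions" for the lower $q$-line is also not available in the range $p,q\le 1$, which is precisely where new work is needed; the hard case is already $d=1$, not the multivariate lift you single out as the main obstacle.

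Second, your diagnosis of the pentagon edges is wrong on two of them, and you never address density of $\Span\sH_d$, which is part of being a basis. On the line $s=1$ with $d/(d+1)\le p<1$ the Haar functions do belong to $F^1_{p,q}$ (since $1<1/p$); the reason the (unconditional or Schauder) basis property fails there is that $\Span\sH_d$ is not dense in $F^1_{p,q}$ (Proposition \ref{fdensity}), an obstruction that no sign-randomization blow-up can replace, and indeed for $q\le 2$ the relevant partial-sum operators are uniformly bounded, so your proposed "lacunary sum plus square function divergence" has nothing to latch onto. On the line $s=d(\frac1p-1)$ with $d/(d+1)<p\le 1$ the dual functionals $u_n^*$ are in fact bounded (this is case (b) of Lemma \ref{L_e1}/\ref{L_e2}, and $\sH_d$ is even a Schauder basis there by Theorem \ref{th1}(iii)), so "the dual functional is critical" is false on that edge; ruling out unconditionality there requires a genuine counterexample, which the paper imports from \cite[Theorem 13.1]{gsu-endpt}. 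Only the edges $s=1/p$ and $s=\frac1p-1$ (with $p>1$) are the trivial ones you describe. As written, the proposal therefore does not yield the necessity of either \eqref{range1} or \eqref{q_range}.
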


In the next results we drop unconditionality, and consider the Schauder basis property
for the following natural orderings of the Haar system $\sH_d$; see \cite{gsu, gsu-endpt}.

 \smallskip
 
\begin{definition} \label{strongly-adm}
  {\em (i)} An enumeration $\cU$  is  said to be {\it admissible} if for some constant $b\in \bbN$
the following holds: for each cube $I_{\nu} = \nu+[0,1]^d$, $\nu\in \bbZ^d$, if $u_n$ and $u_{n'}$ are both supported in $I_\nu$ and $|\supp(u_n)|\geq 2^{bd}|\supp(u_{n'})|$, then  necessarily $n<n'$\,.

{\em (ii)} 
 { An enumeration $\cU$ is {\it strongly admissible} if for some constant $b\in \bbN$ the following holds:
 for each cube $I_\nu$, $\nu\in\SZ^d$, if $I^{**}_\nu$ denotes the five-fold dilated cube with respect to its center, and if $u_n$ and $u_{n'}$  are supported in  $I^{**}_\nu$ with $|\supp (u_n)| \ge 2^{bd}|\supp( u_{n'}) |$ then necessarily $n< n'$.}

\end{definition}

Our next theorem characterizes the Schauder basis property in $F^s_{p,q}$ for the class of strongly admissible enumerations of $\sH_d$.
A new positive result is obtained in the line $s=d/p-d$, when $\frac{d}{d+1}<p\leq 1$; see Figure \ref{fig1}. 
The special case $F^0_{1,2}=h^1$ is classical, and was established in \cite{billard, woj1}. The negative results for $s=1$ are also new. 

   \begin{theorem} \label{th1} Let $0<p,q\leq \infty$ and $s \in \bbR$. Then, the following statements are equivalent, i.e. (a)$\iff$(b):
  
 \sline  (a) 
Every  strongly admissible  enumeration $\cU$ of $\sH_d $ is a  Schauder basis of $F^{s}_{p,q}(\bbR^d)$.

\bline (b) One  of the following three  conditions is satisfied:

\Benu \item[(i)\;\;] $\quad 1< p< \infty$, $\quad\; \frac 1p -1< s< \frac1p$, $\quad 0<q< \infty$,

\item[(ii)\;]  $\quad \frac{d}{d+1} < p\leq   1$, $\quad \frac dp-d<s<1$, $\quad 0<q< \infty$,

\item[(iii)] $\quad \frac{d}{d+1} < p\le 1$, $\quad s=\frac dp-d$, $\quad 0<q< \infty$.
\Eenu
\end{theorem}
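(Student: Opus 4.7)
My plan is to split the proof into the sufficient direction (b)$\Rightarrow$(a) and the necessary direction (a)$\Rightarrow$(b). The sufficiency in cases (i) and (ii) -- which together exhaust the open pentagon $\fP$ -- is already established in our earlier work \cite{gsu}, so the principal new positive content is case (iii): the boundary segment $s=d/p-d$ for $d/(d+1)<p\leq 1$, $0<q<\infty$. The classical Billard--Wojtaszczyk result on the Schauder basis property of $\sH_1$ in $h^1(\bbR)=F^0_{1,2}(\bbR)$ \cite{billard,woj1} is the well-known special case $p=1$, $q=2$, $d=1$ and serves as a model.

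For the sufficiency in (iii), the plan is to establish the uniform bound $\sup_R \|S_R^\cU\|_{F^{d/p-d}_{p,q}\to F^{d/p-d}_{p,q}}<\infty$ for every strongly admissible enumeration $\cU$. The key tool is the atomic (or molecular) characterization of the Hardy-type space $F^{d/p-d}_{p,q}$, available in the regime $d/(d+1)<p\leq 1$. Given a decomposition $f=\sum_j \lambda_j a_j$ into atoms $a_j$ supported in cubes $Q_j$, I reduce the question to showing that each $S_R^\cU a_j$ is a uniform molecule, with constants independent of $R$, $j$, and $\cU$. The strong admissibility property plays the central role: inside any cube $I_\nu^{**}$, the enumeration lists the Haar functions supported there in decreasing order of support size, up to a fixed bounded fluctuation controlled by $2^{bd}$. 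Consequently, the restriction of $S_R^\cU$ to Haar functions supported in $I_\nu^{**}$ can be written as a finite chain of localized dyadic conditional-expectation and Haar-difference operators, whose action on an atom can be controlled. Aggregating the resulting molecules across cubes $I_\nu$, $\nu\in\SZ^d$, via the natural $\ell^p$ structure of the atomic quasi-norm then yields the desired bound.

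For the necessary direction, any $(1/p,s)$ outside the closed pentagon $\overline{\fP}$, as well as any case with $p=\infty$ or $q=\infty$, is immediately ruled out because either \eqref{BBdual} fails, $\Span\sH_d$ is not dense in $F^s_{p,q}$, or the space is not separable. On the boundary of $\fP$, the segments remaining outside of (i), (ii), (iii) are: (A) the top edge $s=1$ with $1\leq 1/p\leq (d+1)/d$; (B) the upper-right edge $s=1/p$ with $1\leq p<\infty$; and (C) the bottom-right edge $s=1/p-1$ with $1<p<\infty$. For (A) and (B), I would construct sequences of test functions on $[0,1]^d$ whose truncated Haar expansions have $F^s_{p,q}$-quasi-norm diverging, adapting the one-dimensional constructions of \cite{su,sudet} to the multivariate $F$-space setting; the structural obstruction is that finite Haar sums are piecewise constant and cannot carry either the full smoothness $s=1$ (in (A)) or the critical integrability at $s=1/p$ (in (B)). For (C), I would invoke duality: a Schauder basis of $F^{1/p-1}_{p,q}$ would, via the reverse enumeration, induce a Schauder basis of the dual $F^{1-1/p}_{p',q'}$, which falls under (A) or (B) and is already excluded.

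The main obstacle is the sufficiency in case (iii). Showing that the image $S_R^\cU a_j$ retains simultaneously the spatial localization near $Q_j^{**}$ and enough Haar cancellation to lie uniformly in $F^{d/p-d}_{p,q}$ requires a two-parameter control: the scale ordering supplied by strong admissibility, and the spatial localization to thickened cubes $I_\nu^{**}$. The interplay with the $\ell^q$-aggregation inside the $L^p$ quasi-norm (for $q\neq 2$) is the delicate point that distinguishes this argument from the $q=2$ classical Hardy-space case, as well as from the analog for Besov spaces treated in \cite{gsu-endpt}, where $L^p(\ell^q)$ is replaced by the more forgiving $\ell^q(L^p)$.
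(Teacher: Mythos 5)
Your global skeleton (sufficiency via a uniform bound on the partial sums, necessity via excluding the boundary segments) is reasonable, but both directions contain genuine gaps. On the necessity side, your treatment of the top edge $s=1$ cannot work as proposed. For $d/(d+1)\le p\le 1$, $s=1$ and $0<q\le 2$ the operators $\SE_N$, and hence the natural partial sums $S^\cU_R$, \emph{are} uniformly bounded on $F^1_{p,q}$ (this is part (ii) of Theorem \ref{th3}, proved via Theorem \ref{expthm}), so no sequence of test functions with divergent truncated Haar expansions exists in that regime; the true obstruction there is the failure of density of $\Span\sH_d$ in $F^1_{p,q}$, i.e.\ $\liminf_N\|\SE_Nf-f\|_{F^1_{p,q}}>0$ for a suitable Schwartz function (Proposition \ref{fdensity}), which your outline never invokes. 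Only for $q>2$ does the unboundedness mechanism apply (Theorem \ref{schauderF}, transferred to $S^\cU_R$ via the special enumeration of Lemma \ref{L_SRm}). Your duality argument for the edge $s=1/p-1$, $p>1$, is also shaky: the coordinate functionals of a Schauder basis form a basis of the dual only under extra hypotheses (shrinking), and for $q<1$ the dual is not $F^{1-1/p}_{p',q'}$; in fact this edge is trivial because $\bbone_I$ fails to be a bounded functional on $F^{1/p-1}_{p,q}$ (Lemma \ref{L_e1}), i.e.\ \eqref{BBdual} fails, so no construction is needed.

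On the sufficiency side, case (iii) via atomic/molecular decomposition is exactly where I expect the argument to break. The image $S^\cU_R a$ of a smooth atom is a finite Haar sum, hence merely piecewise constant, and at smoothness $s=d/p-d\in(0,1)$ smooth molecules for $F^s_{p,q}$ require a H\"older condition of order exceeding $s$; the available non-smooth (Haar-type) atom theory requires $s>\max\{d(\tfrac1p-1),d(\tfrac1q-1)\}$, which fails identically in $p$ at the endpoint $s=d(\tfrac1p-1)$ and fails badly when $q<p$ — yet the theorem claims the full range $0<q<\infty$. You flag the $L^p(\ell^q)$ aggregation as the delicate point but offer no mechanism to overcome it. The paper instead localizes $S^\cU_R$ to the operators $\SE_{N}$ and $T_{N}[\cdot,\fa]$ (Lemma \ref{basicdec} and Proposition \ref{8.2}) and proves for these the refined endpoint estimate $\sup_N\|\SE_Nf-\varPi_Nf\|_{B^{d/p-d}_{p,r}}\lc\|f\|_{F^{d/p-d}_{p,\infty}}$ (Theorem \ref{expthm:d/p-d}), using local means, Peetre maximal functions and the geometry of boundary cubes (the disjoint cubes $\om(J)$ in Proposition \ref{Finftyprop}); it is the $F^s_{p,\infty}$ norm on the right-hand side, not accessible by generic molecule estimates, that yields all $q<\infty$ by embedding. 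Unless you can produce a genuinely new molecular argument at critical smoothness valid for $q<p$, this step of your plan does not go through.
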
 

\

As in \cite{gsu, gsu-endpt}, a crucial tool in our analysis will be played by the \emph{dyadic averaging operators} $\bbE_N$. 
That is, if $\sD_N$ is the set of all dyadic cubes of length $2^{-N}$,
 \[I_{N,\nu }=2^{-N}(\nu +[0,1)^d),\quad \nu \in\SZ^d,\] 
then we define 
\Be\label{expect}\bbE_N f(x)=\sum_{\nu \in \bbZ^d} \bbone_{I_{N,\nu }}(x)\,2^{Nd} \,\int_{I_{N,\nu }}f(y) dy\,,\Ee
at least for $f\in\cS$. We shall also need the following companion operators involving Haar functions of a fixed frequency level.
Namely, for  $N\in \bbN$ and any $\fa=(a_{\nu,\e})_{\nu,\e}\in \ell^\infty (\bbZ^d\times \Upsilon)$ we set
\Be\label{TNadef}
T_N[f,\fa] =\sum_{\ep\in \Upsilon} \sum_{\nu\in \bbZ^d} a_{\nu,\ep} 2^{Nd} 
\inn{f}{h^{\ep}_{N,\nu}}h^{\ep}_{N,\nu} .
\Ee
For these operators one looks for estimates that  are uniform in $\|\fa\|_\infty\le 1$.

The relation between the partial sums $S_R^\cU$ and the operators $\bbE_N$ and $T_N[\cdot, \fa]$ 
is explained in \S\ref{localization} below; see also \cite{gsu, gsu-endpt}. In particular, their uniform boundedness in $F^s_{p,q}$ 
implies that \eqref{uniformSRbound} holds for all strongly admissible enumerations $\cU$. 
The optimal region for the uniform boundedness for $\SE_N$ and $T_N[\cdot,\fa]$ in $F^s_{p,q}$ is given in the next theorem, and depicted in Figure \ref{fig2} below.

\begin{theorem} \label{th3} Let $0<p\leq\infty$, $0<q\leq \infty$ and $s\in \bbR$.

(a) The operators $\SE_N$ admit an extension from $\cS$ into $F^s_{p,q}(\SR^d)$ such that
$$\sup_{N\ge 0} \|\bbE_N\|_{F^s_{p,q}\to F^s_{p,q}} <\infty$$ 
if and only if one of the following five conditions is satisfied:

\medskip

(i) $\;\quad 1<p\leq \infty$, $\quad -1+\frac 1p< s< \frac1p$,  $\quad 0<q\le \infty$,

(ii)  $\quad \frac{d}{d+1} \leq p<1$, $\quad s=1$, $\quad 0<q\le 2$,

(iii)  $\!\quad \frac{d}{d+1} < p\leq 1$, $\quad d(\frac 1p-1)<s<1$, $\quad 0<q\le \infty$,

(iv) $\quad \frac{d}{d+1} <  p\leq 1$, $\quad s=d(\frac 1p-1)$, $\quad 0<q\le \infty$,

(v) $\quad p=\infty$, $\quad s=0$, $\quad 0<q\le \infty$.

\smallskip

\noi (b) If one of the conditions (i)-(v) is satisfied then the operators $T_N[\cdot,\fa]$ are uniformly bounded on $F^s_{p,q}(\bbR^d)$ when $N\geq0$ and $\|\fa\|_{\ell^\infty(\bbZ^d\times\Upsilon)}\le 1$.

\end{theorem}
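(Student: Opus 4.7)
The plan is to establish part (a) first, by proving sufficiency in each of the five subregions (i)--(v) and then necessity in their complement, and to deduce part (b) as a refinement of the sufficiency arguments. A key organizing identity is
\[
\SE_{N+1}f-\SE_Nf \;=\; T_N[f,\mathbf{1}],\qquad\text{with } a_{\nu,\e}\equiv 1,
\]
which telescopes $\SE_N=\SE_0+\sum_{k<N}T_k[\,\cdot,\mathbf{1}]$ and clarifies that (b) is essentially the stronger statement.

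\textbf{Sufficiency in (a), non-endpoint cases.} In the interior range (i) with $1<p\le\infty$, uniform $L^p$-boundedness of $\SE_N$ combined with a Littlewood--Paley or local-means characterization of $F^s_{p,q}$ gives the bound, after interpolation with dual estimates at the corners $s\to 1/p$ and $s\to 1/p-1$; the case (v) at $p=\infty,\;s=0$ reduces to the classical $\text{bmo}$-bound for the conditional expectation. For the $p\le 1$ subregions (iii) and (iv) we use the atomic decomposition of $F^s_{p,q}$: suitably normalized Haar functions $h^\e_{N,\nu}$, $\e\in\Upsilon$, are admissible atoms whenever $s<1$, and the dual coefficients $2^{Nd}\langle f, h^\e_{N,\nu}\rangle$ are dominated by the Peetre maximal function of $f$ on $I_{N,\nu}$; this yields a uniform atomic representation of $\SE_N f$. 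On the line $s=d(\tfrac1p-1)$ of case (iv), the mean-zero cancellation of $h^\e_{N,\nu}$ is exactly what is needed to meet the sharpened moment condition in the atomic theorem.

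\textbf{The endpoint case (ii), $s=1$ with $p<1$ and $q\le 2$, is the expected main obstacle.} Here one must control the Triebel--Lizorkin quasinorm of a piecewise constant whose distributional derivative is concentrated on the dyadic grid. The Littlewood--Paley pieces $\psi_k\ast\SE_Nf$ telescope for $k\gg N$ and are localised near the jump set otherwise; a Khintchine-type square-function estimate then dominates the $\ell^q$-sum of these pieces by an $\ell^2$-sum, giving the bound precisely for $q\le 2$. Sharpness of $q\le 2$ is confirmed in the necessity argument by testing against a randomly signed Haar sum $f=\sum_{k<N,\nu} r_{k,\nu}h^\e_{k,\nu}$ and exploiting the reverse Khintchine comparison between the $\ell^q$- and $\ell^2$-norms. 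The other necessity assertions outside (i)--(v) follow from standard tests---smooth bumps for the ceiling $s<1/p$ or $s\le 1$ (where the many dyadic jumps of $\SE_N f$ produce $F^s_{p,q}$-norms growing with $N$), and individual Haar functions for the floor $s\ge d(\tfrac1p-1)$ (via the local integrability constraint on $F^s_{p,q}$).

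\textbf{Part (b).} The sufficiency arguments for (a) adapt immediately with coefficients $a_{\nu,\e}$ in place of $1$: each factor $a_{\nu,\e}$ reweights a single atom in the atomic and Peetre-maximal estimates by a constant bounded by $\|\fa\|_{\ell^\infty}$, so all bounds are preserved with the same dependence on $s,p,q,d$. Consequently $T_N[\,\cdot,\fa]$ is uniformly bounded on $F^s_{p,q}(\bbR^d)$ whenever any of the conditions (i)--(v) holds, establishing part (b).
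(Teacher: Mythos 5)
Your plan fails at the two places where the theorem's genuinely new content lies. Most seriously, the necessity of $q\le 2$ in case (ii) cannot be obtained by ``testing against a randomly signed Haar sum'': $\SE_N$ is the conditional expectation onto functions constant on cubes of $\sD_N$, so it acts as the \emph{identity} on every Haar polynomial built from $h^{\vec 0}_{0,\nu}$ and $h^{\e}_{k,\nu}$ with $k<N$, and it \emph{annihilates} all Haar functions of frequency $2^k$, $k\ge N$. Hence randomly signed Haar sums give operator-norm ratio at most $1$, and no reverse Khintchine comparison can produce the required growth. The paper's proof of this sharpness (Theorem \ref{th_NC_s1}) instead tests on randomized lacunary Weierstrass-type functions $f_N(x,t)=\sum_{j\in\fZ_N}2^{-j}r_j(t)e^{2\pi i 2^jx}\psi(x)$, which lie outside the Haar span, and extracts the lower bound $N^{1/2-1/q}$ from the precise behaviour of $\beta_N*(\SE_N^\perp\psi_j)$ near the dyadic grid (Lemma \ref{BN}) together with Khintchine in the randomization parameter, plus a tensor argument for $d\ge 2$. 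Without some such non-Haar test family your sharpness claim is unsupported.

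The sufficiency argument is also gapped. Haar functions are not admissible atoms for $F^s_{p,q}$ with $s>0$ in the classical smooth atomic theory (they are not even continuous, while derivatives of order $[s]+1$ are required), and an argument of the form ``coefficients dominated by maximal functions, then atomic synthesis'' ignores the projection structure of $\SE_N$: applied verbatim it would bound arbitrary bounded Haar coefficient multipliers, contradicting the failure of unconditionality outside \eqref{q_range} (Theorem \ref{th_unc}) — note that case (iv) allows \emph{all} $0<q\le\infty$. The telescoping $\SE_N=\SE_0+\sum_{k<N}T_k[\cdot,\mathbf 1]$ only yields an $O(N)$ bound from part (b), so (b) is not ``the stronger statement''. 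The paper instead writes $\SE_N-\varPi_N=\SE_N\varPi_N^\perp-\SE_N^\perp\varPi_N$ as in \eqref{EN-PiN} and estimates $L_k\SE_N L_j\La_j$ and $L_k\SE_N^\perp\varPi_N$ via the pointwise gradient bound of Lemma \ref{ENperpPiN} and the geometry of the sets $\cU_{N,k}$; at $s=1$ the index $2$ enters through $\|\nabla f\|_{h^p}\lesssim\|f\|_{F^1_{p,2}}$ and the input-side embedding $F^1_{p,q}\subset F^1_{p,2}$ for $q\le 2$, with the output measured in $B^1_{p,r}$, $r\le\min(p,q)$ — not through a Khintchine estimate on the output pieces. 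Likewise case (v) for general $q$ does not reduce to the bmo bound ($q=2$); the paper proves $\|\SE_Nf-\varPi_Nf\|_{F^0_{\infty,r}}\lesssim\|f\|_{B^0_{\infty,\infty}}$ for every $r>0$. Interpolation ``towards the corners'' in (i) cannot work since the corner estimates fail, and for $p=\infty$ or $q=\infty$ even the definition of the extension (convergence of $\sum_j\SE_N(L_j\La_jf)$, Lemma \ref{L_App1}) requires an argument you omit; finally, the floor necessity ($s\ge\max\{d(\tfrac1p-1),\tfrac1p-1\}$, with $p\le1$ at equality) needs the specific odd test-function families of Lemma \ref{L_e1}, not individual Haar functions.
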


\begin{figure}[h]\label{uniformbdnessfigure}
 \centering
{\begin{tikzpicture}[scale=2]
\draw[->] (0.03,0.0) -- (2.1,0.0) node[right] {$\frac{1}{p}$};
\draw[->] (0.0,0.03) -- (0.0,1.1) node[above] {$s$};

\draw (1.0,0.03) -- (1.0,-0.03) node [below] {$1$};
\draw (1.5,0.03) -- (1.5,-0.03) node [below] {{\tiny $\;\;\frac{d+1}{d}$}};
\draw (0.03,1.0) -- (-0.03,1.00) node [left] {$1$};

\draw[dotted] (1.0,0.0) -- (1.0,1.0);

\draw[line width=1.1] (0.0,-0.03)--(0.0,-0.97);

\fill (0,0) circle (1pt);
\draw[dashed, thick] (0.02,0.02) -- (0.98,0.98) ;

\draw[thick] (1.03,1.0) -- (1.5, 1.0);
\draw[thick] (1.48,0.96) -- (1.0,0.0);
\fill (1,0) circle (1pt);
\fill (1.5,1) circle (1pt);
\draw (1,1) circle (1pt);
\draw[<-, thin] (1.35,0.6)--(1.45,0.6);
\node [right] at (1.4,0.6) {{\footnotesize $0<q\leq \infty$}};

\draw[<-, thin] (1.25,1.05)--(1.25,1.15)--(1.5,1.15);
\node [ right] at (1.5,1.15) {{\footnotesize $ {0<q\leq 2}$}};

\draw[dashed, thick] (0.02,-0.97) -- (1.0,0.0);
\draw (0,-1) circle (1pt) node [left] {$-1$};

\node [right] at (-0.04,-0.2) {{\tiny $0<q\leq \infty$}};
\draw[->, thin] (0.3,-0.28)--(0.3,-0.35)--(0.05,-0.35);

\end{tikzpicture}
}
\caption{Region for uniform boundedness of $\SE_N$  (hence for the \emph{basic sequence} property) 
in the spaces $F^s_{p,q}(\SR^d)$.}
\label{fig2}
\end{figure}
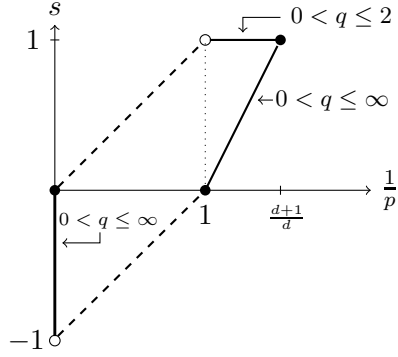

Regarding positive results, the cases (i) and (iii) were established in \cite{gsu}. 
The novel cases appear at the end-point lines in (ii) and (iv), and the special point (v); see Figure \ref{fig2}.

The proof of (ii) will follow from a slightly stronger result which we state next. 
Let  $\eta_0\in C^\infty_c(\SR^d)$  be supported in $\{|\xi|<3/4\}$ with
$\eta_0(\xi)=1$ for $|\xi|\leq 1/4$, and
let $\varPi_N$ be defined by
\Be
\widehat{ \varPi_N f}(\xi)= \eta_0(2^{-N}\xi)\widehat f(\xi).
\label{pinf}
\Ee
 Then we shall
actually prove the following. 

\begin{theorem}\label{expthm} Let ${d}/{(d+1)}\le p<1$ and $0<r\leq\infty$.
Then
\Be \label{TL2bd}
\sup_N    \|\bbE_Nf -\varPi_Nf 
\|_{B^1_{p,r}} \lc  \|f\|_{F^1_{p,2}} .  
\Ee
Moreover, for $\|\fa\|_{\ell^\infty}\leq 1$,
\Be \label{TL1bda}
\sup_N    \|T_N[f,\fa]
\|_{B^1_{p,r}} \lc \|f\|_{F^1_{p,2}}.
\Ee
\end{theorem}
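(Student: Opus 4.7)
I would approach this via a Littlewood--Paley decomposition on the output and a frequency-localized analysis of the Haar projections. Since $\|g\|_{B^1_{p,r}} \approx \bigl(\sum_j 2^{jr}\|\varphi_j * g\|_p^r\bigr)^{1/r}$, the core estimate to aim for is
\begin{equation*}
\|\varphi_j * T_N[f,\fa]\|_p \,\lesssim\, 2^{-j}\, 2^{-\delta|j-N|}\, \|f\|_{F^1_{p,2}}
\end{equation*}
uniformly in $j, N \ge 0$ and $\|\fa\|_{\ell^\infty} \le 1$, for some $\delta = \delta(p,d) > 0$. Summing this geometric series in $j$ yields \eqref{TL1bda} for every $r \in (0,\infty]$ simultaneously. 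For \eqref{TL2bd}, I would write $\bbE_N - \bbE_0 = \sum_{k<N} T_k[\cdot, \mathbf{1}]$ where $\mathbf{1}$ denotes the constant-one coefficient sequence; applying the above per-frequency bound at each level $k$, the geometric decay in $|j-k|$ keeps the sum uniformly bounded, while the error $(\bbE_0 - \varPi_N)f$ is controlled via standard Fourier-multiplier considerations (both $\bbE_0$ and $\varPi_N$ act as low-frequency cut-offs at unit relative scale, modulo a smoothing).

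The decay factor $2^{-\delta |j-N|}$ comes from a case analysis of $\varphi_j * h^\ep_{N,\nu}$ in two regimes. When $j < N$, the vanishing moment $\hat h^\ep(0) = 0$ (from $\ep \in \Upsilon$) combined with the smoothness of $\varphi_j$ gives, via Taylor expansion of $\hat h^\ep$ near the origin,
\begin{equation*}
\|\varphi_j * h^\ep_{N,\nu}\|_p \,\lesssim\, 2^{-jd/p}\, 2^{-(N-j)\delta},\qquad \delta = (d+1) - d/p > 0,
\end{equation*}
where the positivity of $\delta$ corresponds exactly to the condition $p > d/(d+1)$. When $j > N$, up to rapidly decaying tails of $\varphi_j$, the convolution $\varphi_j * h^\ep_{N,\nu}$ is localized in an $O(2^{-j})$-tubular neighborhood of the codimension-one jump hyperplanes of $h^\ep_{N,\nu}$ inside $I_{N,\nu}$, yielding $L^p$-decay proportional to the shrinking measure of this neighborhood. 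To convert the single-function estimates into bounds on $T_N[f, \fa]$, I would control the coefficients $c^\ep_{N,\nu} = 2^{Nd}\langle f, h^\ep_{N,\nu}\rangle$ by integration by parts: writing $h^\ep_{N,\nu} = \partial_i T^\ep_{N,\nu}$ for an index $i$ with $\ep_i = 1$, where $T^\ep_{N,\nu}$ is a tent-shaped primitive with $\|T^\ep_{N,\nu}\|_\infty \lesssim 2^{-N}$, gives $|c^\ep_{N,\nu}| \lesssim 2^N \int_{I_{N,\nu}} |\partial_i f|$.

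The main obstacle is then passing from these single-scale dyadic averages of $\partial f$ to the continuous Littlewood--Paley square function that defines $\|f\|_{F^1_{p,2}}$. For $p<1$ this bridging step requires a vector-valued maximal inequality of Fefferman--Stein type, which is valid exactly when $p > d/(d+1)$; this is what forces the range restriction in the hypothesis. The critical endpoint $p = d/(d+1)$ is the most delicate point and would be handled either by an atomic or molecular decomposition adapted to the Hardy--Sobolev structure of $F^1_{p,2}$, or by a careful limiting argument from the strict-inequality range combined with a quantitative estimate in $p$.
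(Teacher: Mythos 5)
Your treatment of \eqref{TL1bda} has the right skeleton (a gain $2^{-\delta|j-N|}$ pinned to the single scale $N$, then an $\ell^r$ sum over $j$), but the reduction you propose for \eqref{TL2bd} has a genuine structural gap. Writing $\bbE_N-\varPi_N=(\bbE_N-\bbE_0)+(\bbE_0-\varPi_N)$ with $\bbE_N-\bbE_0=\sum_{k<N}T_k[\cdot,\mathbf 1]$, and estimating each level only by the full norm $\|f\|_{F^1_{p,2}}$, cannot give a bound uniform in $N$ in $B^1_{p,r}$ for finite $r$: from your per-frequency bound $\|\varphi_j*T_k[f,\mathbf 1]\|_p\lc 2^{-j}2^{-\delta|j-k|}\|f\|_{F^1_{p,2}}$ one gets, after summing over $k<N$, only $2^{j}\|\varphi_j*(\bbE_N-\bbE_0)f\|_p\lc\|f\|_{F^1_{p,2}}$ for every $j\le N$, with no decay in $j$; the $\ell^r$ sum over $j\in\{0,\dots,N\}$ then costs $N^{1/r}$, so this route only recovers the case $r=\infty$. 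Worse, the error term $(\bbE_0-\varPi_N)f$ contains $\varPi_N^\perp f$, and since $F^1_{p,2}\not\subset B^1_{p,r}$ for $r<2$, the quantity $\|\varPi_N^\perp f\|_{B^1_{p,r}}$ is not uniformly controlled by $\|f\|_{F^1_{p,2}}$ (nor is $\bbE_0$ a Fourier multiplier), so no ``standard Fourier-multiplier considerations'' can close this term. The whole point of the theorem is that the difference $\bbE_Nf-\varPi_Nf$ is better than either piece in isolation; the paper therefore keeps exactly the combinations that carry cancellation, $\bbE_N-\varPi_N=\bbE_N\varPi_N^\perp-\bbE_N^\perp\varPi_N$ as in \eqref{EN-PiN}, and proves geometric decay in $|k-N|$ for $2^k\|L_k\bbE_N^\perp\varPi_Nf\|_p$ via the pointwise gain $|\bbE_N^\perp[\varPi_Nf]|\lc\fM^{**}_{A,N}(2^{-N}\varPi_N\nabla f)$ (Lemma \ref{ENperpPiN}), with right-hand sides $\|\nabla f\|_{h^p}$ and $\|f\|_{B^1_{p,\infty}}$ (Propositions \ref{j<Nprop} and \ref{Ps1}) that are in turn dominated by $\|f\|_{F^1_{p,2}}$. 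It is this decay relative to the single scale $N$, not a level-by-level summation, that makes the $\ell^r$ sum over output frequencies uniform in $N$.

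Two further points need repair even for \eqref{TL1bda}. First, the coefficient bound $|c^\ep_{N,\nu}|\lc 2^N\int_{I_{N,\nu}}|\partial_i f|$ presupposes $\nabla f\in L^1_{\mathrm{loc}}$, whereas for $p<1$ one only has $\nabla f\in h^p=F^0_{p,2}$, which need not be locally integrable; the paper sidesteps this by estimating the smooth object $\bbE_N^\perp\varPi_Nf$ and controlling $\|\nabla\varPi_Nf\|_p$ through Peetre maximal functions and $\|\sup_N|\varPi_N\nabla f|\|_p\lc\|\nabla f\|_{h^p}\lc\|f\|_{F^1_{p,2}}$. Second, the restriction $p\ge d/(d+1)$ is not the threshold of a Fefferman--Stein type maximal inequality (there is no such threshold); it arises from the geometry, namely the measure $\approx 2^{-(d-1)N}2^{-k}$ of the $2^{-k}$-neighborhoods of the dyadic interfaces against the $L^p$ quasi-norm, which produces factors such as $2^{-(k-N)(\frac1p-1)}$ and $2^{-(k-N)(d-\frac{d-1}p)}$. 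Finally, the endpoint $p=d/(d+1)$ cannot be reached by a limiting argument from the open range, precisely because the constants blow up there; the paper handles it by replacing Proposition \ref{Ps1} with Proposition \ref{Finftyprop}, valid down to $p>(d-1)/d$ but with the $F^1_{p,\infty}$ norm of $f$ on the right, and then using $F^1_{p,2}\hookrightarrow F^1_{p,\infty}$.
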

Using the embeddings
$F^s_{p,q} \subset F^s_{p,2}$ for $q\le 2$, and  $B^s_{p,r}\subset F^s_{p,r}\subset F^s_{p,q}$ for $r\leq\min\{p,q\}$,
one deduces the uniform bounds in (ii) of Theorem \ref{th3}.

Likewise, for the end-point cases in (iv) and (v)  
we shall establish the following stronger results.

\begin{theorem}\label{expthm:d/p-d} Let ${d}/{(d+1)}<p\le 1$, $0<r\leq\infty$, and $s=\frac dp -d$.
Then
\Be \label{BT-diff}
\sup_N    \|\bbE_Nf -\varPi_Nf 
\|_{B^s_{p,r}} \lc \|f\|_{F^s_{p,\infty}},
\Ee
and likewise for the operators $T_N[\cdot,\fa]$, 
uniformly in $\|\fa\|_{\ell^\infty}\leq 1$.
\end{theorem}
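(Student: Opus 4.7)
The strategy closely parallels that of Theorem \ref{expthm}, but with the Hardy-type source norm $\|f\|_{F^1_{p,2}}$ replaced by the maximal-type source norm $\|f\|_{F^s_{p,\infty}}$ at the endpoint $s=d/p-d$. I first treat \eqref{BT-diff}; the companion bound for $T_N[\cdot,\fa]$ follows from the same scheme, since $T_N[f,\fa]$ shares the key structural feature of being ``frequency-localized'' at scale $2^{-N}$ (note that for $\fa\equiv 1$ one has $T_N[f,1]=\bbE_{N+1}f-\bbE_Nf$), and the only property of $\fa$ used in the estimates is $\|\fa\|_\infty \le 1$.

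The main idea is to decompose $\bbE_Nf-\varPi_Nf$ via a smooth Littlewood--Paley partition $\{\pi_j\}_{j\ge0}$ and exploit the standard characterization
$$\|\bbE_Nf-\varPi_Nf\|_{B^s_{p,r}}^r \;\sim\; \sum_{j\ge0} 2^{jsr}\,\bigl\|\pi_j(\bbE_N-\varPi_N)f\bigr\|_p^r.$$
Three frequency regimes arise. In the \emph{high regime} $j\ge N+C$, the choice of $\eta_0$ forces $\pi_j\varPi_Nf=0$, so only $\|\pi_j\bbE_Nf\|_p$ remains; the jumps of the piecewise-constant function $\bbE_Nf$ combined with the cancellation of the LP kernel produce a geometric decay $\|\pi_j\bbE_Nf\|_p \lesssim 2^{-(j-N)\alpha}\,2^{-Ns}\|f\|_{F^s_{p,\infty}}$ for some $\alpha>s$. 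In the \emph{low regime} $j\le N-C$, instead $\pi_j\varPi_Nf=\pi_jf$, so $\pi_j(\bbE_N-\varPi_N)f = \pi_j(\bbE_N-I)f$; the smoothness of $\pi_jf$ at scale $2^{-j}\gg 2^{-N}$ combined with a commutator-type estimate gives a geometric decay in $N-j$. Finally, in the \emph{transition regime} $j\sim N$ one uses a direct pointwise bound (see below).

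The critical case $j\sim N$ rests on a Peetre-type pointwise estimate: since the kernel of $\bbE_N-\varPi_N$ has vanishing integral and is essentially concentrated at scale $2^{-N}$, it can be compared pointwise to a Peetre maximal function of $f$,
$$\bigl|(\bbE_N-\varPi_N)f(x)\bigr| \;\lesssim\; 2^{-Ns}\,\sup_{k\ge0} 2^{ks}\,\pi_k^{*,a}f(x),$$
for suitable $a>d/p$. Taking $L^p$-norms and invoking the maximal-function characterization $\|f\|_{F^s_{p,\infty}}\sim \bigl\|\sup_k 2^{ks}\pi_k^{*,a}f\bigr\|_p$ gives the required estimate in the transition range. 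Inserting the three regime bounds into the Besov sum above yields a geometric series in $|j-N|$, whose convergence is guaranteed by $\alpha>s$ and $\beta>0$. This proves \eqref{BT-diff} uniformly in $N$.

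The principal obstacle is the pointwise Peetre-type comparison in the transition regime: it must use simultaneously the mean-zero structure of $\bbE_N-\varPi_N$ and the precise endpoint identity $s=d/p-d$. At this smoothness, $L^p$-normalized scale-$2^{-N}$ atoms carry mass $\sim 2^{-Ns}$, which is exactly the scaling absorbed by the maximal characterization of $F^s_{p,\infty}$; this matching is what permits the source space to be enlarged from $F^s_{p,2}$ (as in Theorem \ref{expthm}) all the way to $F^s_{p,\infty}$. The book-keeping of the constants in the high- and low-frequency regimes, together with a density/limiting argument to justify the pointwise estimate for $f\in F^s_{p,\infty}$, must be carried out with some care to produce uniform bounds in $N$.
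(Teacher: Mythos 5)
Your outline (Littlewood--Paley decomposition of the output, three regimes in $|j-N|$, geometric summation) is a reasonable skeleton, but the two estimates that carry all the weight at the endpoint $s=\frac dp-d$ are asserted rather than proved, and the transition-regime estimate is false as stated. Concretely, the claimed pointwise bound $|(\bbE_N-\varPi_N)f(x)|\lc 2^{-Ns}\sup_k 2^{ks}\pi_k^{*,a}f(x)$ cannot hold with any admissible aperture $a>d/p$: take $f=\pi_{j}f$ a single bump of height $1$ concentrated in a $2^{-j}$-neighborhood of $\partial I$ for some $I\in\sD_N$ and $j\gg N$. Then at the center $x$ of $I$ one has $|\bbE_Nf(x)|\approx 2^{N-j}$ (and $\varPi_Nf$ is negligible), while $2^{-Ns}\sup_k 2^{ks}\pi_k^{*,a}f(x)\approx 2^{-(j-N)(a-s)}$ with $a-s>d\ge 1$, so the right-hand side is exponentially smaller than the left. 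Only an \emph{integrated} version of this comparison survives, because the offending contributions live on a boundary neighborhood of measure $\approx 2^{-j}2^{-N(d-1)}$ per cube; a pointwise Peetre domination is not the right tool here. The same issue undermines the high regime: the mechanism you invoke (jumps of $\bbE_Nf$ plus cancellation of the LP kernel) only yields the support localization to the boundary set $\cU_{N,j}$, and if you then estimate the size of $\bbE_Nf$ near $\partial I$ input-frequency by input-frequency (Nikolskii or Besov-style), the exponents at $s=\frac dp-d$ exactly cancel and the sum over input frequencies $j'>N$ diverges; a bound by $\|f\|_{B^s_{p,\infty}}$, let alone $\|f\|_{F^s_{p,\infty}}$, does not follow from the stated ingredients. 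Your low regime has the same hidden difficulty: the ``commutator'' argument handles the low input frequencies (this is essentially Proposition \ref{firsttwosums}, where a Besov bound suffices), but the dominant term for $k\le N$ is again the high-input-frequency part $\sum_{j>N}L_k\bbE_NL_j\La_jf$, cf.\ \eqref{jgeNkleN-ple1}.

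What is missing is the one idea that makes the theorem true with source space $F^s_{p,\infty}$: keep the sum over input frequencies $j>N$ \emph{inside} the $L^p$-norm and disjointify across scales. In the paper this is done by bounding $\|\La_jf\|_{L^\infty(J)}$ for the boundary cubes $J\in\sD_{j+1}[\partial I]$ by an average of a Peetre-type maximal function over the selected child $\om(J)$ (see \eqref{LinftyJ} and \eqref{ENjmax}), and then using that the cubes $\om(J)$ are pairwise disjoint as both $J$ and the scale $j$ vary (Lemma \ref{omegacubelemma}). This collapses $\sum_{j>N}\sum_{J\in\sD_{j+1}[\partial I]}$ into a single integral over $I^{**}$ of $G=\sup_{\ell>N}2^{\ell(\frac dp-d)}\fM^*_\ell(\La_\ell f)$, i.e.\ exactly an $F^s_{p,\infty}$-type quantity, which is then controlled by $\|f\|_{F^s_{p,\infty}}$ via Peetre and Hardy--Littlewood maximal inequalities \eqref{HLmaximal}; the measure of $\cU_{N,k}\cap I$ (for $k>N$), respectively the factor $2^{k-N}$ from the kernel difference (for $k\le N$), then supplies the geometric decay in $|k-N|$ that you wanted, with the correct $\|f\|_{F^s_{p,\infty}}$ on the right (Proposition \ref{Finftyprop}). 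Without this (or an equivalent) device, the endpoint identity $s=\frac dp-d$ leaves your high- and transition-regime sums uncontrolled, so the proposal as written has a genuine gap.
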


\begin{theorem}\label{th_s0} 
For every $r>0$, it holds
\Be \label{ENPiN_s0}
\sup_N    \|\bbE_Nf -\varPi_Nf 
\|_{F^0_{\infty,r}} \lc \|f\|_{B^0_{\infty,\infty}},
\Ee
and likewise for the operators $T_N[\cdot,\fa]$, 
uniformly in $\|\fa\|_{\ell^\infty}\leq 1$.
\end{theorem}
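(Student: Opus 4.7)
My plan is to first establish the estimate for the single-level operator $T_N[\cdot,\fa]$ and then deduce the bound for $\bbE_N f - \varPi_N f$ from it, mirroring the structure used for Theorems \ref{expthm} and \ref{expthm:d/p-d}. Writing
\[
\bbE_N f - \varPi_N f = (\bbE_0-\varPi_0)f + \sum_{k=0}^{N-1}\bigl((\bbE_{k+1}-\bbE_k) f - (\varPi_{k+1}-\varPi_k) f\bigr),
\]
and using the martingale identity $(\bbE_{k+1}-\bbE_k) f = T_k[f,\fa]$ with $a_{\nu,\ep}\equiv 1$, it suffices to prove single-level bounds for $T_k[\cdot,\fa]$ in $F^0_{\infty,r}$ by $\|f\|_{B^0_{\infty,\infty}}$, with sufficient decay in $|N-k|$ to allow summation; the smoothed pieces $\varPi_{k+1}-\varPi_k$ are easily handled since $\|(\varPi_{k+1}-\varPi_k) f\|_\infty \lesssim \|f\|_{B^0_{\infty,\infty}}$ together with the required decay from Fourier localization.

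The heart of the argument is the pointwise Haar-coefficient estimate
\[
\sup_{\nu\in\SZd,\,\ep\in\Upsilon}\; 2^{Nd}\,|\langle f, h^\ep_{N,\nu}\rangle| \;\lesssim\; \|f\|_{B^0_{\infty,\infty}}.
\]
Decomposing $f = \sum_{k\ge 0}\Delta_k f$ via a Littlewood--Paley resolution, one obtains for $k\le N$, via integration by parts against the first antiderivative of $h^{(1)}$ in a coordinate direction with $\ep_i=1$ and the Bernstein bound $\|\partial_i\Delta_k f\|_\infty \lesssim 2^k\|\Delta_k f\|_\infty$, the estimate
\[
2^{Nd}\,|\langle \Delta_k f, h^\ep_{N,\nu}\rangle| \;\lesssim\; 2^{k-N}\,\|\Delta_k f\|_\infty.
\]
For $k > N$ one argues on the Fourier side: since $|\widehat{h^\ep_{N,\nu}}(\xi)| \lesssim 2^{-Nd}\cdot 2^{N-k}$ on the annulus $|\xi|\sim 2^k$ where $\widehat{\Delta_k f}$ is supported (the factor $2^{N-k}$ coming from the coordinate with $\ep_i=1$), one obtains the complementary gain
\[
2^{Nd}\,|\langle \Delta_k f, h^\ep_{N,\nu}\rangle| \;\lesssim\; 2^{N-k}\,\|\Delta_k f\|_\infty.
\]
Summing both regimes and invoking $\sup_k\|\Delta_k f\|_\infty \le \|f\|_{B^0_{\infty,\infty}}$ yields the coefficient bound.

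Given this bound, $T_N[f,\fa]$ is a bounded function with $\|T_N[f,\fa]\|_\infty \lesssim \|f\|_{B^0_{\infty,\infty}}$, piecewise constant at scale $2^{-N-1}$. Standard smooth-tail estimates for the Fourier transform of dyadic step functions yield $\|\Delta_j T_N[f,\fa]\|_\infty \lesssim 2^{-|j-N|}\|T_N[f,\fa]\|_\infty$, so for each fixed $r>0$ the pointwise $\ell^r$-sum $\bigl(\sum_j |\Delta_j T_N[f,\fa](x)|^r\bigr)^{1/r}$ is dominated by a constant (depending on $r$) times $\|T_N[f,\fa]\|_\infty$. The characterization of $\|\cdot\|_{F^0_{\infty,r}}$ as a supremum of local averages of such pointwise sums then closes the argument.

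The most delicate point will be the high-frequency regime $k > N$ in the coefficient estimate. Since $B^0_{\infty,\infty}$ is strictly larger than $L^\infty$, a crude pointwise bound on $f$ is unavailable, and the required geometric gain $2^{-(k-N)}$ for each high-frequency piece must be extracted from the coordinate-wise Fourier decay of $\widehat{h^\ep_{N,\nu}}$, which is only available when $\ep\in\Upsilon$, i.e., when $h^\ep_{N,\nu}$ has at least one direction of mean zero. The precise bookkeeping of this decay and its interplay with a Bernstein-type $L^1$ bound on $\widehat{\Delta_k f}$ restricted to $|\xi|\sim 2^k$ constitutes the main technical effort.
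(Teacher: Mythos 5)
Your final step contains a genuine error, and it is exactly at the point where the theorem is delicate. The claimed estimate $\|\Delta_j T_N[f,\fa]\|_\infty \lesssim 2^{-|j-N|}\|T_N[f,\fa]\|_\infty$ fails for $j>N$: a function that is piecewise constant at scale $2^{-N}$ has jump discontinuities, and its Littlewood--Paley pieces at frequencies $2^j$, $j>N$, are of size comparable to the jumps (hence to $\|T_N[f,\fa]\|_\infty$) at points within distance $\approx 2^{-j}$ of the discontinuity hyperplanes; there is no sup-norm decay in $j-N$, only smallness of the set where the piece is large. Consequently the pointwise $\ell^r$-sum $\bigl(\sum_j|\Delta_j T_N[f,\fa](x)|^r\bigr)^{1/r}$ is not dominated by $\|T_N[f,\fa]\|_\infty$ (it blows up logarithmically near the jump set), and what your argument would actually yield is a bound in $B^0_{\infty,r}$, which is false for $r<\infty$ --- the paper explicitly remarks that the analogues of \eqref{36a}--\eqref{37a} with $\sum_{k>N}$ hold only for $r=\infty$, and that this is precisely why the conclusion must be stated in $F^0_{\infty,r}$. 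The correct mechanism, which is how the paper argues, is geometric: $L_k(\bbE_N g)$ (and likewise the high-frequency pieces of $T_N[f,\fa]$) vanish off the boundary set $\cU_{N,k}$ by \eqref{gsu_L1}, and for $I\in\sD_\ell$ with $\ell\ge N$ one has $|I\cap\cU_{N,k}|\approx 2^{-(d-1)\ell}2^{-k}$, so the \emph{local averages} in the definition \eqref{Finfty} of the $F^0_{\infty,r}$-norm produce the convergent factor $\sum_{k\ge\ell}2^{\ell-k}$. Your coefficient bound $\sup_{\nu,\ep}2^{Nd}|\langle f,h^\ep_{N,\nu}\rangle|\lesssim\|f\|_{B^0_{\infty,\infty}}$ is true and useful, but it must be combined with this measure-counting argument, not with a pointwise bound. (Also, your Fourier-side derivation of the high-frequency coefficient bound is shaky: for $f\in B^0_{\infty,\infty}$ there is no $L^1$ control of $\widehat{\Delta_k f}$ on the annulus; one should instead write $\Delta_k f=2^{-k}\,\mathrm{div}\,g_k$ with $\|g_k\|_\infty\lesssim\|\Delta_k f\|_\infty$ and use that $h^\ep_{N,\nu}$ has total variation $\approx 2^{-N(d-1)}$.)

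The reduction in your first step is also not viable as stated. In the telescoping $\bbE_Nf-\varPi_Nf=(\bbE_0-\varPi_0)f+\sum_{k<N}\bigl((\bbE_{k+1}-\bbE_k)f-(\varPi_{k+1}-\varPi_k)f\bigr)$ there is no source of decay in $|N-k|$: each summand is an operator independent of $N$ whose $F^0_{\infty,r}$-norm applied to a generic $f$ with $\|f\|_{B^0_{\infty,\infty}}=1$ is of unit size, so the triangle inequality only gives $O(N)$. Note that neither $\|\bbE_Nf\|_{F^0_{\infty,r}}$ nor $\|\varPi_Nf\|_{F^0_{\infty,r}}$ is individually $O(1)$ for $f\in B^0_{\infty,\infty}$; the theorem rests on cancellation between the two, which the paper captures by the splitting \eqref{EN-PiN}, $\bbE_N-\varPi_N=\bbE_N\varPi_N^\perp-\bbE_N^\perp\varPi_N$, together with the pointwise maximal-function estimates (Lemma \ref{ENperpPiN} and \eqref{ENjmax}) to get uniform $L^\infty$ bounds on $L_k\bbE_N^\perp\varPi_Nf$ and $L_k\bbE_N\varPi_N^\perp f$, the summable bounds \eqref{36a}--\eqref{37a} for $k\le N$, and the $\cU_{N,k}$ measure estimate for $k>N$. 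To repair your proof you would need both to abandon the level-by-level summation (working instead with $\bbE_N\varPi_N^\perp$ and $\bbE_N^\perp\varPi_N$, or otherwise keeping the $N$-scale cancellation intact) and to replace the false sup-norm decay by the local-average argument; at that point you are essentially reproducing the paper's proof.
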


Finally, concerning the negative results in Theorem \ref{th3}, the only non-trivial case appears when $s=1$, for which we shall establish the following.

\begin{theorem} \label{schauderF} 
Let $\frac{d}{d+1} \le p< 1$ and $2<q\leq\infty$.
Then,
\[
\|\SE_N \|_{F^1_{p,q}\to F^1_{p,q}}\,\approx \, N^{\frac12-\frac1q}.
\] 
\end{theorem}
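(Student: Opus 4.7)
The plan is to establish matching upper and lower bounds for $\|\bbE_N\|_{F^1_{p,q}\to F^1_{p,q}}$.

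For the upper bound, I would first split $\bbE_N f = \varPi_N f + (\bbE_N-\varPi_N)f$. The operator $\varPi_N$ is uniformly bounded on $F^1_{p,q}$ by standard Littlewood--Paley multiplier theory. For the difference, the key idea is to combine Theorem \ref{expthm} with a pointwise Hölder inequality restricted to $O(N)$ Littlewood--Paley bands. I would decompose $f=\varPi_{N+C}f+(I-\varPi_{N+C})f$ for a large constant $C$: the high-frequency piece contributes only $O(\|f\|_{F^1_{p,q}})$ to $\bbE_N f$, since cube averages of LP-high functions at scale $2^{-N}$ benefit from multi-scale cancellation. For the low-frequency piece $g:=\varPi_{N+C}f$, Theorem \ref{expthm} with $r=p$ combined with the embedding $B^1_{p,p}=F^1_{p,p}\hookrightarrow F^1_{p,q}$ (valid for $q\ge p$) gives $\|(\bbE_N-\varPi_N)g\|_{F^1_{p,q}}\lc \|g\|_{F^1_{p,2}}$. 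Since $g$ has Littlewood--Paley support in $O(N)$ consecutive bands, the pointwise Hölder inequality
$$\Bigl(\sum_{j\le N+C}2^{2j}|\Delta_j g|^2\Bigr)^{1/2}\le (N+C)^{1/2-1/q}\Bigl(\sum_{j\le N+C}2^{jq}|\Delta_j g|^q\Bigr)^{1/q}$$
yields, after integration in $L^p$, the bound $\|g\|_{F^1_{p,2}}\lc N^{1/2-1/q}\|g\|_{F^1_{p,q}}\lc N^{1/2-1/q}\|f\|_{F^1_{p,q}}$, completing the upper bound.

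For the matching lower bound, I would construct an extremizer $f=g+h$ with $g$ a Rademacher-type combination of Haar functions at levels $0,\ldots,N-1$, and $h$ in the span of Haar functions at levels $\ge N$ (so that $\bbE_N h=0$, hence $\bbE_N f=g$). The role of the compensator $h$ is to cancel the high-frequency Littlewood--Paley tails of $g$ in a controlled way, thereby reducing its Triebel--Lizorkin quasi-norm: specifically, one arranges for the pointwise LP-profile of $g+h$ to be \emph{equalized} to an $\ell^2$-shape while the profile of $g$ alone retains the natural $\ell^q$-to-$\ell^2$ gap of $N^{1/2-1/q}$. The ratio
$$\frac{\|\bbE_N f\|_{F^1_{p,q}}}{\|f\|_{F^1_{p,q}}}=\frac{\|g\|_{F^1_{p,q}}}{\|g+h\|_{F^1_{p,q}}}\gtrsim N^{1/2-1/q}$$
then gives the claimed lower bound.

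The hard part will be the rigorous implementation of the compensator $h$, which requires sharp Littlewood--Paley estimates for Haar sums at the critical smoothness $s=1$ in the quasi-Banach regime $p<1$; these are delicate computations analogous in spirit to those underpinning Theorem \ref{expthm}, but applied in the reverse direction to produce sharpness rather than a positive bound.
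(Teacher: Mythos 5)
Your upper bound is essentially the paper's argument and is sound in outline: split off $\varPi_N f$, treat the low-frequency part via Theorem \ref{expthm} plus H\"older over the $O(N)$ Littlewood--Paley bands (this is exactly how the paper gets the factor $N^{\frac12-\frac1q}$, via $\|\varPi_Nf\|_{F^1_{p,2}}\lc N^{\frac12-\frac1q}\|f\|_{F^1_{p,q}}$ and the proof of Proposition \ref{j<Nprop}). The one step you merely assert --- that the high-frequency piece $\bbE_N(I-\varPi_{N+C})f$ contributes only $O(\|f\|_{F^1_{p,q}})$ because of ``multi-scale cancellation'' --- cannot be extracted from Theorem \ref{expthm} (whose right-hand side is the $F^1_{p,2}$ norm, useless for the high-frequency tail when $q>2$); it is precisely the content of Proposition \ref{Ps1} (for $d/(d+1)<p<1$) and Proposition \ref{Finftyprop} (needed at the endpoint $p=d/(d+1)$), which control $\bbE_N\varPi_N^\perp f$ by $\|f\|_{B^1_{p,\infty}}$ resp.\ $\|f\|_{F^1_{p,\infty}}$. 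You should either cite these or supply an argument; as written this is a citation gap rather than a wrong step.

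The lower bound, however, is a genuine gap: what you give is a description of what an extremizer must look like, not a construction. Writing $f=g+h$ with $\bbE_N g=g$ and $\bbE_N h=0$ is completely general (any $f$ decomposes this way with $g=\bbE_N f$), so the whole theorem's sharpness is concentrated in producing a concrete pair with $\|g\|_{F^1_{p,q}}\gtrsim N^{\frac12-\frac1q}\|g+h\|_{F^1_{p,q}}$ --- exactly the ``hard part'' you defer. There is no specification of the Haar coefficients of $g$, no construction of the compensator $h$, and no argument for either norm estimate; the claim that the LP-profile of $g+h$ can be ``equalized to an $\ell^2$-shape'' while $g$ ``retains the $\ell^q$-to-$\ell^2$ gap'' is a restatement of the desired conclusion. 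The paper proceeds quite differently and more concretely: it takes a randomized lacunary Weierstrass-type function $f_N(x,t)=\sum_{j\in\fZ_N}2^{-j}r_j(t)e^{2\pi i 2^jx}\psi(x)$ with frequencies $2^j$, $N/4\le j\le N/2$, so that $\|f_N\|_{F^1_{p,q}}\lc N^{1/q}$ by the lacunary estimate of Lemma \ref{bkj}; it then computes $\beta_N*\bbE_N^\perp\psi_j$ explicitly on the intervals $\tI_{N,\mu}$ (Lemma \ref{BN}), obtaining a jump profile $-2^{-N}\psi_j'(\mu 2^{-N})B(2^Nx-\mu)$ of amplitude $\sim 2^{j-N}$, and applies Khintchine's inequality to sum these contributions in $\ell^2$ over $j$, yielding $\|\bbE_N f_N(\cdot,t_0)\|_{F^1_{p,\infty}}\gtrsim N^{1/2}$; finally a tensor-product argument with the special test functions $\Psi=\Delta^M[\phi_0\otimes\varphi_0]$ lifts the one-dimensional example to $\SR^d$. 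None of these ingredients (or substitutes for them) appear in your proposal, so the lower bound --- and hence the equivalence $\approx$ --- is not established.
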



\subsection*{\it This paper} In \S\ref{pre} we set the basic notation.
In \S\ref{S_expthm} and \S\ref{S_expthm:d/p-d} we prove, respectively, Theorems \ref{expthm} and \ref{expthm:d/p-d},
except for the special case $p=d/(d+1)$ which is treated in \S\ref{S_expthmendpoint}. Theorem \ref{th_s0}
is shown in \S\ref{S_s0}, and Theorem \ref{schauderF} in \S\ref{S_s=1}.
In \S\ref{S_th3} we gather all these results and complete the proof of Theorem \ref{th3}, explaining as well the meaning
of the extensions of the operators $\SE_N$ to the full spaces $F^s_{p,q}$. 
In \S\ref{S_dense} we study the failure of density for $\Span\sH_d$ in the case $s=1$. 
In \S\ref{localization} and \ref{S_th1} we pass to the operators $S^{\cU}_R$, showing their relation with $\SE_N$ for admissible enumerations, and establishing Theorem \ref{th1}. Finally, \S\ref{S_th_unc} is devoted to unconditionality, and the proof of Theorem \ref{th_unc}. 

\section*{\it Acknowledgements}
The authors would like to thank the Isaac Newton Institute for Mathematical Sciences, Cambridge, for support and hospitality during the program 
 Approximation, Sampling and Compression in Data Science where some work on this paper was undertaken. This work was supported by EPSRC grant no EP/K032208/1. G.G. was supported in part by grants MTM2016-76566-P, MTM2017-83262-C2-2-P and Programa Salvador de Madariaga PRX18/451 from Micinn (Spain), and 
grant 20906/PI/18 from Fundaci\'on S\'eneca (Regi\'on de Murcia, Spain). A.S. was supported in part by National Science Foundation grants 1500162 and 
1764295. 
T.U. was supported in part by 
 Deutsche Forschungsgemeinschaft (DFG), grant 403/2-1.

\section{Preliminaries}\label{pre}

\subsection{\it Besov and Triebel-Lizorkin quasi-norms}
\label{S_qnorm}
Let $s\in\SR$ and $0<p, q\leq \infty$ be given. Throughout the paper we fix a number $A>d/p$ and an integer 
\Be \label{Mcondition} M >A+|s|+2 . 
\Ee 
Consider two functions $\beta_0, \beta \in C^\infty_c(\SRd)$, 
supported in $(-1/2,1/2)^d$, with the properties
 $|\widehat{\beta}_0(\xi)|>0$ if  $|\xi|\leq1$, 
 $|\widehat{\beta}(\xi)|>0$ if $1/8\leq|\xi|\leq1$ and $\beta$ has vanishing
moments up to order $M$, that is
\Be\label{moments-M}
\int_{\SR^d} \beta(x) \;x_1^{m_1}\cdots x_d^{m_d}\,dx =0, \quad\forall\;m_i\in \bbN_0\;\mbox{ with  $\;m_1+\ldots +m_d\leq M$.}
\Ee
The optimal value of $M$ is irrelevant for the purposes of this paper, and \eqref{Mcondition} suffices for our results.
We let  $\beta_k:=2^{kd}\beta(2^k\cdot)$ for each $k\geq1$, and  denote 
\[L_kf=\beta_k*f\]
whenever $f\in \cS'(\SR^d)$. These convolution operators, sometimes called \emph{local means}, can be used to define 
equivalent quasi-norms in the $B^s_{p,q}$ and $F^s_{p,q}$ spaces.
Namely,
\Be
\big\|g\big\|_{B^s_{p,q}}\,\approx\,\Big\|\big\{2^{ks}L_k g\big\}_{k=0}^\infty\Big\|_{\ell^q(L^p)}\, 
\label{Bspq_localmeans}\Ee
and if  $0<p<\infty$, 
\Be
\big\|g\big\|_{F^s_{p,q}}\,\approx\,\Big\|\big\{2^{ks}L_k g\big\}_{k=0}^\infty\Big\|_{L^p(\ell^q)}\, 
\label{Fspq_localmeans}\Ee
 see e.g.  
 \cite[2.5.3 and 2.4.6]{triebel2}. 
For the latter spaces, when $p=\infty$ (and $q<\infty$) one defines instead
\Be
\big\|g\big\|_{F^s_{\infty, q}}\,\approx\,\sup_{n\geq0}\sup_{I\in\sD_n}\Big(\frac1{|I|}\int_I\sum_{k\geq n}\,2^{ksq}|L_kg(x)|^q\,dx\Big)^{1/q}, 
\label{Finfty}
\Ee
see \cite[(12.8)]{FrJa90}, \cite{BuiTa}. Finally, one lets $F^s_{\infty,\infty}=B^s_{\infty,\infty}$.

Next, let $\eta_0\in C^\infty_c(\SRd)$ be supported on $\{\xi: |\xi|<3/8\}$ and such that $\eta_0(\xi)=1$ if $|\xi|\leq1/4$. We define the operators 
\begin{subequations} 
\begin{align}\label{La0def}
\widehat{\La_0 f}(\xi) &=\frac{\eta_0(\xi)}{\widehat \beta_0(\xi)}\widehat f(\xi)\,,
\\
\label{Lakdef}
\widehat{\La_k f}(\xi) &=\frac{\eta_0(2^{-k}\xi) -\eta_0(2^{-k+1}\xi)}{\widehat \beta(2^{-k}\xi)}\widehat f(\xi), \quad k\ge 1,
\end{align}
\end{subequations}
so that  
\Be\label{resofid} f=\sum_{j=0}^\infty L_j \La_j f \Ee  with convergence in $\cS'$. 
Of course, one obtains (the usual) equivalent norms if in \eqref{Bspq_localmeans}, \eqref{Fspq_localmeans} and \eqref{Finfty}
 the operators 
$L_k$ are replaced by $\La_k$.
In particular, if we let $\varPi_N=\sum_{j=0}^NL_j\La_j$, then 
\Be\sup_N\|\varPi_N f\|_{F^s_{p,q}}\lesssim \|f\|_{F^s_{p,q}}.\label{PiN}\Ee

Below we shall be interested in uniformly bounded extensions of the dyadic averaging operators $\SE_N$ defined in \eqref{expect}.
We shall denote 
\[ \bbE_N^\perp= I-\bbE_N\mand \varPi_N^\perp= I-\varPi_N,\]
and write
\Be
\SE_N-\varPi_N=\SE_N\,\varPi_N^\perp\,-\,\SE_N^\perp\,\varPi_N.
\label{EN-PiN}
\Ee
Then, using \eqref{Bspq_localmeans}, we have
\Bea
\big\|\SE_N f-\varPi_N f\big\|_{B^s_{p,r}} & \lesssim &  
\;\Big\|\big\{2^{ks} L_k\SE_N\varPi_N^\perp f\big\}_{k=0}^\infty\Big\|_{\ell^r(L^p)}
\;+ \hskip1cm\label{SENPiN}\\
& & \hskip1cm +
\;\;\Big\|\big\{2^{ks}L_k\SE^\perp_N\varPi_Nf\big\}_{k=0}^\infty\Big\|_{\ell^r(L^p)}.\nonumber
\Eea
Following \cite{gsu, gsu-endpt}, we shall prove Theorems \ref{expthm}, \ref{expthm:d/p-d} and \ref{th_s0} 
using suitable estimates for the functions $L_k\bbE_N L_jg$ and $L_k\bbE_N^\perp L_jg$, for each $j,k\geq0$,
some of which will be new in this paper.

\section{Proof of Theorem \ref{expthm}: The case $p>\frac{d}{d+1}$}\label{S_expthm}

Let $s=1$ and $d/(d+1)<p<1$. For these indices, Theorem \ref{expthm} will be a consequence of the following two results.
The first result is contained in \cite{gsu-endpt} (Propositions 3.1 and 3.4), and was also implicit in \cite{gsu} (proof of inequality (19)).

\begin{proposition}\label{Ps1}
For $\frac d{d+1}<p<1$ and $r>0$, it holds
\Be\label{j>N} \sup_N 
\Big(\sum_{k=0}^\infty 2^{kr} \big\|   L_k  
\,\bbE_N\,\varPi_N^\perp f
\big\|_p^r\Big)^{1/r} \lc  
\|f\|_{B^1_{p,\infty}}.
\Ee 
The same holds if $\SE_N$ is replaced by $T_N[\cdot,\fa]$ with $\|\fa\|_{\ell^\infty}\leq 1$.
\end{proposition}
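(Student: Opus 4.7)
The plan is to expand $\varPi_N^\perp f=\sum_{j>N}L_j\La_jf$ via the resolution of identity \eqref{resofid}, so that
\[
L_k\bbE_N\varPi_N^\perp f \;=\; \sum_{j>N} L_k\bbE_N L_j(\La_j f).
\]
Since $p<1$, the $p$-triangle inequality in $L^p$ yields
\[
\|L_k\bbE_N\varPi_N^\perp f\|_p^p \;\le\; \sum_{j>N} \|L_k\bbE_N L_j(\La_j f)\|_p^p,
\]
while standard Littlewood-Paley estimates give $\|\La_j f\|_p\lesssim \|L_jf\|_p+\|L_{j-1}f\|_p\lesssim 2^{-j}\|f\|_{B^1_{p,\infty}}$. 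The task thus reduces to proving operator bounds $\|L_k\bbE_N L_jg\|_p\lesssim a_{k,N,j}\|g\|_p$ for $j>N$ and $k\ge 0$, with coefficients $a_{k,N,j}$ for which $\sum_{k\ge 0}2^{kr}\bigl(\sum_{j>N}2^{-jp}a_{k,N,j}^p\bigr)^{r/p}$ is bounded uniformly in $N$, for every $r>0$.

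The core observation concerns $\bbE_N L_jg$ for $j>N$: on each $I\in\sD_N$, its constant value equals $|I|^{-1}\int g(z)K_I^j(z)\,dz$ with $K_I^j(z)=\int_I\beta_j(y-z)\,dy$. Since $\beta$ has mean zero by \eqref{moments-M}, $K_I^j$ vanishes whenever $\dist(z,\partial I)\ge c\,2^{-j}$, and satisfies $\|K_I^j\|_\infty\lesssim 1$. Hence $\bbE_N L_jg$ is piecewise constant on $\sD_N$ with each cell-value an average of $g$ supported on a $2^{-j}$-neighborhood of the corresponding boundary; via Peetre-type maximal function arguments this yields favourable $L^p$ bounds, reflecting the codimension-one character of the dyadic skeleton.

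Next I would analyze $L_k(\bbE_N L_jg)$ in two regimes. For $k\le N$, the support scale $2^{-k}$ of $\beta_k$ exceeds the jump scale $2^{-N}$ of $\bbE_N L_jg$, and iterating the vanishing moment condition \eqref{moments-M} yields pointwise decay of order $2^{-(N-k)M}$. For $k>N$, the function $L_k(\bbE_N L_jg)$ is supported in the $2^{-k}$-thickening of the $(d-1)$-dimensional dyadic skeleton $\bigcup_{I\in\sD_N}\partial I$, whose $d$-volume is $\sim 2^{N-k}$ per unit cube; since $p<1$, this support constraint converts into a favourable $L^p$ factor. Combining the two regimes produces the required weights $a_{k,N,j}$. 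The corresponding bound for $T_N[\cdot,\fa]$ follows identically: $T_N$ is, up to the $\ell^\infty$-bounded multipliers $a_{\nu,\ep}$, a frequency-localized variant of $\bbE_{N+1}-\bbE_N$ and enjoys the same boundary-concentration structure.

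The main obstacle will be the regime $k>N$ with $k$ comparable to (or larger than) $j$, where the support bound on the thickened skeleton alone is too weak and must be supplemented by the smoothness of $\beta_k$ and Nikolskii-type embeddings for the frequency-localized $L_jg$. The sharp hypothesis $p>d/(d+1)$, equivalently $d(\tfrac1p-1)<1$, reflects precisely this balance: it ensures that the geometric gain from the codimension-one skeleton dominates the polynomial weight $2^{kr}$, so that the outer sum in $k$ converges with arbitrary exponent $r>0$.
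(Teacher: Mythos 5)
Your outline of the main mechanism is essentially the one the paper (via \cite{gsu}, \cite{gsu-endpt}) uses: expand $\varPi_N^\perp f=\sum_{j>N}L_j\La_jf$ and work scale by scale with the $p$-triangle inequality; observe that for $j>N$ the cell values of $\bbE_N L_jg$ are averages of $g$ against the kernels $K_I^j$, which by the vanishing mean of $\beta$ live on a $2^{-j}$-neighborhood of $\partial I$ (this is exactly \eqref{ENjmax}); use that $L_k\bbE_N$ vanishes off the $2^{-k}$-thickened skeleton when $k>N$ (this is \eqref{gsu_L1}); and convert $L^\infty$ bounds on small cubes into $L^p$ information via Peetre maximal functions as in \eqref{LinftyJ}. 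Two of your steps, however, would fail as written. First, the reduction to operator bounds $\|L_k\bbE_N L_jg\|_p\lesssim a_{k,N,j}\|g\|_p$ for general $g\in L^p$ is impossible when $p<1$: the cell values are $L^1$-averages of $g$, which the $L^p$ quasinorm does not control (already $\bbE_0$ applied to a bump of height $1$ on a set of measure $\delta$ gives $\|\bbE_0 g\|_p=\delta\gg\delta^{1/p}=\|g\|_p$). The estimates must be stated for band-limited $g$ (here $g=\La_jf$), with the Peetre/Nikolskii input built into the hypothesis; since that is what you in fact use in your ``core observation'', this is an imprecision rather than a fatal flaw.

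The genuine gap is your treatment of $k\le N$. The claim that ``iterating the vanishing moment condition \eqref{moments-M} yields pointwise decay of order $2^{-(N-k)M}$'' for $L_k(\bbE_N L_jg)$ is false: $\bbE_N L_jg$ is merely piecewise constant on $\sD_N$, neither smooth at scale $2^{-N}$ nor spectrally concentrated near $2^N$, so the moments of $\beta_k$ buy nothing by themselves. Indeed, if such a decay (with $M$ as in \eqref{Mcondition}) were available, the same scheme with weight $2^{ks}$ would give $\sup_N\|\bbE_N\|_{F^s_{p,q}\to F^s_{p,q}}<\infty$ for all $1<s<1/p$, contradicting the growth $\approx 2^{(s-1)N}$ recorded in Corollary \ref{C_e}. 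What is actually available, and what the cited proofs use, is a single power $2^{k-N}$: split $L_k\bbE_N L_j=L_kL_j-L_k\bbE_N^\perp L_j$ as in \eqref{jN_eq}, handle $L_kL_j$ by almost orthogonality in $(k,j)$, and exploit only the zero mean $\int_I\bbE_N^\perp[h]\,dy=0$ on each $I\in\sD_N$ (as in the derivation of \eqref{cAkG}), together with the fact that the relevant part of $\La_jf$ lives on $\cU_{N,j}$. Alternatively, for this particular proposition the crude bound $\|L_kh\|_p\lesssim 2^{(N-k)d(1/p-1)}\|h\|_p$ for $h$ piecewise constant on $\sD_N$, combined with the boundary-concentration gain in $j$ and the weight $2^k$, also closes the $k\le N$ case precisely because $d(\tfrac1p-1)<1$. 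Finally, your ``main obstacle'' at $k\gtrsim j$ is not one: for $k>N$ the weighted bound carries a factor $2^{-k(1/p-1)}$, geometrically decaying since $p<1$; the constraint $p>d/(d+1)$ enters when summing over $j>N$, not in that regime.
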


The second result is new, and it will require a few additional arguments compared to \cite{gsu, gsu-endpt}. 
The conditions on $p$ are also less demanding. Here $h^p=F^0_{p,2}$ is the local Hardy space; see e.g. \cite[2.5.8]{Tr83}.

\begin{proposition}
\label{j<Nprop}
For $\frac{d}{d+2}<p<1$ and $r>0$, it holds
\Be\label{j<N} \sup_N 
\Big(\sum_{k=0}^\infty 2^{kr} \big\|   L_k  
\,\bbE_N^\perp\,\varPi_N f
\big\|_p^r\Big)^{1/r} \lc  
\|\nabla f\|_{h^p}.
\Ee 
The same holds if $\SE_N^\perp$ is replaced by $T_N[\cdot,\fa]$ with $\|\fa\|_{\ell^\infty}\leq 1$.
\end{proposition}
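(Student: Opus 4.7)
The strategy is dual to that of Proposition \ref{Ps1}: now $\varPi_N f$ is essentially Fourier-localized below $2^N$, hence smooth at scale $2^{-N}$, and the cancellation of $\bbE_N^\perp$ on each dyadic cube $I\in\sD_N$ exploits this smoothness. I would begin by decomposing $\varPi_N f = \sum_{j=0}^N L_j \La_j f$ via \eqref{resofid}, so that the task reduces to proving, for every $j\le N$ and $k\ge 0$, a bound of the form
\begin{equation*}
\|L_k\,\bbE_N^\perp\, L_j g\|_p \;\lesssim\; 2^{-M'|k-j|}\,\gamma(N-j)\,\Phi_j(g),
\end{equation*}
where $g$ plays the role of $\La_j f$, $\Phi_j(g)$ is a Peetre-maximal-function quantity at scale $2^{-j}$, and the factor $\gamma(N-j)$ provides the crucial decay in $N-j$. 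Summing the resulting geometric series in $k$ and $j$ via the $p$-triangle inequality for $p\le 1$, together with the identification $\|\nabla f\|_{h^p}\approx\|(\sum_j 2^{2j}|L_j\La_j f|^2)^{1/2}\|_p$, then yields the conclusion.

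The kernel estimate is established via the pointwise identity
\begin{equation*}
L_k\,\bbE_N^\perp v(x) \;=\; \sum_{I\in\sD_N}\int_{I}\beta_k(x-y)\,\bigl[v(y)-v_I\bigr]\,dy, \qquad v_I\,:=\,\frac{1}{|I|}\int_I v,
\end{equation*}
applied with $v=L_j g$. For $j\le N$ this function is smooth at scale $2^{-j}\ge 2^{-N}$; Taylor-expanding $v$ about the center $x_I$ of each cube and exploiting the symmetry of $I$ about $x_I$ (which annihilates the first-order contribution to $v_I$) gives the pointwise bound $|v(y)-v_I|\lesssim 2^{-N}\sup_{I^*}|\nabla v|$, together with a sharper contribution when the residual linear factor is integrated against the centered $\beta_k(x-\cdot)$ via $\int z\,\beta_k(z)\,dz=0$. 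For $k\ge j$ the vanishing moments of $\beta$ yield a further $2^{-M'(k-j)}$ gain through Taylor expansion in $y$ around $x$, while for $k<j$ standard almost-orthogonality gives $\|L_k L_j g\|_p\lesssim 2^{-M'(j-k)}\|g\|_p$. When $k>N$ the support of $\beta_k(x-\cdot)$ localizes to a single cube, and mixing between cubes contributes only on the $2^{-k}$-neighborhood of the $\sD_N$-skeleton, whose relative measure is $2^{N-k}$.

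The main obstacle lies in the range $p<1$: Poincar\'e-type inequalities are unavailable, so the transition from $L^\infty$-bounds on $\nabla v$ to $L^p$-bounds on $\nabla f$ must be made via Peetre-type maximal inequalities (valid for $a>d/p$) and the atomic characterization of $h^p$. Reaching the full range $p>d/(d+2)$ requires exploiting \emph{two} independent orders of cancellation---one from the centered integration in $\bbE_N^\perp$, one from the first vanishing moment of $\beta$---which matches $h^p$-atoms with one vanishing moment; this is the essential improvement over Proposition \ref{Ps1}, whose argument uses only one order of cancellation and is therefore restricted to $p>d/(d+1)$.

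Finally, the extension to the operators $T_N[\cdot,\fa]$ requires no new ideas. Writing $\bbE_N^\perp=\sum_{n\ge N}\Delta_n$ with $\Delta_n=\bbE_{n+1}-\bbE_n$ the $n$-th generation Haar projection, and observing that $T_N[\cdot,\fa]$ coincides with $\Delta_N$ but with each Haar coefficient multiplied by $a_{\nu,\ep}$ with $|a_{\nu,\ep}|\le 1$, the pointwise bounds derived for $\bbE_N^\perp\varPi_N$ transfer verbatim to $T_N[\varPi_N f,\fa]$ with constants independent of $\fa$.
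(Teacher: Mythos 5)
Your central device---decomposing $\varPi_N f=\sum_{j\le N}L_j\La_j f$ and seeking a term-by-term bound $\|L_k\,\bbE_N^\perp L_j g\|_p\lesssim 2^{-M'|k-j|}\gamma(N-j)\,\Phi_j(g)$ to be summed with the $p$-triangle inequality---has a genuine gap, for two reasons. First, the claimed decay factor is simply not there in the regime $j\ll k\le N$: take $\La_j f$ a single wave packet of frequency $2^j$; then $L_j\La_jf-\bbE_N L_j\La_jf$ has size $\approx 2^{j-N}$ throughout the packet, and for $k$ comparable to $N$ the convolution with $\beta_k$ retains this size near the discontinuities of the step function, so that $2^{N}\|L_N\,\bbE_N^\perp L_j\La_j f\|_p\approx 2^{j}\|\La_j f\|_p$ with no gain in $N-j$ (nor in $|k-j|$). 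Second, and more fundamentally, even granting favorable term-by-term estimates, summing over $j$ by the $p$-triangle inequality yields $\big(\sum_{j\le N}(2^{j}\|\La_j f\|_p)^p\big)^{1/p}$, a $B^1_{p,p}$-type quantity; since $p<2$ the comparison between $\ell^p(L^p)$ and $L^p(\ell^2)$ goes the wrong way, so this is \emph{not} dominated by $\|\nabla f\|_{h^p}\approx\big\|\big(\sum_j 2^{2j}|L_j\La_j f|^2\big)^{1/2}\big\|_p$. Invoking that identification at the end does not help, because nothing in your scheme preserves an $\ell^2$-in-$j$ structure; at best one would reach a bound by $\|f\|_{B^1_{p,p}}$, which is strictly stronger than the stated right-hand side and does not prove the proposition.

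This is precisely the point the paper emphasizes before Lemma \ref{ENperpPiN}: the improvement over the earlier, genuinely term-by-term analysis of \cite{gsu,gsu-endpt} comes from letting $\bbE_N^\perp$ act on $\varPi_N f$ as a whole. The paper proves the single pointwise estimate $|\bbE_N^\perp[\varPi_N f](y)|\lesssim \fM^{**}_{A,N}(2^{-N}\varPi_N\nabla f)(y')$, then two norm estimates in terms of $\|\nabla\varPi_N f\|_p$: Lemma \ref{LkENPiNklarge} for $k>N$ (localization to $\cU_{N,k}$ plus writing $\beta_k*\varPi_Nf=2^{-k}\tilde\beta_k*\nabla\varPi_Nf$ off the skeleton) and Lemma \ref{LkENPiNksmall} for $k\le N$ (mean zero of $\bbE_N^\perp[\varPi_Nf]$ on each $I\in\sD_N$ against the Lipschitz bound for $\beta_k$, plus the $\ell^p$-count of the $\approx 2^{(N-k)d}$ cubes). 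The restriction $p>d/(d+2)$ enters only through summability of $2^{(N-k)(d/p-d-2)}$ over $k\le N$, not through any matching with $h^p$-atoms possessing a vanishing moment, as you suggest. Finally, the Hardy-space norm appears solely via $\|\varPi_N\nabla f\|_p\le\big\|\sup_N|\varPi_N\nabla f|\big\|_p\lesssim\|\nabla f\|_{h^p}$, i.e.\ the maximal characterization of $h^p$ applied to the single function $\nabla f$; this step is unavailable once you have split into frequency blocks. Your description of the $k>N$ regime and of the passage to $T_N[\cdot,\fa]$ (using $\int_I h_I=0$) is consistent with the paper's treatment, but it does not repair the main step.
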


We shall prove  Proposition \ref{j<Nprop} in the next subsections, 
but we indicate now how \eqref{j>N} and \eqref{j<N} imply \eqref{TL2bd}.
Just use the Littlewood-Paley type inequality 
\[
\|\nabla f\|_{h^p}\lesssim\|f\|_{F^1_{p,2}};
\]
(see {\it e.g.} \cite[2.3.8/3]{Tr83}) and the embedding 
 $F^1_{p,2}\hookrightarrow B^1_{p,\infty}$.

\subsection{\it A pointwise estimate}
As in \cite{gsu} 
we shall use the Peetre maximal functions
\Be \label{peetrenontang}\fM_{A,j}^{**} g(x)= \sup_{h\in \SRd}\frac{ |g(x+h)|}{(1+2^j|h|)^A}\,,
\Ee
typically applied to scalar or Hilbert space valued $g\in \cS'(\bbR^d)$  satisfying
\Be\label{specassu}
 \supp \widehat g
 \subset \{\xi: |\xi|\le 2^{j+1}\} .
 \Ee
In \cite{Pe} it was shown that for  $g$  satisfying \eqref{specassu},
\Be \label{peetreLp}
\|\fM_{A,j}^{**} g\|_p\le C_{p,A}
\|g\|_p ,  \text{ $\quad 0<p\leq\infty$,  
 $\quad A>d/p$.}
 \Ee 
 
 In what follows it will  be convenient to use the notation\[
|x|_\infty=\max_{1\leq i \leq d}|x_i|, \quad x=(x_1,\ldots, x_d)\in\SR^d.
\]
The following lemma is a variation of
\cite[(35)]{gsu}. 
The novelty here is that the operator $\SE^\perp_N$ is acting on $\varPi_Nf=\sum_{j\leq N}L_j\La_j f$, rather than in each $L_j\La_j f$ separately.
 
 \begin{lemma}  
 \label{ENperpPiN}
 Let $f\in\cS'(\SR^d)$.
 Then 
 \Be
 \big|\bbE_N^\perp [\varPi_N f](y)\big| \lesssim \!\inf_{|y'-y|_\infty\le  2^{1-N}}\,\fM_{A,N}^{**} (2^{-N}\varPi_N \nabla f)(y'),\quad y\in \SR^d.
\label{bound_infM}
 \Ee
In particular, if $|y-\frac{\mu}{2^{N}}|_\infty\leq 2^{1-N}$, then, for every $p>0$, 
\Be
\label{L1_aux}
|\SE^\perp_N\varPi_N f(y)|\,\lesssim\, \bigg[\mint_{|h|_\infty\leq 2^{2-N}}\big|\fM_{A,N}^{**} (2^{-N}\varPi_N \nabla f)(\tfrac\mu{2^N}+h)\big|^p\,dh\,\bigg]^{1/p}.
\Ee
These bounds also hold if we replace $\bbE_N^\perp$ with $T_N(\cdot, \fa)$ with $\|\fa\|_\infty\le 1$.
 \end{lemma}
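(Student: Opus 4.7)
My plan is to derive \eqref{bound_infM} from two elementary ingredients---a mean-value estimate for the band-limited function $\varPi_N f$, and the standard bound that converts a pointwise supremum over a neighborhood of size $\sim 2^{-N}$ into an evaluation of the Peetre maximal function $\fM^{**}_{A,N}$---and then to deduce \eqref{L1_aux} by simple averaging. The extension to the operators $T_N[\cdot,\fa]$ will require no essentially new idea beyond the vanishing mean of each $h^{\ep}_{N,\nu}$ with $\ep\in\Upsilon$.

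For the pointwise bound, I would fix $y\in\SR^d$ and let $I_{N,\nu}$ denote the dyadic cube of side $2^{-N}$ containing $y$. Writing $g:=\varPi_N f$, which is smooth since $\widehat g$ is supported in $\{|\xi|\le 2^N\}$, one has $\bbE_N^\perp g(y)=\fint_{I_{N,\nu}}(g(y)-g(z))\,dz$, and the mean-value theorem along the segment $[y,z]\subset I_{N,\nu}$ gives
\[
|\bbE_N^\perp \varPi_N f(y)|\le \sqrt d\,2^{-N}\sup_{w\in I_{N,\nu}}|\nabla \varPi_N f(w)|.
\]
To upgrade the supremum to a Peetre quantity, I use that $\nabla\varPi_N f=\varPi_N\nabla f$ inherits the spectral support $\{|\xi|\le 2^N\}\subset\{|\xi|\le 2^{N+1}\}$, so \eqref{peetrenontang} applies with $j=N$. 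For any $y'$ with $|y'-y|_\infty\le 2^{1-N}$ and any $w\in I_{N,\nu}$, one has $|w-y'|\le |w-y|+|y-y'|\lesssim 2^{-N}$, hence $(1+2^N|w-y'|)^A\lesssim 1$, which yields $|\nabla\varPi_N f(w)|\lesssim \fM^{**}_{A,N}(\varPi_N\nabla f)(y')$. Combining and taking the infimum over admissible $y'$ produces \eqref{bound_infM}.

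For \eqref{L1_aux}, when $|y-\mu/2^N|_\infty\le 2^{1-N}$ every $y'=\mu/2^N+h$ with $|h|_\infty\le 2^{2-N}$ still satisfies $|y'-y|_\infty\lesssim 2^{-N}$, so the pointwise bound of the previous step holds for each such $y'$ (with an enlarged but absolute constant). Raising to the $p$-th power and averaging in $h$ then dominates the infimum by any $L^p$-mean. Finally, for $T_N[\cdot,\fa]$, one has $T_N[\varPi_N f,\fa](y)=\sum_{\ep\in\Upsilon}a_{\nu,\ep}\,2^{Nd}\langle\varPi_N f,h^{\ep}_{N,\nu}\rangle\, h^{\ep}_{N,\nu}(y)$ for $y\in I_{N,\nu}$; using $\int h^{\ep}_{N,\nu}=0$ one replaces $\varPi_N f$ by $\varPi_N f-\varPi_N f(y)$ inside each pairing, and the same mean-value estimate bounds every inner product by $\lesssim 2^{-N}\cdot 2^{-Nd}\sup_{I_{N,\nu}}|\nabla\varPi_N f|$. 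Together with $|a_{\nu,\ep}|\le 1$ and $|h^{\ep}_{N,\nu}(y)|\le 1$, this reproduces the same pointwise estimate as for $\bbE_N^\perp$. The main---and rather mild---obstacle is only the careful bookkeeping of constants so that the specific range $|y'-y|_\infty\le 2^{1-N}$ stated in \eqref{bound_infM} is large enough to accommodate all $y'=\mu/2^N+h$ with $|h|_\infty\le 2^{2-N}$ in the averaging step of \eqref{L1_aux}.
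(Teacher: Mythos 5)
Your argument is correct and follows essentially the same route as the paper: exploit the cancellation (the average of differences over $I\in\sD_N$ for $\bbE_N^\perp$, and $\int h^{\ep}_{N,\nu}=0$ for $T_N[\cdot,\fa]$), apply a mean-value estimate to the smooth band-limited function $\varPi_N f$, and dominate the resulting local supremum of $2^{-N}\nabla\varPi_Nf$ by $\fM^{**}_{A,N}(2^{-N}\varPi_N\nabla f)(y')$ at any nearby $y'$, after which the averaged bound \eqref{L1_aux} follows since the cube $\{|y'-y|_\infty\le 2^{1-N}\}$ sits inside $\{|\,\cdot\,-\tfrac{\mu}{2^N}|_\infty\le 2^{2-N}\}$ (so no constant-enlargement is even needed). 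The only difference is that you bypass the paper's auxiliary reproducing identity $\varPi_Nf=\Phi_N*\varPi_Nf$ and the weight $(1+2^N|z|)^A$ used to control the tail of $\Phi_N$, which is immaterial because $\varPi_Nf$ is already smooth.
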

 
 \begin{proof}Recall that  $\widehat {\varPi_N f}(\xi)=\eta_0(2^{-N} \xi)\widehat f(\xi)$. Let $\Phi\in\cS$, with $\widehat\Phi(\xi)=1$ when $|\xi|\leq1$, and let $\Phi_N(z)=2^{Nd}\Phi(2^N z) $.
Then $$\varPi_N f=\Phi_N* \varPi_N f.$$
If $I\in\sD_N$ is such that $y\in I$, we have
\begin{align*}
 \big|\SE_N^\perp(\varPi_Nf)(y)\big| &=  \big|\bbE_N[\Phi_N*(\varPi_N f)](y)-\Phi_N*\varPi_Nf(y)\big|\\
& 	\hskip-1.5cm=\Big|\mint_{I} \int_{\SRd} \Phi_N(z)\big[ \varPi_Nf(v-z) -\varPi_Nf(y-z)\big] dz\, dv\Big|
  \\
&\hskip-1.5cm =\Big|\mint_{I} \int_{\SRd}\Phi_N(z)\int_0^1\inn{v-y}{\nabla \varPi_Nf(y+s(v-y)-z)} ds dz\, dv\Big|
  \\
& \hskip-1.5cm \lc  \int_{\bbR^d} |\Phi_N(z)| (1+2^N|z|)^A   \, dz \,
 \; \sup_{\tilde z\in \bbR^d} \frac{|2^{-N}\nabla \varPi_Nf(y'+\tilde z)|}{(1+2^N |\tilde z|)^A}
  \\
& \hskip-1.5cm \le C_A\;  \fM^{**}_{A,N} [2^{-N}\nabla\varPi_N f](y'),  \end{align*} 
	for any $y'$ such that $|y-y'|_\infty\leq 2^{1-N}$. This shows \eqref{bound_infM}. 
	The last assertion in \eqref{L1_aux} follows easily from here.

Finally, if we replace $\bbE_N^\perp$ with $T_N[\cdot, \fa]$, the 
cancellation of $\int_I h_I=0$
implies that, for $w\in I$,
\[
|T_N[\varPi_Nf,a](w)|\leq \Big|\tfrac1{|I|}\int_I h_I(v)\big[\Phi_N*\varPi_Nf(v)-\Phi_N*\varPi_Nf(w)\big]\,dv\Big|.
\]
The rest of the proof is then carried out as above. 
\end{proof}

\subsection{\it Norm estimates}

As in \cite{gsu}, we use the notation \Be
\cU_{N,k}=\Big\{y\in \bbR^d\mid \min_{1\leq i\leq d}\dist (y_i, 2^{-N}\bbZ)\le 2^{-k-1}\Big\}, \quad k>N.
\label{UNj}
\Ee
Roughly speaking, this is the set of points at distance $O(2^{-k})$ from $\Ds\bigcup_{I\in\sD_N}\partial I$.
Note (or recall from \cite[Lemma 2.3.i]{gsu}) that if $k>N$ then
\Be
L_k(\SE_N g)(x)=0, \quad \forall\; x\in \cU_{N,k}^\complement=\SR^d\setminus \cU_{N,k}.
\label{gsu_L1}
\Ee
The next two results will be obtained using Lemma \ref{ENperpPiN}.
 \begin{lemma} \label{LkENPiNklarge}
Let $0<p\leq 1$. Then for every $k>N$ and $\|\fa\|_\infty\le 1$,
 \[ 2^{k}\|L_k \bbE_N^\perp \varPi_N f\|_p + 
  2^{k}\|L_k T_N[\varPi_N f,\fa]\|_p  
 \lc 2^{-(k-N)(\frac 1p-1)} \|\nabla\varPi_N f\|_p.\]
 \end{lemma}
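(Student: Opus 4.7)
\smallskip
\noindent\textbf{Plan.} I focus on the bound for $\SE_N^\perp\varPi_Nf$; the argument for $T_N[\varPi_Nf,\fa]$ is parallel, with the only modification that $T_N[\varPi_Nf,\fa]$ is piecewise constant on the finer grid $\sD_{N+1}$ instead of $\sD_N$, so the role of $\cU_{N,k}$ is played by $\cU_{N+1,k}$, a set of the same order of relative measure.

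Decompose $\SE_N^\perp\varPi_Nf=\varPi_Nf-\SE_N\varPi_Nf$. Because $\SE_N\varPi_Nf$ is piecewise constant on $\sD_N$ and $\beta$ has vanishing mean, the same localization argument as in \eqref{gsu_L1} shows that $L_k\SE_N\varPi_Nf$ vanishes outside $\cU_{N,k}$; hence $L_k\SE_N^\perp\varPi_Nf=L_k\varPi_Nf$ on $\cU_{N,k}^\complement$, and
\[
\|L_k\SE_N^\perp\varPi_Nf\|_p^p=\int_{\cU_{N,k}}\bigl|L_k\SE_N^\perp\varPi_Nf\bigr|^p\,dx+\int_{\cU_{N,k}^\complement}\bigl|L_k\varPi_Nf\bigr|^p\,dx.
\]
For the first integral, the $L^p$-averaged form \eqref{L1_aux} of Lemma \ref{ENperpPiN} gives, for $y$ in a cube $I_{N,\mu}\in\sD_N$,
\[
|\SE_N^\perp\varPi_Nf(y)|\lesssim 2^{-N}\Bigl(\mint_{I^{**}_{N,\mu}}|\fM^{**}_{A,N}(\varPi_N\nabla f)|^p\,dh\Bigr)^{1/p},
\]
and since $\beta_k$ concentrates at scale $2^{-k}\ll 2^{-N}$, this bound passes through $L_k$ essentially without change for $x\in I_{N,\mu}$. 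Combining this with the measure estimate $|\cU_{N,k}\cap I_{N,\mu}|\lesssim 2^{-Nd+N-k}$, the bounded overlap of the enlargements $I^{**}_{N,\mu}$, and the Peetre inequality \eqref{peetreLp} applied to the bandlimited $\varPi_N\nabla f$ with $A>d/p$, I obtain
\[
\int_{\cU_{N,k}}\bigl|L_k\SE_N^\perp\varPi_Nf\bigr|^p\,dx\lesssim 2^{-Np}\cdot 2^{N-k}\cdot\|\nabla\varPi_Nf\|_p^p,
\]
whose $p$-th root is exactly $2^{-N-(k-N)/p}\|\nabla\varPi_Nf\|_p$, matching $2^{-k-(k-N)(1/p-1)}\|\nabla\varPi_Nf\|_p$ as required.

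For the second integral, the Fourier supports of $\widehat\beta(2^{-k}\cdot)$ (the annulus $\{2^{k-3}\leq|\xi|\leq 2^k\}$) and $\widehat{\varPi_Nf}$ (the ball $|\xi|\leq\tfrac34 2^N$) are disjoint whenever $k\geq N+3$, so $L_k\varPi_Nf\equiv 0$ and the integral vanishes. In the two remaining cases $k\in\{N+1,N+2\}$, $L_k\varPi_Nf$ is bandlimited to an annulus of bounded aspect ratio, so iterated reverse Bernstein combined with ordinary Bernstein gives, for any integer $m$,
\[
\|L_k\varPi_Nf\|_p\lesssim 2^{-km}\|\nabla^m L_k\varPi_Nf\|_p\lesssim 2^{-N-(k-N)m}\|\nabla\varPi_Nf\|_p;
\]
choosing $m\geq\lceil 1/p\rceil$ matches the target bound.

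\smallskip
\noindent\textbf{Main obstacle.} The heart of the argument is extracting the sharp $2^{(N-k)/p}$ gain from the small measure of $\cU_{N,k}$, a quasi-Banach effect which depends crucially on the $L^p$-averaged form \eqref{L1_aux} of Lemma \ref{ENperpPiN} so that Peetre's inequality can be applied after summing cube contributions; a pointwise bound followed by a Hardy--Littlewood maximal estimate would fail here since $p\le 1$. A secondary technical point is the Fourier-nondisjoint range $k-N\in\{1,2\}$, which is resolved by iterating reverse Bernstein rather than applying it once.
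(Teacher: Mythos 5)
Your estimate on $\cU_{N,k}$ is essentially the paper's argument: the pointwise/averaged bound \eqref{L1_aux}, the measure count $|I_{N,\mu}\cap\cU_{N,k}|\approx 2^{-k}2^{-N(d-1)}$, and Peetre's inequality for the band-limited $\varPi_N\nabla f$ give exactly the factor $2^{-N}2^{(N-k)/p}$, and your remark about $T_N[\cdot,\fa]$ is also in line with the paper. The gap is in your treatment of the complementary term $\|L_k\varPi_Nf\|_{L^p(\cU_{N,k}^\complement)}$. You assert that $\widehat\beta(2^{-k}\cdot)$ is supported in the annulus $\{2^{k-3}\le|\xi|\le 2^k\}$, so that $L_k\varPi_Nf\equiv 0$ for $k\ge N+3$. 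In this paper $\beta$ is a local-mean kernel: it is $C^\infty_c$, supported in $(-1/2,1/2)^d$, with vanishing moments up to order $M$; its Fourier transform is entire and is not supported in any annulus, so $L_k\varPi_Nf$ does not vanish for large $k$. For the same reason your reverse-Bernstein step for $k-N\in\{1,2\}$ has no annular spectrum to act on: the spectrum of $L_k\varPi_Nf$ fills the whole ball $|\xi|\lc 2^N$, low frequencies included. Moreover, the hypothesis you would need (band-limited $\beta_k$) is incompatible with the other pillar of your proof: the exact vanishing $L_k(\SE_N g)=0$ off $\cU_{N,k}$ in \eqref{gsu_L1} uses precisely that $\beta_k$ has spatial support of diameter $\approx 2^{-k}$, which a band-limited kernel cannot have. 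So the two halves of your argument require mutually exclusive choices of $\beta$, and with either single choice one of the halves fails (with band-limited $\beta_k$ the localization to $\cU_{N,k}$ — the source of the $2^{(N-k)/p}$ gain you rightly call the heart of the matter — only holds up to tails, which are delicate for $p\le 1$).

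The paper closes this term by a cancellation argument instead of spectral disjointness: letting $\tilde\beta^i$ be a compactly supported primitive of $\beta$ in the $x_i$-variable (still with $M-1$ vanishing moments), one writes $\beta_k*\varPi_Nf=2^{-k}\tilde\beta_k*(\nabla\varPi_Nf)=2^{-k}\tilde\beta_k*\big(\Phi_N*\nabla\varPi_Nf\big)$, and then the standard estimate for a kernel with many vanishing moments convolved with a function band-limited to $|\xi|\lc 2^N$ (\cite[Lemma 2.2]{gsu}) yields $\|L_k\varPi_Nf\|_p\lc 2^{-k}2^{-(k-N)(M-A)}\|\nabla\varPi_Nf\|_p$, which is stronger than the needed $2^{-k}2^{-(k-N)(\frac1p-1)}$ since $M$ is chosen large. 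Replacing your Fourier-support argument by this step (or by any equivalent moment/cancellation estimate valid for local means) repairs the proof; as written, the second term is not controlled.
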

 \begin{proof}
The observation in \eqref{gsu_L1} implies that
\Bea
\big\|L_k\SE_N^\perp\varPi_N f\big\|_p & \lesssim &  
\big\|L_k\SE_N^\perp\varPi_N f\big\|_{L^p(\cU_{N,k}^\complement)}+\big\|L_k\SE_N^\perp\varPi_N f\big\|_{L^p(\cU_{N,k})}\nonumber \\ 
& \lesssim & 
\|L_k\varPi_Nf\|_{L^p(\cU_{N,k}^\complement)}+
\Big[\sum_{\mu\in\SZd}
\|L_k\SE_N^\perp\varPi_N f\big\|_{L^p(\cU_{N,k}\cap I_{N,\mu})}^p
\Big]^\frac1p\!.
\label{kN_aux2}\Eea
Using \eqref{L1_aux} and the fact that $\supp\beta_k(x-\cdot)\subset \mu2^{-N}+O(2^{-N})$ for $x\in I_{N,\mu}$, the last term is controlled by
\Beas
 \Big[\sum_{\mu\in\SZd}|I_{N,\mu}\cap\cU_{N,k}| \;\mint_{|h|_\infty\le 2^{2-N}}\!\! \big|\fM_{N, A}^{**} (2^{-N}\nabla \varPi_Nf)(2^{-N}\mu+h)\big|^p\,dh\Big]^\frac1p &  \\
\quad \lesssim  \,\big[2^{-k}2^{-N(d-1)}\big]^\frac1p\,
2^{\frac {Nd}p}\,\big\| \fM_{N,A}^{**} (2^{-N}\nabla \varPi_Nf)\big\|_p\;\lesssim\;2^{-N}\,2^{\frac{N-k}p}\,\|\nabla \varPi_N f\|_p.  
\Eeas
To estimate the first term on the right hand side of \eqref{kN_aux2},
observe that we can write
\[
\beta_k*\varPi_Nf = 2^{-k}\tilde \beta_k*(\nabla\varPi_Nf),
\]
where $\tilde\beta=(\tilde\beta^1,\ldots,\tilde\beta^d)$ and each $\tilde\beta^i$ is a primitive of $\beta$ in the $x_i$-variable
(hence with vanishing moments up to order $M-1$). Moreover, 
\begin{align*}
\|L_k\varPi_N f\|_{L^p(\cU_{N,k}^\complement)} &\leq 2^{-k} \big\|\tilde\beta_k*[\Phi_N *\nabla \varPi_N f]\big\|_p 
\\&\lc 2^{-k} 2^{-(k-N)(M-A)} \|\nabla \varPi_N f\|_p,
\end{align*}
using in the last step the cancellation of $\tilde\beta_k$; see \cite[Lemma 2.2]{gsu}. Analogous arguments apply for $T_N[\varPi_N f, \fa]$ in place of $\bbE_N^\perp \varPi_N f$.
 \end{proof}

 \begin{lemma} \label{LkENPiNksmall}
  Let $0<p\le 1$. Then, for every $k\le N$, $\|\fa\|_\infty\le 1$,
  \Be 
 2^k \|L_k \bbE_N^\perp \varPi_N f\|_p+
 2^k \|L_k T_N[ \varPi_N f,\fa]\|_p
 \lc 2^{(N-k)(\frac dp-d-2)}  \|\nabla \varPi_N f\|_p.
\Ee
 \end{lemma}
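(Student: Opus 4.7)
The plan is to decompose both $\bbE_N^\perp\varPi_Nf$ and $T_N[\varPi_Nf,\fa]$ as sums of atoms supported on the cubes $I_{N,\nu}$ of scale $2^{-N}$, each with vanishing mean, and to exploit this cancellation against a Taylor expansion of the smooth kernel $\beta_k$. Since $k\leq N$, one derivative on $\beta_k$ against a displacement of size $2^{-N}$ yields a factor $2^{k-N}$; together with the factor $2^{-N}$ from the size of each atom (coming from Lemma \ref{ENperpPiN}), and followed by $p$-subadditivity on the $L^p$-norm plus the $L^p$-boundedness of the Peetre maximal function, these ingredients deliver the advertised exponent $(N-k)(d/p-d-2)$.

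More precisely, write $\bbE_N^\perp\varPi_Nf = \sum_{\nu\in\SZd}\phi_\nu$ with $\phi_\nu := \chi_{I_{N,\nu}}\cdot(\varPi_Nf - (\varPi_Nf)_{I_{N,\nu}})$. Each $\phi_\nu$ is supported in $I_{N,\nu}$, has $\int\phi_\nu=0$, and by Lemma \ref{ENperpPiN} satisfies $|\phi_\nu(x)|\lesssim 2^{-N}\fM^{**}_{A,N}(\nabla\varPi_Nf)(y)$ for all $x,y\in I_{N,\nu}$. Analogously, one writes $T_N[\varPi_Nf,\fa] = \sum_\nu\psi_\nu$ with $\psi_\nu := \sum_{\e\in\Upsilon}a_{\nu,\e}\,2^{Nd}\langle\varPi_Nf,h^\e_{N,\nu}\rangle\,h^\e_{N,\nu}$; each $\psi_\nu$ is supported in $I_{N,\nu}$, has vanishing mean (since $\e\neq \vec 0$), and admits the same pointwise bound via the mean-zero property of $h^\e_{N,\nu}$ along any coordinate with $\e_i=1$.

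For each atom the vanishing integral gives $\beta_k*\phi_\nu(x) = \int[\beta_k(x-y)-\beta_k(x-y^c_\nu)]\phi_\nu(y)\,dy$, where $y^c_\nu$ denotes the center of $I_{N,\nu}$, and the mean value theorem yields
\[
|L_k\phi_\nu(x)|\,\lesssim\,\|\nabla\beta_k\|_\infty\cdot 2^{-N}\cdot\|\phi_\nu\|_1\cdot\chi_{\tilde I^k_\nu}(x),
\]
where $\tilde I^k_\nu$ is a cube of side $O(2^{-k})$ around $I_{N,\nu}$. Inserting $\|\nabla\beta_k\|_\infty\sim 2^{k(d+1)}$ and $\|\phi_\nu\|_1\lesssim 2^{-N-Nd}\,\fM^{**}_{A,N}(\nabla\varPi_Nf)(y_\nu)$ for any $y_\nu\in I_{N,\nu}$, and using $p$-subadditivity for $p\leq 1$, one obtains
\[
\|L_k\bbE_N^\perp\varPi_Nf\|_p^p \,\leq\, \sum_\nu\|L_k\phi_\nu\|_p^p \,\lesssim\, 2^{p[k(d+1)-2N-Nd]-kd}\sum_\nu[\fM^{**}_{A,N}(\nabla\varPi_Nf)(y_\nu)]^p.
\]
Since $\fM^{**}_{A,N}(\nabla\varPi_Nf)$ is nearly constant on cubes of side $2^{-N}$ (as $\nabla\varPi_Nf$ has spectrum in $\{|\xi|\lesssim 2^N\}$), the last sum is $\lesssim 2^{Nd}\|\nabla\varPi_Nf\|_p^p$ by \eqref{peetreLp} (which applies since $A>d/p$). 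A routine bookkeeping of the exponents then yields exactly $2^k\|L_k\bbE_N^\perp\varPi_Nf\|_p\,\lesssim\,2^{(N-k)(d/p-d-2)}\|\nabla\varPi_Nf\|_p$, and the $T_N$-case follows from the identical argument applied to the atoms $\psi_\nu$. The main obstacle is simply choosing the atomic decomposition so that the $2^{-N}$-scale cancellation is explicit; the remainder is essentially scaling bookkeeping.
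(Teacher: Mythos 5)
Your proof is correct and is essentially the paper's own argument: both exploit the mean-zero property of $\bbE_N^\perp\varPi_Nf$ (resp. $T_N[\varPi_Nf,\fa]$) on each cube of $\sD_N$ against the Lipschitz bound $|\beta_k(x-y)-\beta_k(x-y^c_\nu)|\lesssim 2^{k(d+1)}2^{-N}$, invoke the pointwise bound of Lemma \ref{ENperpPiN} in terms of $\fM^{**}_{A,N}(2^{-N}\nabla\varPi_Nf)$, use $p$-subadditivity for $p\le 1$, and finish with the Peetre inequality \eqref{peetreLp}. The only differences are organizational: you estimate atom by atom and sample the (essentially constant at scale $2^{-N}$) Peetre maximal function at points $y_\nu$, while the paper fixes $x$, sums over the $\approx 2^{(N-k)d}$ cubes in $\cZ_{k,N}(x)$ via $\ell^1\hookrightarrow\ell^{1/p}$, and controls local $L^p$-averages — the same bookkeeping in dual order.
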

 
 \begin{proof}
   Since   $\int_{I} \bbE_N^\perp[ \varPi_N f](y) \,dy=0$ for $I\in\sD_N,$ 
   we may  write
  \begin{multline*}L_k\big(  \bbE_N^\perp[ \varPi_N f]\big)(x)\\=\sum_{\mu\in \cZ_{k,N}(x)}  \int_{I_{N,\mu} }   \big(\beta_k(x-y) -\beta_k(x-2^{-N}\mu)\big)\,
 \bbE_N^\perp[ \varPi_N f](y)\, dy\,
\end{multline*}
 where
 \Be\label{Lax} 
 \cZ_{k,N}(x) = \{\mu\in \bbZ^d:  |x-2^{-N}\mu|_\infty\le 2^{-N}+2^{-k-1} \}.\Ee 
 Note that  $\card\,\cZ_{k,N}(x)\approx 2^{(N-k)d}$. 
 Now use 
 \[|\beta_k(x-y) -\beta_k(x-2^{-N}\mu)|\lc 2^{kd}\,2^{k-N},\quad \text{ if $y\in I_{N,\mu}$,}\]
 in combination with Lemma \ref{ENperpPiN} to obtain
\begin{align*}
&|L_k\big(  \bbE_N^\perp[ \varPi_N f]\big)(x)| \,\lesssim\,
\\ &\quad
 \,\lc\, 2^{(k-N)(d+1)}  \sum_{\mu\in\cZ_{k,N}(x)}\Big(\mint_{|h|_\infty\leq 2^{2-N}}\!\! \big| \fM_{A,N}^{**} (2^{-N}\nabla \varPi_N f)\big|^p(\tfrac\mu{2^N}+h)\,dh\Big)^\frac1p
 \\
&\quad\lesssim 2^{(k-N)(d+1)} 
 \Big(\sum_{\mu\in\cZ_{k,N} (x)}\mint_{|h|_\infty\leq 2^{2-N}}\!\! \big| \fM_{A,N}^{**} (2^{-N}\nabla \varPi_N f)\big|^p(\tfrac\mu{2^N}+h)\,dh \Big)^\frac1p,
\end{align*} the last step using the embedding $\ell^{1} \hookrightarrow \ell^{1/p}$, since  $p\le 1$.
From this
\begin{align*}
&\|L_k\big(  \bbE_N^\perp[ \varPi_N f]\big)\|_p 
\\& \lc 2^{(k-N)(d+1)} 
\Big[\int_{\SRd}\!\!\sum_{\mu\in\cZ_{k,N}(x)}\mint_{|h|_\infty\leq 2^{2-N}}\!\! \big| \fM_{A,N}^{**} (2^{-N}\nabla \varPi_N f)\big|^p(\tfrac\mu{2^N}+h)\,dh\,dx\Big]^\frac1p 
 \\
 &\lc 2^{(k-N)(d+1)} 
  \Big[\sum_{\mu\in\SZd}2^{-kd}\,\mint_{|h|_\infty\leq 2^{2-N}}\!\! \big| \fM_{A,N}^{**} (2^{-N}\nabla \varPi_N f)\big|^p(\tfrac\mu{2^N}+h)\,dh\Big]^\frac1p\\
  &\lc 2^{(k-N)(d+1)} 
  2^{(N-k)d/p}\,\big\|\fM_{A,N}^{**} (2^{-N}\nabla\varPi_N f)\big\|_p,
\end{align*}
and the assertion follows by
 the Peetre inequality for $\fM^{**}_{A,N}$.
 Analogous arguments apply for $T_N[\varPi_N f, \fa]$ in place of $\bbE_N^\perp \varPi_N f$.
  \end{proof}
  
 \begin{proof}[Proof of Proposition \ref{j<Nprop}]
 Using the
 Lemmas \ref{LkENPiNklarge} and  \ref{LkENPiNksmall}, and noticing that
 we may sum in $k$ since $\frac{d}{d+2}<p<1$, one easily obtains
\[
\Big(\sum_{k=0}^\infty 2^{kr} \big\|   L_k  
\,\bbE_N^\perp\,\varPi_N f
\big\|_p^r\Big)^{1/r}\lesssim\,\|\varPi_N\nabla f\|_p.
\]
The last quantity can be estimated further, applying to $g=\nabla f$  the inequality
 \Be\label{partialsum} \|\varPi_N g\|_p\leq \Big\|\sup_{N\geq0}|\varPi_Ng|\Big\|_p \lc \|g\|_{h^p}\approx \|g\|_{F^0_{p,2}},
 \Ee
 which follows for  example using  
the standard maximal function characterization of the $h^p$ norm. 
This proves \eqref{j<N}.
 The proof for the operators $T_N[\cdot,a]$ is exactly analogous.
   \end{proof}

\section{Proof of Theorem \ref{expthm:d/p-d}: the case $s=d/p-d$}
\label{S_expthm:d/p-d}

Let $s=d/p-d$ and $d/(d+1)<p\leq1$ (we will take up the endpoint case $p=d/(d+1)$, when $s=1$ in \S\ref{S_expthmendpoint}). For these indices, Theorem \ref{expthm:d/p-d} will be a consequence of the following two results.
The first result was already established in \cite{gsu-endpt} (Propositions 3.2 and 3.3), using the same type of analysis as in \cite{gsu}.
The inequality is slightly stronger than needed due to $F^s_{p,\infty}\hookrightarrow B^s_{p,\infty}$.

\begin{proposition}\label{firsttwosums}
Let  $\frac{d}{d+1}<p\le 1$ and $r>0$. Then
\Be\Big(\sum_{k=0}^\infty 2^{k(d/p-d)r}\Big \|
L_k \bbE_N^\perp \varPi_Nf\Big\|_p^r\Big)^{1/r} \lc \|f\|_{B^{\frac dp-d}_{p,\infty}}.
\label{subeq1}\Ee
The same holds if $\SE_N^\perp$ is replaced by $T_N[\cdot,\fa]$ with $\|\fa\|_{\ell^\infty}\leq 1$.
\end{proposition}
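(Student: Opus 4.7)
The plan follows the template developed in \cite{gsu} and refined in \cite{gsu-endpt}. First, I would resolve $\varPi_N f = \sum_{j=0}^N L_j \Lambda_j f$ via the identity \eqref{resofid}, and apply $p$-subadditivity of $\|\cdot\|_p^p$ (valid since $p\leq 1$) to reduce matters to bounding the building blocks $\|L_k \bbE_N^\perp L_j g\|_p$ with $g = \Lambda_j f$. The local means characterization \eqref{Bspq_localmeans} supplies the input norm
$$\|L_j \Lambda_j f\|_p \;\lesssim\; 2^{-j(d/p-d)}\,\|f\|_{B^{d/p-d}_{p,\infty}},$$
so the task becomes summing the building-block estimates against this bound.

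The core work is then to establish pointwise and $L^p$ estimates for $\|L_k \bbE_N^\perp L_j g\|_p$ of the schematic form
$$\|L_k \bbE_N^\perp L_j g\|_p \;\lesssim\; 2^{-\sigma|k-N|}\,2^{-\tau|j-N|}\,\|L_j g\|_p,$$
with strictly positive $\sigma, \tau$ in the range $\tfrac{d}{d+1}<p\le 1$. These split according to the orderings of $j, k, N$: when $k > N$, I would use the vanishing property \eqref{gsu_L1} of $L_k \bbE_N$ outside the thin set $\cU_{N,k}$ together with Lemma \ref{ENperpPiN} and the measure bound $|\cU_{N,k}\cap I_{N,\mu}|\approx 2^{-k}\,2^{-N(d-1)}$; when $k\le N$, I would use the cancellation $\int_I \bbE_N^\perp g =0$ on each $I\in \sD_N$ combined with the mean-value estimate $|\beta_k(x-y)-\beta_k(x-2^{-N}\mu)|\lesssim 2^{kd+k-N}$ for $y\in I_{N,\mu}$. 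The relative positions of $j$ versus $k$ are controlled by the vanishing moments \eqref{moments-M} of $\beta_k$ (when $k\ge j$) and by Peetre maximal function bounds \eqref{peetreLp} for band-limited functions (when $j\ge k$). The operators $T_N[\cdot,\fa]$ are handled in parallel: the cancellation $\int_I h^\ep_{N,\nu} = 0$ plays exactly the role of $\int_I \bbE_N^\perp g = 0$, and $\|\fa\|_{\ell^\infty}\le 1$ controls the Haar coefficients uniformly.

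Combining these ingredients, the double sum over $j\le N$ and $k\ge 0$ weighted by $2^{k(d/p-d)r}$ reduces to a geometric-type series, convergent with a constant independent of $N$. The main obstacle is the bookkeeping to verify that the decay exponent $\sigma$, produced jointly by the order-one cancellation of $\bbE_N^\perp$ on the dyadic cubes in $\sD_N$ and by the thinness of $\cU_{N,k}$, strictly exceeds $d/p-d$; this is the quantitative content of the hypothesis $p>\tfrac{d}{d+1}$, which is exactly equivalent to $d/p-d<1$. The endpoint $p=d/(d+1)$ (where $d/p-d=1$) lies just outside this range, where the cancellation barely fails to absorb the weight, and is treated separately in \S\ref{S_expthmendpoint}.
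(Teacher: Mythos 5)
Your plan is essentially the paper's own route: the paper does not reprove Proposition \ref{firsttwosums} but cites \cite{gsu-endpt} (Propositions 3.2 and 3.3), whose argument is exactly the block-by-block analysis you describe — $p$-subadditivity over $j\le N$ with $\|\La_j f\|_p\lc 2^{-j(d/p-d)}\|f\|_{B^{d/p-d}_{p,\infty}}$, the thin-set bound $|\cU_{N,k}\cap I_{N,\mu}|\approx 2^{-k}2^{-N(d-1)}$ for $k>N$, the cancellation of $\bbE_N^\perp$ over $\sD_N$-cubes combined with the Lipschitz bound on $\beta_k$ for $k\le N$, moment conditions and Peetre maximal estimates for the band-limited pieces, and the Haar cancellation playing the same role for $T_N[\cdot,\fa]$. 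Your exponent bookkeeping is also correct: the order-one oscillation gain $2^{j-N}$ must dominate the weight after summation, which is precisely $d/p-d<1$, i.e.\ $p>d/(d+1)$, with the weaker condition $d/p-d<1/p$ governing the $k>N$ sum.
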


The second proposition is new, and its proof will require several additional refinements compared to the arguments given in \cite{gsu}. 
\begin{proposition}\label{Finftyprop}
Let $\frac{d-1}{d}<p\le 1$ and $r>0$. Then,  
\begin{align}
\label{jkgeN-ple1}
&\Big(\sum_{k=N+1}^\infty 2^{k(d/p-d)r}\Big \|\sum_{j>N}
L_k \bbE_N L_j \La_j f\Big\|_p^r\Big)^{1/r} \lc \|f\|_{F^{\frac dp-d}_{p,\infty}}
\\
\label{jgeNkleN-ple1}
&\Big(\sum_{k=0}^N 2^{k(d/p-d)r}\Big \|\sum_{j>N}
L_k \bbE_N L_j \La_j f\Big\|_p^r\Big)^{1/r}
\lc \|f\|_{F^{\frac dp-d}_{p,\infty}}.
\end{align}
The same holds if $\SE_N$ is replaced by $T_N[\cdot,a]$ with $\|a\|_{\ell^\infty}\leq 1$.
\end{proposition}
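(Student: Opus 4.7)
The strategy parallels that used in the proofs of Propositions \ref{Ps1}, \ref{j<Nprop}, and \ref{firsttwosums}: derive pointwise or local $L^p$-norm bounds for the individual pieces $L_k\SE_N L_j\La_j f$ (for $j>N$) in terms of Peetre-type maximal functions $\fM_{A,j}^{**}(L_j\La_jf)$, then sum in $j$ and $k$. The upper bound enters via the local-means characterization
\[
\|f\|_{F^{d/p-d}_{p,\infty}}\,\approx\,\Big\|\sup_{j\geq 0}2^{j(d/p-d)}\fM_{A,j}^{**}(L_j\La_jf)\Big\|_p,
\]
which in particular yields $\|L_j\La_jf\|_p\lc 2^{-j(d/p-d)}\|f\|_{F^{d/p-d}_{p,\infty}}$. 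Since $p\leq 1$, the $p$-triangle inequality is used to split sums over $j$ outside the $L^p$-quasinorm. The core oscillation estimate is obtained by representing $\beta_j=\nabla\cdot\mathbf{G}_j$ with a vector-valued primitive $\mathbf{G}_j$ at scale $2^{-j}$ (with $\|\mathbf{G}_j\|_1\approx 2^{-j}$, using the moments \eqref{moments-M}) and applying the divergence theorem on $\partial I$:
\[
|\SE_N L_j\La_jf(y)|\,\lc\,2^{N-j}\,\sup_{z\in I^*}\fM_{A,j}^{**}(L_j\La_jf)(z),\quad y\in I\in\sD_N.
\]

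For inequality \eqref{jkgeN-ple1} (the range $k>N$), this is combined with the support property \eqref{gsu_L1} which localizes $L_k\SE_N$ to the thin set $\cU_{N,k}$ of density $\approx 2^{N-k}$. The resulting bound $\|L_k\SE_N L_j\La_jf\|_p \lc 2^{(N-k)/p}\,2^{N-j}\,\|L_j\La_jf\|_p$, after raising to $p$-powers, summing over $j>N$ and multiplying by $2^{kr(d/p-d)}$, produces a doubly geometric series whose exponents conspire to cancel the $N$-dependence precisely when $p>(d-1)/d$, matching the stated threshold exactly.

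For inequality \eqref{jgeNkleN-ple1} (the range $k\leq N$), the support restriction of the first case is unavailable, so decay in $|k-N|$ must be extracted from the structure of $\beta_k$. Splitting $L_k\SE_N=L_k-L_k\SE_N^\perp$, the piece $L_k L_j\La_jf$ decays as $k$ and $j$ separate, by standard Littlewood-Paley estimates using the smoothness and moments of $\beta$; while $L_k\SE_N^\perp L_j\La_j f$ is controlled by a Taylor-expansion / vanishing-moments analysis of $\beta_k$ against the piecewise-constant function $\SE_N L_j\La_jf$ on each cube of $\sD_N$. Both contributions produce bounds whose sums over $k\leq N$, weighted by $2^{kr(d/p-d)}$, converge uniformly in $N$.

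The principal obstacle lies in the second case, where three distinct scales ($2^{-k}$, $2^{-N}$, $2^{-j}$) must be balanced simultaneously against the $p\leq 1$ summability and the weight $2^{k(d/p-d)}$; this is especially delicate at $p=1$, where the weight is trivial and genuine decay in $|k-N|$ is required, so one must carefully choose $A$ and combine Peetre's inequality $\|\fM_{A,j}^{**}g\|_p\lc\|g\|_p$ with enlargement arguments that do not lose in the $p$-index. The extension to $T_N[\cdot,\fa]$ proceeds by the same method, since each Haar function $h^\e_{N,\nu}$ (with $\e\neq\vec 0$) has zero integral on its supporting cube, providing the same oscillation bound for $2^{Nd}\langle L_j\La_jf,h^\e_{N,\nu}\rangle$ via the divergence-theorem argument.
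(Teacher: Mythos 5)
There is a genuine gap, and it sits exactly at the heart of the proposition. Your key intermediate estimate, $\|L_k\SE_N L_j\La_jf\|_p\lc 2^{(N-k)/p}\,2^{N-j}\,\|L_j\La_jf\|_p$, is false. The divergence-theorem bound $|\SE_N L_j\La_jf|\lc 2^{N-j}\sup_{I^*}\fM^{**}_{A,j}(\La_jf)$ on a cube $I\in\sD_N$ is correct as a sup bound, but it cannot be converted into that $L^p$ inequality: the maximal function $\fM^{**}_{A,j}$ oscillates at scale $2^{-j}\ll 2^{-N}$, so its supremum over $I^*$ is not comparable to its $L^p$ average over $I^{**}$. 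Test with $\La_jf$ a single bump of width $2^{-j}$ adjacent to $\partial I$: then $\SE_N L_j\La_jf\approx 2^{(N-j)d}$ on $I$ while $\|\La_jf\|_p\approx 2^{-jd/p}$, and the true quantity exceeds your claimed bound by the factor $2^{(j-N)(d/p-d+1)}$. More fundamentally, any scheme that reduces each scale $j$ separately to the Besov datum $\|\La_jf\|_p$ cannot prove the proposition at the critical smoothness $s=d/p-d$: with the \emph{sharp} single-$j$ estimate the summands in $j>N$ are all of the same size ($\approx 2^{Nd}$ after normalization), so the $j$-sum diverges, and indeed the Besov-norm analogue of \eqref{jkgeN-ple1} is false (place single bumps of widths $2^{-j}$, $N<j\le N+m$, at pairwise disjoint locations along $\partial I_0$ for a fixed $I_0\in\sD_N$; then the left side is $\gtrsim m$ while $\|f\|_{B^{d/p-d}_{p,\infty}}\approx 1$ and $\|f\|_{F^{d/p-d}_{p,\infty}}\approx m^{1/p}$). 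The threshold $p>(d-1)/d$ that your doubly geometric series produces is therefore an artifact of the invalid estimate.

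What your plan is missing is the mechanism by which the paper genuinely exploits the Triebel--Lizorkin (rather than Besov) structure of the right-hand side: one keeps the localized quantities $\|\La_jf\|_{L^\infty(J)}$ for the boundary cubes $J\in\sD_{j+1}[\partial I]$ appearing in the estimate for $\SE_N(L_j\La_jf)$, bounds each by $\big(\mint_{\om(J)}|\fM^*_j(\La_jf)|^p\big)^{1/p}$, and then uses that the special child cubes $\om(J)$ are pairwise disjoint \emph{across all scales} $j>N$ (Lemma \ref{omegacubelemma}). This converts the double sum over $j$ and $J$ into a single integral over $I^{**}$ of $G=\sup_{\ell>N}2^{\ell(d/p-d)}\fM^*_\ell(\La_\ell f)$, whose $L^p$ norm is controlled by $\|f\|_{F^{d/p-d}_{p,\infty}}$ via Peetre and Hardy--Littlewood maximal inequalities; the factor $2^{-(k-N)(d-\frac{d-1}{p})}$ coming from $|I\cap\cU_{N,k}|$ is then what requires $p>(d-1)/d$. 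The same device is indispensable in the range $k\le N$, where in addition one should first replace $\La_jf$ by $\widetilde f_j=(\La_jf)\bbone_{\cU_{N,j}}$ (using $\SE_N L_j\La_jf=\SE_N L_j\widetilde f_j$) before splitting off the term $L_kL_j\widetilde f_j$, and extract the gain $2^{k-N}$ from the cancellation of $\SE_N^\perp$ against differences of $\beta_k$; your outline of this case names the difficulty but supplies no substitute for the cross-scale disjointness argument, without which the sum over $j>N$ again diverges.
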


\subsection{\it Notation and  observations on dyadic cubes}\label{omegacubesect}
Recall that every dyadic cube $I$ is contained in a unique parent cube 
of double side length. Also each dyadic cube has $2^d$ children cubes of half side length. It will be useful to single out one of the children cubes according to the following definition.

\begin{definition} Let $I$ be a dyadic cube. We denote by $\om(I)$ the unique child of $I$ with the property that its closure contains the center of the parent cube of $I$.
\end{definition}

We need some further notation (taken from \cite{gsu}).
For each dyadic cube $I\in\sD_N$, we denote by $\sD_N(I)$ the set of all its neighboring $2^{-N}$-cubes, that is,  $I'\in\sD_N$ with $\bar{I}\cap \bar{I'}\not=\emptyset$. 
Likewise, if $\ell>N$   we   denote by 
$\sD_\ell[\partial I]$ the set of all $J\in \sD_{\ell}$ such that 
$\bar{J}\cap\partial I\not=\emptyset$.

\begin{lemma} \label{omegacubelemma}
(i) Let $J\in \sD_\ell[\partial I]$. Then 
$$2^{-\ell-1}\le \dist(x,\partial I)_\infty\le 2^{-\ell} \text{ for all $x\in \omega(J)$.}$$

(ii) Let $I\in \sD_N$, let $\ell_1, \ell_2>N$ and consider two distinct  cubes
$J_1\in \sD_{\ell_1}[\partial I]$, 
$J_2\in \sD_{\ell_2}[\partial I]$. Then $\omega(J_1)$, $\omega(J_2)$ have disjoint interiors.
\end{lemma}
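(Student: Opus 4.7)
Both parts reduce to coordinate-wise dyadic bookkeeping. The plan is to first normalize, by translation, to $I=[0,2^{-N}]^d$, so that $\partial I=\bigcup_{i=1}^d\{x:x_i\in\{0,2^{-N}\}\}$. Every $J\in\sD_\ell$ factorizes as $J=\prod_i J_i$ with each $J_i$ a dyadic interval of length $2^{-\ell}$. Since $\ell>N$, dyadic nesting forces each $J_i$ and each parent $P(J_i)$ to be either contained in $[0,2^{-N}]$ or disjoint from its interior; consequently $\omega(J)$ itself factorizes as $\omega(J)=\prod_i\omega_i$, where $\omega_i\subset J_i$ is the half of $J_i$ abutting the midpoint of $P(J_i)$.

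For part (i), I will first establish the underlying one-dimensional statement: if $J_i$ contains one of the endpoints $e\in\{0,2^{-N}\}$, then $J_i$ must be the half of $P(J_i)$ containing $e$, the midpoint of $P(J_i)$ is the corner of $J_i$ opposite to $e$, and $\omega_i$ is the half of $J_i$ farther from $e$, lying at distance in $[2^{-\ell-1},2^{-\ell}]$ from $e$; otherwise $\dist(\omega_i,\{0,2^{-N}\})\geq 2^{-\ell}$. Let $T=\{i:\bar J_i\cap\{0,2^{-N}\}\neq\emptyset\}$; the hypothesis $\bar J\cap\partial I\neq\emptyset$ forces $T\neq\emptyset$. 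I will then split into the two dyadic subcases $J\subset I$ or $J\cap I^\circ=\emptyset$. In the first, $\dist(x,\partial I)_\infty=\min_i\min(x_i,2^{-N}-x_i)$: contributions from $i\in T$ lie in $[2^{-\ell-1},2^{-\ell}]$ and those from $i\notin T$ are $\geq 2^{-\ell}$, so the minimum sits in $[2^{-\ell-1},2^{-\ell}]$. In the second, $x\notin I$ and $\dist(x,\partial I)_\infty=\max_i\max(0,-x_i,x_i-2^{-N})$; the excesses from indices $i\in T$ with $J_i$ on the exterior side lie in $[2^{-\ell-1},2^{-\ell}]$ and the rest vanish, so the maximum again sits in $[2^{-\ell-1},2^{-\ell}]$.

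For part (ii), I will split on $\ell_1$ versus $\ell_2$. If $\ell_1=\ell_2$, distinct cubes of the same dyadic generation are interior-disjoint, hence so are $\omega(J_i)\subset J_i$. If $\ell_1\neq\ell_2$, take $\ell_1<\ell_2$; dyadic nesting gives either $J_1,J_2$ interior-disjoint (same argument) or $J_2\subsetneq J_1$. In the remaining subcase I will apply part (i) to obtain, for $x\in\omega(J_1)$ and $y\in\omega(J_2)$,
\[
\dist(x,\partial I)_\infty\geq 2^{-\ell_1-1}\geq 2^{-\ell_2}\geq\dist(y,\partial I)_\infty,
\]
which already forbids any intersection whenever $\ell_2>\ell_1+1$.

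\textbf{The main obstacle} is the borderline subcase $\ell_2=\ell_1+1$, where the two distance intervals only share the endpoint $2^{-\ell_1-1}$ and the display above merely gives $\omega(J_1)\cap\omega(J_2)\subseteq\{z:\dist(z,\partial I)_\infty=2^{-\ell_1-1}\}$. To close the argument I will observe that this level set is, near $\partial I$, a finite union of hyperplane pieces parallel to the faces of $\partial I$ and therefore has empty $d$-dimensional interior; since $\omega(J_1)^\circ\cap\omega(J_2)^\circ$ is open and contained in it, it must be empty, which gives the claim.
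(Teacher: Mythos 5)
Your proof is correct, and it is considerably more detailed than the paper's, which disposes of the lemma in three lines; the differences are worth recording. For (i) the paper gets the upper bound directly from $\bar J\cap\partial I\neq\emptyset$ plus the fact that $J$ has $\ell^\infty$-diameter $2^{-\ell}$, and the lower bound with no coordinate bookkeeping: the parent $P(J)$, being of scale $\ell-1\ge N$, lies inside $I$ or inside one of its equal-sized neighbors, so $\partial I$ does not meet the interior of $P(J)$, while $\omega(J)$ sits inside the concentric cube of half the side length of $P(J)$, hence at $\ell^\infty$-distance at least $2^{-\ell-1}$ from $\partial P(J)$. Your tensorized computation reaches the same conclusion and gives a bit more (the exact position of $\omega(J)$ relative to the nearby faces), at the cost of two unstated but routine checks in the exterior subcase: every coordinate $i$ with $J_i$ on the exterior side must belong to $T$ (otherwise $\bar J$ would miss $\bar I$, contradicting $\bar J\cap\partial I\neq\emptyset$), and at least one such coordinate exists because $J\not\subset I$; without these, ``the rest vanish'' and the lower bound on the maximum are not justified, though both are one-line observations. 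For (ii) the paper simply asserts that the case $\ell_1\neq\ell_2$ follows from (i); read literally with the closed ranges $[2^{-\ell-1},2^{-\ell}]$ this leaves exactly the touching case $\ell_2=\ell_1+1$ that you isolate, and your level-set argument (a nonempty open set cannot lie in $\{x:\dist(x,\partial I)_\infty=2^{-\ell_1-1}\}$, which is a finite union of faces of two cubes) closes it correctly. A marginally shorter patch: your one-dimensional analysis already shows that on the open cube $\omega(J)^\circ$ the inequalities in (i) are strict, so the open ranges $(2^{-\ell-1},2^{-\ell})$ are pairwise disjoint for distinct $\ell$; alternatively, the $\ell^\infty$-distance to $\partial I$ strictly decreases along the segment from $x$ to a nearest point of $\partial I$, so it cannot be constant on a nonempty open set.
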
 

\begin{proof} The upper bound in (i) is true for all $x\in J$, by definition of $\sD_\ell[\partial I]$ and the lower bound follows  from the definition of $\omega(J)$ since  the parent cube of $J$  is contained in $I$ or one of its neighbors of equal side length. To see  (ii)  first observe that $J_1$, $J_2$ are disjoint if $\ell_1=\ell_2$ (and hence $\om(J_1)$ and $\omega(J_2)$ are disjoint). If $\ell_1\neq \ell_2$ then (ii) follows from (i).
\end{proof}

\subsection{\it Proof of Proposition \ref{Finftyprop}}
We make a preliminary observation about maximal functions.
If  $g$ is continuous, for each $j\geq0$ we let\[
\fM^*_jg(x):=\sup_{|h|_\infty\leq 2^{-j}}|g(x+h)|.
\]
Then, if a cube $J\in \sD_{j+1}$ has center $c_J$, we have
\Be\label{LinftyJ}
\sup_{x\in J}|g(x)|\leq\,\inf_{|h|_\infty\leq 2^{-j-1}}\fM^*_jg(c_J+h)\leq
\Big[
\mint_{\om(J)} |\fM^*_jg|^p\,\Big]^{\frac1p}.
\Ee

\begin{proof}[Proof of \eqref{jkgeN-ple1}]
Let $j, k>N$. By \eqref{gsu_L1}, 
\[
L_k\bbE_N[L_j\La_j f](x)=0\quad \mbox{ if $x\in\cU_{N,k}^\complement$}.
\] 
 Moreover, by 
 \cite[Lemma 2.3 (ii)]{gsu}
 we have
 \Be \label{ENjmax} |\bbE_N (L_j\Lambda_j f)(y)|\lc 2^{(N-j)d} 
 \sum_{I\in \sD_N} \bbone_I(y) \sum_{J\in \sD_{j+1}[\partial I]} \|\Lambda_j f\|_{L^\infty(J)}.
 \Ee
 Let  $x\in \cU_{N,k}\cap I$, for some 
 $I\in\sD_N$. Then $\supp\beta_k(x-\cdot)\subset \bigcup_{I'\in\sD_N(I)}I'$, and therefore \eqref{ENjmax} implies
 \begin{align*}
|L_k\bbE_N[L_j \La_jf](x)|& \leq  \int|\beta_k(x-y)|\,\big|\SE_N(L_j\La_j f)(y)\big|\,dy\\
& \lc    2^{(N-j)d} \sum_{I'\in\sD_N(I)}\sum_{J\in\sD_{j+1}[\partial I']}
\|\La_j f\|_{L^\infty(J)}\,\|\beta_k\|_{1}.
\end{align*}
Using the inequality in \eqref{LinftyJ}, this in turn implies (since $p\leq 1$)
\begin{align}
A_k(x)^p & :=  \Big[\sum_{j>N}|L_k\SE_NL_j\La_jf(x)|\Big]^p \label{intwJ}
 \\
& \lesssim  \sum_{j>N}
2^{(N-j)dp} \!\!\!
\sum_{I'\in\sD_N(I)}\sum_{J\in\sD_{j+1}[\partial I']}
\mint_{\om(J)}|\fM^*_j(\La_jf)|^p
\nonumber
\\
 & \lesssim  
2^{Ndp}\sum_{I'\in\sD_N(I)}\sum_{j>N}\sum_{J\in\sD_{j+1}[\partial I']}
\int_{\om(J)}\sup_{\ell>N}|2^{\ell(\frac dp-d)}\fM^*_\ell(\La_\ell f)|^p.\nonumber
\end{align}
By Lemma \ref{omegacubelemma} the sets  $\om(J)$ for $J\in \sD[\partial I']$
are disjoint. Also since $\#\sD_N[I]=2^d$ 
we obtain
\[
A_k(x)^p\, \lesssim\, 2^{Ndp}\,\int_{I^{**}}\sup_{\ell>N}\big|2^{\ell(\frac dp-d)}\fM^*_\ell(\La_\ell f)\big|^p,
\]
with $I^{**}$ the five-fold dilation of $I$  with respect to $c_I$.
Thus, if we  write  \Be
G= \sup_{\ell>N}|2^{\ell(\frac dp-d)}\fM^*_\ell(\La_\ell f)|,
\label{Gdef}
\Ee 
we obtain
\Beas 
\| A_k\|_p^p & \lesssim & 2^{Ndp}\,\sum_{I\in\sD_N}|I\cap\cU_{N,k}|\,\int_{I^{**}}|G|^p\\
& \lesssim & 2^{N(d-\frac{d-1}p)p}\, 2^{-k}\,\|G\|^p_{L^p(\SR^d)},
\Eeas
and therefore
\Be
2^{k(\frac dp-d)}\| A_k\|_p  \lesssim   2^{-(k-N)(d-\frac{d-1}p)}\,\|G\|_p.
\label{AkG}
\Ee
When $p>(d-1)/d$ we can sum in $k>N$, and hence the left hand side of \eqref{jgeNkleN-ple1}
is controlled by $\|G\|_p$.
Now, Peetre's inequalities imply that 
\[
\fM^*_\ell[\La_\ell f](x)\lesssim \fM^{**}_{A,\ell}[\La_\ell f](x)\lesssim M_\sigma[\La_\ell f](x),\]
for $\sigma=d/A$, where 
\[
M_\sigma g(x)=\sup_{R>0}\Big[\mint_{B_R(x)}|g|^\sigma \Big]^{1/\sigma}.
\]
Thus, 
\Bea
\|G\|_p & \lesssim & \Big\|\sup_{\ell >N} 2^{\ell (\frac dp-d)}
M_\sigma[\La_\ell  f]\Big\|_p \leq  \Big\|
M_\sigma\Big[\sup_{\ell > N} 2^{\ell (\frac dp-d)}|\La_\ell  f|\Big]\Big\|_p\,\nonumber \\
& \lesssim &    \Big\|
\sup_{\ell > N} 2^{\ell (\frac dp-d)}|\La_\ell  f|\Big\|_p\,
\lc\, \|f\|_{F^{\frac dp-d}_{p,\infty}}\label{HLmaximal}
\Eea
using the boundedness of the Hardy-Littlewood maximal operator, since $\sigma=A/d<p$.  
This finishes the proof of \eqref{jkgeN-ple1}, for the stated version involving $\bbE_N$. The analogous version for $T_N[\cdot,\fa]$ follows similarly, by replacing 
\eqref{ENjmax} with the corresponding version for the $T_N$, as in \cite{gsu}.
\end{proof}

\begin{proof}[Proof of \eqref{jgeNkleN-ple1}]
Let $j>N$ and $k\leq N$. Again, we shall follow the proof of \cite[(26)]{gsu},
applying the same changes as in the previous subsection. Namely, let
$$\widetilde f_j= (\La_j f)\bbone_{\cU_{N,j}}$$
and note that $\bbE_N [L_j \La_j f]=\bbE_N[L_j \widetilde f_j]$; see \cite[Lemma 2.3]{gsu}.
Then we write
 \Be
	L_k\bbE_N[L_j f] 
   = L_k(\bbE_N[L_j \widetilde f_j] -L_j \widetilde f_j) + L_kL_j \widetilde f_j.
	\label{jN_eq}\Ee
	As in \cite{gsu}, the last term is harmless since 
	\[
	\|L_kL_j\widetilde f_j\|_p\lesssim 2^{-(M-A)|j-k|}\,\big\|\fM^{**}_{A,j}(\La_j f)(x)\big\|_p\lesssim 2^{-(M-A)|j-k|}\,\|\La_j f\|_p,
	\]
by \cite[Lemma 2.2]{gsu}. Thus, assuming $r\le p$ (as we may), and using the $r$-triangle inequality, 
\begin{align*}&\Big(\sum_{k=0}^N 2^{k(\frac dp-d)r} \|\sum_{j>N} L_k L_j\widetilde f_j\|_p^r\Big)^{1/r}\,\lesssim\,
	\\&
	\Big(\sum_{k=0}^N 
	\sum_{j>N}
	2^{(k-j)(\frac dp-d+(M-A))r} 2^{j(\frac dp-d)r} \|\La_j f\|_p
	^r\Big)^{1/r} \lc \|f\|_{B^{\frac dp-d}_{p,\infty}}\,.
	\end{align*}
	
Hence it remains to prove
\Be\label{jgeNkleNperp-ple1}
\Big(\sum_{k=0}^N 2^{k(d/p-d)r}\Big \|\sum_{j>N}
L_k \bbE_N^\perp  L_j \widetilde f_j\Big\|_p^r\Big)^{1/r}
\lc \|f\|_{F^{\frac dp-d}_{p,\infty}}.
\Ee
Following \cite{gsu}, and letting  $\cZ_{k,N}(x)$ be as in \eqref{Lax}, we write
\Beas
\cA_{j,k}(x) & := & |L_k\big(  \bbE_N^\perp[ L_j\widetilde f_j]\big)(x)| \\
& \leq &  \sum_{\mu\in\cZ_{k,N}(x)}\Big| \int_{I_{N,\mu} }   \big(\beta_k(x-y) -\beta_k(x-2^{-N}\mu)\big)\,
 \SE_N^\perp[ L_j\widetilde f_j](y)\, dy\,\Big|\\
& \lc & 2^{kd} 2^{k-N} \sum_{\mu\in\cZ_{k,N}(x)} \int_{I_{N,\mu} } \Big( |\SE_N[ L_j\widetilde f_j](y)|+|L_j\widetilde f_j(y)|\Big)\, dy.
\Eeas
In \cite[p. 1332]{gsu}, the terms corresponding to the two summands in the integral are estimated differently, but produce essentially the same outcome, namely
\Be
\cA_{j,k}(x)\, \lesssim \,2^{k-N}2^{(k-j)d}\Big(\sum_{\mu\in\cZ_{k,N}(x)}	
\sum_{J\in\sD_{j+1}[\partial I_{N,\mu}]}\|\La_jf\|_{L^\infty(J)}^p\Big)^\frac1p,
\label{A_aux1}
\Ee
see \cite[(41)]{gsu}. At this point we argue as in the previous subsection. That is, we use  \eqref{LinftyJ} to have 
\Be
\|\La_j f\|_{L^\infty(J)}\leq \Big[
\mint_{\om(J)} \fM^*_j[\La_j f](y)^p\,dy\Big]^{\frac1p},
\label{intwJ2}
\Ee
and conclude that
\begin{align}
\cA_k(x)^p & :=  \Big[\sum_{j>N}|L_k\SE^\perp _NL_j\widetilde f_j(x)|\Big]^p \label{defofcAk}\\
& \lesssim  \sum_{j>N}
2^{(k-N)p} 2^{(k-j)dp}
\sum_{\mu\in\cZ_{k,N}(x)}
\sum_{J\in\sD_{j+1}[\partial I_{N,\mu}]}
\mint_{\om(J)}|\fM^*_j(\La_jf)|^p
\notag\\
 & \lesssim 
2^{(k-N)p} 2^{kdp}\sum_{\mu\in\cZ_{k,N}(x)}
\int_{I^{**}_{N,\mu}} |G|^p 
\notag
\end{align}
with $G$ as in \eqref{Gdef}, and using the disjointness of the sets $\om(J)$ as before.
Thus, integrating the above expression 
\Beas 
\| \cA_k\|_p^p & \lesssim & 2^{(k-N)p} 2^{kdp}\,\sum_{\mu\in\SZ^d}
2^{-kd}
\int_{I^{**}_{N,\mu}} |G|^p\\
& \lesssim & 2^{(k-N)p} 2^{k(d-\frac dp)p} \,
\int_{\SR^d} |G|^p\,
\Eeas
and therefore
\Be
2^{k(\frac dp-d)}\| \cA_k\|_p  \lesssim   2^{k-N}\,\|G\|_p.
\label{cAkG}
\Ee
Therefore, one can sum in $k\leq N$, and obtain the desired expression in \eqref{jgeNkleNperp-ple1} using the estimate for $\|G\|_p$ in \eqref{HLmaximal}.
This finishes the proof of \eqref{jgeNkleN-ple1}. The corresponding version for $T_N$ is proved similarly (notice that in 
\eqref{jN_eq} the analysis of the last summand becomes unnecessary, due to the additional cancellation of $T_N$).
\end{proof}

\section{Proof of Theorem \ref{expthm}: The case $p=\frac{d}{d+1}$}\label{S_expthmendpoint}

The end-point case $p=d/(d+1)$ and $s=1$, was excluded from the previous proofs because of the 
restrictions imposed in Propositions \ref{Ps1} and \ref{firsttwosums}. However, one can use instead Propositions \ref{Finftyprop} and \ref{j<Nprop}, 
which are valid 
at this endpoint. Namely, they imply the inequalities
\begin{align}
\label{jkgeN-ple1-s=1}
&\sup_N\,\Big(\sum_{k=0}^\infty 2^{kr}\Big \|\sum_{j>N}
L_k \bbE_N L_j \La_j f\Big\|_p^r\Big)^{1/r} \lc \|f\|_{F^{1}_{p,\infty}},
\quad p=\frac{d}{d+1},
\end{align}
and 
\Be\label{j<N-s=1}
\sup_N 
\Big(\sum_{k=0}^\infty 2^{kr} \Big\|   L_k  
\bbE_N^\perp\varPi_N f
\Big\|_{p}^r\Big)^{1/r} \lc  \|f\|_{F^1_{p,2}} 
\quad p=\frac{d}{d+1}.
\Ee 
Then, the result stated in  Theorem \ref{expthm} follows using additionally the embedding $F^1_{p,2}\hookrightarrow F^1_{p,\infty}$ in \eqref{jkgeN-ple1-s=1}. 
The same argument applies to $T_N[\cdot,\fa]$ with $\|\fa\|_\infty \le 1$ if we use the corresponding versions of Propositions \ref{Finftyprop} and \ref{j<Nprop}.
\qed

\section{Proof of Theorem \ref{th_s0}: the case $s=0$ and $p=\infty$}
\label{S_s0}

In view of \eqref{EN-PiN}, it suffices to prove the following.
\begin{proposition}
\label{P_s0}
Let $r>0$. Then
\Be
\big\|\SE_N^\perp\varPi_N f\big\|_{F^0_{\infty,r}}\,+\,\big\|\SE_N\varPi_N^\perp f\big\|_{F^0_{\infty,r}}  \, \lesssim \,  \|f\|_{B^0_{\infty,\infty}}.
\label{s0a}
\Ee
\end{proposition}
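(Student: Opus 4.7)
By the decomposition \eqref{EN-PiN}, it suffices to establish
\[
\|\SE_N^\perp\varPi_Nf\|_{F^0_{\infty,r}} + \|\SE_N\varPi_N^\perp f\|_{F^0_{\infty,r}}\lesssim \|f\|_{B^0_{\infty,\infty}}
\]
uniformly in $N$; by the Frazier--Jawerth characterization \eqref{Finfty}, this reduces to controlling, for every $I\in\sD_n$ and $n\geq 0$,
\[
\frac{1}{|I|}\int_I\sum_{k\geq n}|L_k g(x)|^r\,dx,\qquad g\in\{\SE_N^\perp\varPi_Nf,\;\SE_N\varPi_N^\perp f\},
\]
by $\|f\|_{B^0_{\infty,\infty}}^r$. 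The plan is to split the sum in $k$ at $k=N$, deriving pointwise $L^\infty$ bounds on $|L_k g|$ with $2^{-|k-N|}$-type decay from cancellation when $k\leq N$, and using the support restriction \eqref{gsu_L1} of $L_k\SE_N h$ to the thin grid-neighborhood $\cU_{N,k}$ when $k>N$.

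For $g=\SE_N^\perp\varPi_Nf$ I would adapt Lemmas \ref{LkENPiNklarge}--\ref{LkENPiNksmall} to $p=\infty$. The input is $\|2^{-N}\nabla\varPi_Nf\|_\infty\lesssim\sum_{j\le N}2^{j-N}\|\La_jf\|_\infty\lesssim\|f\|_{B^0_{\infty,\infty}}$, which via Lemma \ref{ENperpPiN} gives $\|\SE_N^\perp\varPi_Nf\|_\infty\lesssim\|f\|_{B^0_{\infty,\infty}}$. For $k\leq N$ the cancellation $\int_{I_{N,\mu}}\SE_N^\perp\varPi_Nf=0$ permits inserting $\beta_k(x-2^{-N}\mu)$ and exploiting the Lipschitz bound on $\beta_k$, yielding $\|L_k\SE_N^\perp\varPi_Nf\|_\infty\lesssim 2^{-(N-k)}\|f\|_{B^0_{\infty,\infty}}$. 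For $k>N$, the $M$ vanishing moments of $\beta_k$ give $\|L_k\varPi_Nf\|_\infty\lesssim 2^{-(k-N)(M-A)}\|f\|_{B^0_{\infty,\infty}}$ and hence, combined with \eqref{gsu_L1},
\[
|L_k\SE_N^\perp\varPi_Nf(x)|\lesssim \|f\|_{B^0_{\infty,\infty}}\bigl(2^{-(k-N)(M-A)}+\bbone_{\cU_{N,k}}(x)\bigr).
\]
For $g=\SE_N\varPi_N^\perp f$, the bound \eqref{ENjmax} together with $\|\La_jf\|_\infty\lesssim\|f\|_{B^0_{\infty,\infty}}$ and $\#\sD_{j+1}[\partial I]\lesssim 2^{(j-N)(d-1)}$ yields $\|\SE_NL_j\La_jf\|_\infty\lesssim 2^{-(j-N)}\|f\|_{B^0_{\infty,\infty}}$, so $\|\SE_N\varPi_N^\perp f\|_\infty\lesssim\|f\|_{B^0_{\infty,\infty}}$. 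For $k>N$ the support restriction gives $|L_k\SE_N\varPi_N^\perp f|\lesssim\bbone_{\cU_{N,k}}\|f\|_{B^0_{\infty,\infty}}$; for $k\leq N$ I would decompose via the tower property $\SE_k\SE_N=\SE_k$ as
\[
\SE_N\varPi_N^\perp f = \SE_k\varPi_N^\perp f+(\SE_N-\SE_k)\varPi_N^\perp f,
\]
bounding the first summand in $L^\infty$ by $2^{k-N}\|f\|_{B^0_{\infty,\infty}}$ (via the mean estimate $|\int_Q\varPi_N^\perp f|\lesssim 2^{-N}|\partial Q|\cdot\|f\|_{B^0_{\infty,\infty}}$ for $Q\in\sD_k$, which comes from summing the boundary bound on $\int_QL_j\La_jf$ in $j>N$), and the second via a Lemma \ref{LkENPiNksmall}-style cancellation argument with $\sD_k$- in place of $\sD_N$-cubes together with the Haar-series structure of $(\SE_N-\SE_k)\varPi_N^\perp f$.

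Finally, the sum $\sum_{k\geq n}\tfrac1{|I|}\int_I|L_kg|^r\,dx$ is bounded by $\|f\|_{B^0_{\infty,\infty}}^r$ by splitting at $k=N$: the $k\leq N$ sum converges geometrically from the $2^{-|N-k|}$-type decay, while the $k>N$ sum uses both the $2^{-(k-N)(M-A)}$-decay and the uniform measure estimate $\sum_{k\geq n}|I\cap\cU_{N,k}|/|I|\lesssim 1$, which follows by a case-split on $n\leq N$ versus $n>N$ and the tubular structure of $\cU_{N,k}$ as a $2^{-k-1}$-neighborhood of the $\sD_N$-grid. The $T_N[\cdot,\fa]$-version of \eqref{s0a} follows by identical arguments using the $T_N$-analogs of \eqref{ENjmax} and the cancellation identities from \cite{gsu,gsu-endpt}. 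The main technical obstacle is obtaining the $k\leq N$ decay for $L_k\SE_N\varPi_N^\perp f$: this requires both the tower-property decomposition and a careful control of $L_k(\SE_N-\SE_k)\varPi_N^\perp f$ using the Haar-series expansion and the adaptation of the cancellation arguments in Lemma \ref{LkENPiNksmall} to the $\sD_k$-scale.
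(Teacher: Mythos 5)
Your overall architecture is the same as the paper's: reduce via \eqref{Finfty} to averages over $I\in\sD_n$, split the frequency sum at $k=N$, use uniform $L^\infty$ bounds with decay for $k\le N$, and for $k>N$ exploit \eqref{gsu_L1} together with the measure bound $|I\cap\cU_{N,k}|/|I|\lesssim 2^{\min(n,N)-k}$ (the paper stresses exactly this point: for $k>N$ only $r=\infty$ bounds hold in $L^\infty$, so the averaging in the $F^0_{\infty,r}$-norm is what saves the day). Your treatment of $\SE_N^\perp\varPi_N f$ is complete and correct: the gradient bound $\|2^{-N}\nabla\varPi_Nf\|_\infty\lesssim\|f\|_{B^0_{\infty,\infty}}$ plus Lemma \ref{ENperpPiN} gives the uniform $L^\infty$ bound (this is \eqref{las1}--\eqref{las2} in the paper), your $k\le N$ cancellation argument over $\sD_N$-cubes yields $2^{k-N}$-decay and is a legitimate self-contained substitute for the paper's citation of \eqref{36a}, and your $k>N$ bound correctly retains the $2^{-(k-N)(M-A)}$ tail coming from $L_k\varPi_Nf$. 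The uniform $L^\infty$ bound $\|\SE_N\varPi_N^\perp f\|_\infty\lesssim\|f\|_{B^0_{\infty,\infty}}$ via \eqref{ENjmax} and the count $\#\sD_{j+1}[\partial I]\approx 2^{(j-N)(d-1)}$ is also fine, as is the $k>N$ regime for this operator.

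The genuine gap is the $k\le N$ estimate for $L_k\SE_N\varPi_N^\perp f$, which you yourself flag as the main obstacle; in the paper this is precisely the quoted inequality \eqref{37a} from \cite{gsu-endpt}, so the paper never has to prove it here. Your tower-property split is sensible, and the piece $\SE_k\varPi_N^\perp f$ is handled correctly (the boundary estimate for $\int_Q L_j\La_jf$, $Q\in\sD_k$, $j>N$, gives $\|\SE_k\varPi_N^\perp f\|_\infty\lesssim 2^{k-N}\|f\|_{B^0_{\infty,\infty}}$). But for $(\SE_N-\SE_k)\varPi_N^\perp f$ the proposed ``Lemma \ref{LkENPiNksmall}-style cancellation at the $\sD_k$-scale'' cannot produce what you need: the gain in that lemma comes from the oscillation of $\beta_k$ across cubes of the much finer scale $2^{-N}$, i.e.\ the factor $2^{kd}2^{k-N}$, whereas across $\sD_k$-cubes the oscillation of $\beta_k$ is only $O(2^{kd})$, so that argument returns $O(\|f\|_{B^0_{\infty,\infty}})$ with no decay in $N-k$ --- and decay (or at least $\ell^r$-summability over $n\le k\le N$) is indispensable, since for $I\in\sD_n$ with $n\le N$ you sum roughly $N-n$ such terms. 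The missing ingredient is the smallness of the coarse-scale Haar coefficients of $\varPi_N^\perp f$: for $k\le m<N$ one has $2^{md}|\langle \varPi_N^\perp f, h^\e_{m,\nu}\rangle|\lesssim 2^{m-N}\|f\|_{B^0_{\infty,\infty}}$ (sum over $j>N$ the bound $|\langle L_j\La_jf,h^\e_{m,\nu}\rangle|\lesssim 2^{-j}2^{-m(d-1)}\|\La_jf\|_\infty$, which uses $\int\beta=0$ and the jump set of $h^\e_{m,\nu}$), and combining this with $|\beta_k*h^\e_{m,\nu}|\lesssim 2^{(k-m)(d+1)}$ and the count $\approx 2^{(m-k)d}$ of cubes $\nu$ meeting $\supp\beta_k(x-\cdot)$ gives $\|L_k(\SE_{m+1}-\SE_m)\varPi_N^\perp f\|_\infty\lesssim 2^{k-m}\,2^{m-N}\|f\|_{B^0_{\infty,\infty}}$, hence $\|L_k(\SE_N-\SE_k)\varPi_N^\perp f\|_\infty\lesssim (N-k)2^{k-N}\|f\|_{B^0_{\infty,\infty}}$, which suffices. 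So the decomposition can be completed, but the decisive estimate is not the $\sD_k$-scale cancellation you invoke; alternatively, simply quote \eqref{36a}--\eqref{37a} as the paper does and reserve your argument for the $k>N$ range.
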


One part of the estimates will be derived from
the following inequalities, proved in \cite[(36a), (37a)]{gsu-endpt}:
\Bea
\Big(\sum_{k\leq N}\big\|L_k\SE^\perp_N\varPi_Nf\big\|_\infty^r\Big)^{1/r} & \lesssim & \|f\|_{B^0_{\infty,\infty}}
\label{36a}\\
\Big(\sum_{k\leq N}\big\|L_k\SE_N\Pi^\perp_Nf\big\|_\infty^r\Big)^{1/r} & \lesssim & \|f\|_{B^0_{\infty,\infty}}
\label{37a}.
\Eea
We remark that these same inequalities with $\sum_{k\leq N}$ replaced by  $\sum_{k> N}$ are only true if $r=\infty$.
This necessitates 
the use of $F_{\infty,r}^0$-norms on the left  hand side of \eqref{s0a}

To establish the proposition, let $f\in B^0_{\infty,\infty}$ be such that $\|f\|_{B^0_{\infty,\infty}}=1$.
We shall prove separately each of the two inequalities. 

\subsection{\it Proof $\big\|\SE_N^\perp\varPi_N f\big\|_{F^0_{\infty,r}}\lesssim 1$}
By \eqref{Finfty} we can write 
\begin{align*}
\big\|\SE_N^\perp\varPi_N f\big\|_{F^0_{\infty,r}}^r\,&=\,\sup_{\ell\geq0}\sup_{I\in\sD_\ell} A^{(\ell)}_I
\\
\quad \text{where }
A^{(\ell)}_I&
:=\mint_I\sum_{k\geq\ell}|L_k\SE^\perp_N\varPi_N f|^r.
\end{align*}
If $0\leq \ell\leq N$, then, for each $I\in\sD_\ell$,
\Be
A^{(\ell)}_I\leq\sum_{k=\ell}^N\big\|L_k\SE^\perp_N\varPi_Nf\|_\infty^r\,+\,
\mint_I\sum_{k> N}\big|L_k\SE^\perp_N \varPi_Nf\big|^r=:A^{(\ell)}_{I,1}+A^{(\ell)}_{I,2}.
\label{Aell_aux}
\Ee
By \eqref{36a} we have $A^{(\ell)}_{I,1}\lesssim 1$. For the second term, one can split $I$ into $ 2^{(\ell-N)d}$ disjoint cubes $J\in\sD_{N}$, so that
\[
A^{(\ell)}_{I,2}\,\leq\, \sup_{{J\in\sD_{N}}\atop{J\subset I}} A^{(N)}_{J} .
\]
Thus, it suffices to show that
\Be
\sup_{\ell\geq N} A^{(\ell)}_{I}\,\lesssim\, 1.
\label{goal2}
\Ee
Let $\ell\geq N$ and $I\in\sD_\ell$. Then \eqref{gsu_L1} gives $L_k(\SE_N g)\equiv0$ in $\cU_{N,k}^\complement$, if $k\geq \ell$, so
\[
 A^{(\ell)}_{I}=\frac1{|I|}\sum_{k\geq \ell}\int_{I\cap \cU_{N,k}}\big|L_k\SE^\perp_N\varPi_Nf\big|^r.
\]
We shall show that
\Be
\label{st1}
\big|L_k\SE^\perp_N\varPi_Nf(x)\big|\lesssim \|f\|_{B^0_{\infty,\infty}}=1,\quad \text{for }x\in I.
\Ee
This inequality combined with $|I\cap \cU_{N,k}|\approx 2^{-(d-1)\ell}2^{-k}$ will establish the result, since
\[
 A^{(\ell)}_{I}\lesssim \frac1{|I|}\sum_{k\geq \ell}|I\cap \cU_{N,k}|\lesssim \sum_{k\geq \ell}2^{\ell-k}\lesssim1 .
\]
It remains to show \eqref{st1}. Let $Q\in\sD_N$ be such that $I\subset Q$. By \eqref{L1_aux} 
\Be
\big|\SE_N^\perp\varPi_N f(y)\big|\lesssim\mint_{Q^{**}}\fM^{**}_{A,N}\big(2^{-N}\varPi_N\nabla f\big),\quad \text{for }y\in I^*.
\label{las1}
\Ee
Now,
\Bea
\Big\|\fM^{**}_{A,N}\big(2^{-N}\varPi_N\nabla f\big)\Big\|_\infty & \leq & \big\|2^{-N}\varPi_N\nabla f\big\|_\infty\, \lesssim \, \sum_{j\leq N} 2^{j-N}\|\La_j f\|_\infty
\nonumber
\\
& \lesssim & \sup_{m\geq 0}\|\La_m f\|_\infty\lesssim\|f\|_{B^0_{\infty,\infty}}= 1.
\label{las2}
\Eea
So, if $x\in I$, then $\supp\beta_k(x-\cdot)\subset x+O(2^{-k})\subset I^*$, and using \eqref{las1} and \eqref{las2} one deduces \eqref{st1}.
This completes the proof of \eqref{goal2}.

\subsection{\it Proof $\big\|\SE_N\varPi_N^\perp f\big\|_{F^0_{\infty,r}}\lesssim 1$}
We now must bound 
\begin{align*}
\big\|\SE_N\varPi_N^\perp f\big\|_{F^0_{\infty,r}}^r&=\,\sup_{\ell\geq0}\sup_{I\in\sD_\ell} B^{(\ell)}_I
\\
\text{where }
B^{(\ell)}_I
&=\mint_I\sum_{k\geq\ell}|L_k\SE_N\varPi_N^\perp f|^r.
\end{align*}
The cases $0\leq \ell\leq N$ are handled with the same argument as in \eqref{Aell_aux},
this time using the inequality \eqref{37a}. 
If $\ell\geq N$ and  $I\in\sD_\ell$ we shall use
\[
 B^{(\ell)}_{I}\leq \frac1{|I|}\sum_{k\geq \ell}\,|I\cap \cU_{N,k}|\;\big\|L_k\SE_N\varPi_N^\perp f\big\|_\infty^r,
\]
so that it will suffice to show
\Be
\label{st1b}
\big\|L_k\SE_N\varPi_N^\perp f\big\|_\infty\lesssim \|f\|_{B^0_{\infty,\infty}}=1.
\Ee
If $Q\in\sD_N$, then  \eqref{ENjmax} and the argument in \eqref{intwJ} (with $\omega(J)$ as in \S\ref{omegacubesect}) implies
\Bea
\big|\SE_N L_j\La_j f(y)\big| & \lesssim &  2^{(N-j)d}\sum_{J\in\sD_{j+1}(\partial Q)} \|\La_j f\|_{L^\infty(J)}\label{2Nj} \\
& \lesssim &  2^{(N-j)d}\sum_{J\in\sD_{j+1}(\partial Q)} \mint_{\om(J)}\fM^{*}_{j}\big(\La_j f\big),\quad y\in Q,\nonumber
\Eea
using in the last step \eqref{LinftyJ}. So, summing up in $j>N$ and using the disjointness properties of the sets $\omega(J)$ we obtain
\Beas
\sum_{j>N}\big|\SE_N L_j\La_j f(y)\big|  & \lesssim &    2^{Nd} \int_{Q^{**}}\sup_{m\geq 0}\fM^{*}_{m}\big(\La_m f\big)\\
& \lesssim & \sup_{m\geq 0}\|\La_m f\|_\infty\lesssim\|f\|_{B^0_{\infty,\infty}}= 1.
\Eeas
Finally, taking the convolution with $\beta_k$ one easily deduces \eqref{st1b}.
This completes the proof of \eqref{s0a}. The corresponding version for the $T_N[\cdot,\fa]$ is proved similarly. Thus the proof of Proposition \ref{P_s0} is complete,  and so is the proof of  Theorem \ref{th_s0}.

\begin{remark}
\label{rem_conv_s0}
The above proof also shows that, if $f\in B^0_{\infty,\infty}$, and $N$ is fixed, then the series
$\sum_{j=0}^\infty \SE_N(L_j\La_j f)$ 
converges in the norm of $F^0_{\infty, r}$, for all $r>0$. This is a consequence of the crude bound
\[
\textstyle\|\sum_{j=J_1}^{J_2}L_j\La_j f\|_{F^0_{\infty,r}}\lesssim_N\, 2^{-J_1}\,\|f\|_{F^0_{\infty,\infty}},
\]
which can be obtained from \eqref{2Nj}; see also \cite[Remark 4.5]{gsu-endpt}. 

\end{remark}

 \section{Proof of Theorem \ref{schauderF}: Necessary conditions for $s=1$}
\label{S_s=1}

Here we show the assertion in Theorem \ref{schauderF}, which corresponds to 
the optimality of the range of $q$ stated in (ii) of Theorem \ref{th3}.
More precisely, we establish the following.

\begin{theorem} \label{th_NC_s1} 
Let $d/(d+1)\leq p<1$ and $2<q\leq\infty$. Then 
\Be
\|\SE_N \|_{F^1_{p,q}\to F^1_{p,q}}\,\approx \, N^{\frac12-\frac1q}.
\label{normENs1}
\Ee
Moreover, for every $N\geq1$ there exists $g_N\in C^\infty_c((0,1)^d)$ such that 
\Be
\|g_N\|_{F^1_{p,q}}\leq1 \mand
\|\SE_N(g_N) \|_{F^1_{p,\infty}}\,\gtrsim \, N^{\frac12-\frac1q}\;.\label{limsup_PN}
\Ee
\end{theorem}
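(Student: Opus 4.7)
The plan is to split $f=\varPi_N f+\varPi_N^\perp f$ and handle each piece using Propositions~\ref{Ps1} and~\ref{j<Nprop}. For the high-frequency piece, Proposition~\ref{Ps1} with $r=q$ yields
\[
\|\SE_N\varPi_N^\perp f\|_{B^1_{p,q}}\lesssim\|\varPi_N^\perp f\|_{B^1_{p,\infty}}\lesssim\|f\|_{F^1_{p,q}},
\]
using the standard embedding $F^1_{p,q}\hookrightarrow B^1_{p,\infty}$ (which is immediate from $\ell^\infty\hookrightarrow \ell^q$ in the $\La_k$-quasi-norm). Since $q>2>p$, the embedding $B^1_{p,q}\hookrightarrow F^1_{p,q}$ converts this to a uniform bound on the $F^1_{p,q}$-quasi-norm. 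For the low-frequency piece, write $\SE_N\varPi_N f=\varPi_N f-\SE_N^\perp\varPi_N f$; the first term is controlled by~\eqref{PiN}, and for the second, Proposition~\ref{j<Nprop} together with the Littlewood-Paley identity $\|\nabla\varPi_N f\|_{h^p}\approx\|\varPi_N f\|_{F^1_{p,2}}$ gives $\|\SE_N^\perp\varPi_N f\|_{B^1_{p,q}}\lesssim\|\varPi_N f\|_{F^1_{p,2}}$. Since $\varPi_N f$ is frequency-localized to $|\xi|\lesssim 2^N$, the $F^1_{p,2}$-quasi-norm is effectively a sum of $\sim N$ frequency pieces, and H\"older's inequality in the frequency index gives
\[
\|\varPi_N f\|_{F^1_{p,2}}\lesssim N^{\frac12-\frac1q}\|\varPi_N f\|_{F^1_{p,q}}\lesssim N^{\frac12-\frac1q}\|f\|_{F^1_{p,q}}.
\]
Combining everything yields $\|\SE_N\|_{F^1_{p,q}\to F^1_{p,q}}\lesssim N^{\frac12-\frac1q}$.

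\textbf{Lower bound.} For the sharpness, fix $\psi\in C^\infty_c((0,1)^d)$ equal to $1$ on a sub-cube of positive measure, and consider
\[
\tilde g_N(x)=\psi(x)\sum_{k=k_0}^N 2^{-k}\cos(2^k x_1),
\]
for $k_0$ sufficiently large that the Littlewood-Paley pieces separate cleanly; then set $g_N:=\tilde g_N/\|\tilde g_N\|_{F^1_{p,q}}\in C^\infty_c((0,1)^d)$. By lacunarity and almost-orthogonality, $\La_j\tilde g_N(x)\approx 2^{-j}\psi(x)\cos(2^j x_1)$ for $k_0\leq j\leq N$ (with rapidly decaying errors in $|j-k|$), whence
\[
\|\tilde g_N\|_{F^1_{p,q}}\lesssim\Big\|\psi(\cdot)\Big(\sum_{j=k_0}^N|\cos(2^j\cdot_1)|^q\Big)^{1/q}\Big\|_p\lesssim N^{\frac1q}\|\psi\|_p.
\]
Because each summand of $\tilde g_N$ varies on scales $\geq 2^{-N}$, the function $\SE_N\tilde g_N$ is essentially piecewise constant on $\sD_N$-cubes, and its jump across the face $\{x_1=\nu_1 2^{-N}\}$ is
\[
j_{\nu_1}(\hat x)\approx 2^{-N}\psi(\nu_1 2^{-N},\hat x)\,S_N(\nu_1 2^{-N}),\qquad S_N(y):=\sum_{k=k_0}^N\sin(2^k y),
\]
while jumps across faces perpendicular to the other coordinate axes are of the smaller order $2^{-N}|\nabla\psi|$. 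Zygmund's theorem on lacunary trigonometric series gives $\|S_N\|_{L^p(0,1)}\approx\sqrt N$ for all $0<p<\infty$. For $x$ at perpendicular distance $r\leq 2^{-N}$ from the face $\nu_1$, the cancellation $\int\beta=0$ yields the pointwise lower bound $\sup_k 2^k|L_k\SE_N\tilde g_N(x)|\gtrsim|j_{\nu_1}(\hat x)|/r$. Integrating the $p$-th power, and using $\int_0^{2^{-N}}r^{-p}dr\approx 2^{-N(1-p)}$ together with the sampling-type comparison $\sum_{\nu_1=1}^{2^N-1}|S_N(\nu_1 2^{-N})|^p\approx 2^N\int_0^1|S_N(y)|^p dy$ (valid since $S_N$ is a trigonometric polynomial of degree $2^N$), all powers of $2^N$ cancel and
\[
\|\SE_N\tilde g_N\|_{F^1_{p,\infty}}\gtrsim\sqrt N\|\psi\|_p.
\]
After normalization, $\|\SE_N g_N\|_{F^1_{p,\infty}}\gtrsim N^{\frac12-\frac1q}$, which together with the embedding $F^1_{p,q}\hookrightarrow F^1_{p,\infty}$ yields both the matching operator norm lower bound and the witness~\eqref{limsup_PN}.

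\textbf{Main obstacle.} The central technical challenge is the rigorous justification of the pointwise lower bound $\sup_k 2^k|L_k\SE_N\tilde g_N(x)|\gtrsim|j_{\nu_1}(\hat x)|/r$ near each $\sD_N$-face: although heuristically clear from the cancellation of $\beta$, one must isolate the contribution of the targeted jump from those of the many neighbouring faces and carefully bound the rapidly decaying but non-zero tails of $\beta_k$. A secondary difficulty is making the Plancherel-P\'olya-type Riemann sum comparison for $\sum_{\nu_1}|S_N(\nu_1 2^{-N})|^p$ precise when $p<1$, which can be handled by exploiting that $S_N$ is a trigonometric polynomial of degree $2^N$ sampled at $2^N$ equispaced nodes (together with standard Nikolski-type inequalities to transfer between $p\geq 1$ and $p<1$).
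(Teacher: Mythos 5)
Your upper bound follows the paper's own argument almost verbatim (same splitting $f=\varPi_Nf+\varPi_N^\perp f$, Propositions~\ref{Ps1} and~\ref{j<Nprop}, and the H\"older step $\|\varPi_Nf\|_{F^1_{p,2}}\lc N^{\frac12-\frac1q}\|f\|_{F^1_{p,q}}$), but two points need repair. First, the embedding you invoke is backwards: for $q>p$ one has $F^1_{p,q}\hookrightarrow B^1_{p,q}$, \emph{not} $B^1_{p,q}\hookrightarrow F^1_{p,q}$; the correct way back to the $F$-scale is to apply the propositions with a small parameter $r\le p$ (they hold for all $r>0$) and use $B^1_{p,r}\hookrightarrow F^1_{p,r}\hookrightarrow F^1_{p,q}$. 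Second, Proposition~\ref{Ps1} requires $p>d/(d+1)$, so your argument does not cover the endpoint $p=d/(d+1)$ included in the statement; the paper substitutes Proposition~\ref{Finftyprop} there. Both issues are fixable within your scheme, but as written the upper bound is incomplete at the endpoint and rests on a false embedding.

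The lower bound is where you genuinely diverge from the paper, and it is also where the substantive gaps lie: the two steps you yourself flag as ``the main obstacle'' and the ``secondary difficulty'' are exactly the mathematical content, and they are not established. Your deterministic sum uses frequencies all the way up to $2^N$, so the pointwise statement $\sup_k 2^k|L_k\SE_N\tilde g_N(x)|\gtrsim |j_{\nu_1}(\hat x)|/r$ must isolate one face from its neighbours and control the top-frequency contributions, none of which is carried out; moreover the tool you cite for the sampling step is inadequate, since Marcinkiewicz--Zygmund/Nikolskii comparisons fail in general at the critical rate (one sample per period of the top frequency --- e.g.\ $\sin(2\pi 2^Ny)$ sampled on $2^{-N}\bbZ$ vanishes identically), so the two-sided ``$\approx$'' you assert is not available as stated; a correct route would exploit that the oscillation of $S_N$ within a single cell is $O(1)\ll\sqrt N$, but that argument is absent. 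The paper avoids both difficulties by design: it takes Rademacher-randomized frequencies confined to $\fZ_N=\{N/4\le j\le N/2\}$, so that the single-scale computation of $\beta_N*(\SE_N^\perp\psi_j)$ in Lemma~\ref{BN} has error $O(2^{2(j-N)})$, negligible against the main term $2^{j-N}B(2^Nx-\mu)$; Khintchine's inequality then replaces your delicate deterministic sampling estimate, no $\sup_k$ over scales finer than $2^{-N}$ is needed, and the case $d\ge2$ is handled by tensorization with test functions of product type rather than by comparing jumps across faces in different directions. So your construction is plausible and could likely be pushed through, but as it stands its core estimates are conjectural, whereas the randomized low-frequency construction makes them elementary.
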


\subsection{\it Proof of Theorem \ref{th_NC_s1}: upper bounds}
Let $s=1$. Using Proposition \ref{Ps1} when $d/(d+1)<p<1$, or Proposition \ref{Finftyprop} when $p=d/(d+1)$, one has
the inequality
\[
\,\Big\|\big\{ 2^{kq}\sum_{j>N}
L_k \bbE_N L_j \La_j f\big\}_{k=0}^\infty\Big\|_{L^p(\ell^q)} \lc \|f\|_{F^{1}_{p,\infty}}.
\]
On the other hand, the proof of Proposition \ref{j<Nprop} gives 
\[
\,\Big\|\big\{ 2^{kq}
L_k \bbE^\perp_N \varPi_N f\big\}_{k=0}^\infty\Big\|_{L^p(\ell^q)} \lc \|\varPi_N f\|_{F^{1}_{p,2}},
\]
and by H\"older's inequality one has
\[
\|\varPi_N f\|_{F^{1}_{p,2}}\lesssim \Big\|\big(\sum_{j=0}^{N}|2^j\La_jf|^2\big)^\frac12\Big\|_p\lesssim N^{\frac12-\frac1q}\,\|f\|_{F^1_{p,q}}.
\]
Combining the above inequalities one obtains $\|\SE_N \|_{F^1_{p,q}\to F^1_{p,q}}\lesssim N^{\frac12-\frac1q}$.
\ProofEnd

\begin{remark}
If $1<s<1/p$
, the upper bound becomes exponential:
\[
\big\|\SE_N\big\|_{F^s_{p,q}\to F^s_{p,q}}\,\lesssim\,2^{(s-1)N},
\]  
for  $(d-1)/d<p<1$. This is a consequence of the simpler estimates for $\SE_N-\varPi_N:B^s_{p,\infty}\to B^s_{p,r}$
shown in \cite[Propositions 3.1 through 3.4]{gsu-endpt}. From \cite{gsu} we have also corresponding matching lower bounds, see the discussion  in \S\ref{S_th3} below.
\end{remark}

\subsection{\it Proof of Theorem \ref{th_NC_s1}: lower bounds}

To make the notation simpler, the counterexample is first presented in the 1-dimensional case,
and later extended to $\SR^d$ with a tensor product argument.

 \subsubsection{The case $d=1$} Consider, for $s>0$ and $\La\subset\SN$, a Weierstrass-type function
	\Be
	f(x)\,=\,\Big(\sum_{j\in \La} \frac{a_j}{2^{sj}}\, e^{2\pi i 2^j x}\Big)\,\psi(x), \quad x\in\SR,
	\label{fWe}\Ee
	with $\psi\in C^\infty_c(0,1)$, and say $\psi=1$ on $[1/4,3/4]$.
	These functions satisfy \Be
	\|f\|_{F^s_{p,q}(\SR)}\approx\Big\|(\sum_{k=0}^\infty|2^{ks} \beta_k*f|^q)^\frac1q\Big\|_p \lesssim \big(\sum_{j\in\La}|a_j|^q\big)^{1/q}.
	\label{alq}
	\Ee
This can for instance be proved from Hardy's inequalities and the following lemma

\begin{lemma}\label{bkj}
Let $\beta_k$ be as in $\S2$, and $\psi_j(x)=e^{2\pi i 2^jx}\psi(x)$. Then
\[
|\beta_k*\psi_j(x)|\lesssim 2^{-|j-k|M},\quad x\in\SR, \quad j,k=0,1,2,\ldots
\]
\end{lemma}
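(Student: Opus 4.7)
The plan is to split according to which of $\beta_k$ and $\psi_j$ carries the ``high frequency,'' and exploit that factor's cancellation or oscillation, respectively.

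\emph{Case $k\ge j$.} Here the scale of $\beta_k$ is finer than the oscillation of $\psi_j$. For $k\ge 1$ I would rescale to write
\[
\beta_k*\psi_j(x)=\int \beta(z)\,\psi_j\!\big(x-2^{-k}z\big)\,dz.
\]
Since $\beta$ has vanishing moments up to order $M$ by \eqref{moments-M}, I subtract the degree-$M$ Taylor polynomial of $z\mapsto \psi_j(x-2^{-k}z)$ at $z=0$; the polynomial part integrates to zero against $\beta$, leaving a remainder controlled by $2^{-k(M+1)}\|\psi_j^{(M+1)}\|_\infty \int|\beta(z)||z|^{M+1}\,dz$. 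Because $\psi_j(y)=e^{2\pi i 2^j y}\psi(y)$ with $\psi\in C^\infty_c$, Leibniz gives $\|\psi_j^{(M+1)}\|_\infty\lesssim 2^{j(M+1)}$, so $|\beta_k*\psi_j(x)|\lesssim 2^{-(k-j)(M+1)}$. The stray case $k=0$ forces $j=0$ and the bound is the trivial $\|\beta_0\|_1\|\psi\|_\infty$.

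\emph{Case $k<j$.} Now $\psi_j$ oscillates faster than the scale of $\beta_k$, so I would integrate by parts against the factor $e^{-2\pi i 2^j y}$. Writing
\[
\beta_k*\psi_j(x)=e^{2\pi i 2^j x}\int \beta_k(y)\,\psi(x-y)\,e^{-2\pi i 2^j y}\,dy,
\]
and integrating by parts $M+1$ times using $\frac{d}{dy}e^{-2\pi i 2^j y}=-2\pi i 2^j\,e^{-2\pi i 2^j y}$, one picks up a prefactor $(2\pi 2^j)^{-(M+1)}$ and moves $M+1$ derivatives onto the product $\beta_k(y)\psi(x-y)$; boundary terms vanish because $\beta_k$ is compactly supported. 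By Leibniz, the $L^1$ norm of the $(M+1)$-st derivative is controlled by $\sum_{\ell=0}^{M+1}\|\beta_k^{(\ell)}\|_1\|\psi^{(M+1-\ell)}\|_\infty\lesssim 2^{k(M+1)}$, since $\|\beta_k^{(\ell)}\|_1=2^{k\ell}\|\beta^{(\ell)}\|_1$. This yields $|\beta_k*\psi_j(x)|\lesssim 2^{-(j-k)(M+1)}$.

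Putting the two cases together gives the claim, in fact with exponent $M+1$ in place of $M$. I do not expect a real obstacle here; the argument is a standard cancellation-versus-smoothness trade-off of the Littlewood--Paley type, and the only mild nuisance is that $\beta_0$ lacks the vanishing-moment hypothesis, but it is only relevant in the trivial case $k=j=0$ which is absorbed by Young's inequality.
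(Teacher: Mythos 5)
Your proof is correct, and in the regime $k\ge j$ it is essentially the paper's argument: subtract the Taylor polynomial of $z\mapsto\psi_j(x-2^{-k}z)$ and kill it with the vanishing moments \eqref{moments-M}, then bound the remainder by $2^{-k}$-powers times $\|\psi_j^{(m)}\|_\infty\lesssim 2^{jm}$ (the paper stops at degree $M-1$ and gets $2^{(j-k)M}$; you go one order further and get $2^{(j-k)(M+1)}$, which is harmless since only $2^{-|j-k|M}$ is claimed). Where you genuinely diverge is the case $k<j$: the paper works on the frequency side, writing $\beta_k*\psi_j$ via Fourier inversion and exploiting the rapid decay of $\widehat\beta(\cdot/2^k)$ and of $\widehat\psi(\cdot-2^j)$, splitting the $\xi$-integral around $\xi=2^j$; you instead stay on the physical side and integrate by parts $M+1$ times against the oscillation $e^{-2\pi i 2^j y}$, paying $2^{-j(M+1)}$ and collecting $\|\beta_k^{(\ell)}\|_1\|\psi^{(M+1-\ell)}\|_\infty\lesssim 2^{k(M+1)}$ by Leibniz. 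Both routes are standard and give the stated bound; your non-stationary-phase argument is somewhat more elementary (it uses only compact support and smoothness of $\beta$ and $\psi$, no information about $\widehat\beta$, $\widehat\psi$ beyond what smoothness provides) and again yields the slightly better exponent $M+1$, while the paper's Fourier-side computation is shorter once one is willing to quote Schwartz decay of the transforms. Your handling of the only case where the vanishing moments are unavailable, $k=j=0$ via Young's inequality, matches the needs of the lemma; note also that for $k=0<j$ your integration by parts still applies since $\beta_0\in C^\infty_c$, even though $\beta_0$ is not a dilate of $\beta$ (the bound $\|\beta_0^{(\ell)}\|_1\lesssim 1$ suffices there).
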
	
\begin{proof}
If $k>j$, using that $\beta$ has $M$-vanishing moments,
\Bea
|\beta_k*\psi_j(x)|& = & \Big|\int_\SR\beta(y)\big[\psi_j(x-2^{-k}y)-\sum_{m=0}^{M-1}\psi_j^{(m)}(x)(-2^{-k}y)^m\big]\,dy\Big|\nonumber\\
& \lesssim & 2^{(j-k)M},\label{aux_kj1}
\Eea
since $\|\psi^{(M)}_j\|_\infty\lesssim 2^{jM}$. If $k\leq j$, then Fourier inversion gives, for any $M_1>1$, 
\begin{align*}
|\beta_k*\psi_j(x)| &=  \Big|\int_\SR\hat\beta(\xi/2^k)\,\hat\psi(\xi-2^j)\,e^{2\pi i x\xi}\,d\xi\Big|
\\&\, \lesssim \, \int_\SR\frac{d\xi}{(1+\frac{|\xi|}{2^k})^{M_1}\,(1+|\xi-2^j|)^{M_1}}\\
& \lesssim  \int_{|\xi-2^j|>2^{j-1}}\frac{2^{-jM_1}\,d\xi}{(1+\frac{|\xi|}{2^k})^{M_1}}+
 \int_{|\xi-2^j|\leq 2^{j-1}}\frac{2^{(k-j)M_1}\,d\xi}{(1+|\xi-2^j|)^{M_1}}\\
& \lesssim   2^k 2^{-jM_1}\,+\,2^{(k-j)M_1}
\end{align*}
and we get the  $O(2^{-|j-k|M})$ bound  if we choose $M_1\ge M$.
\end{proof}
	
We now let 	$s=1$ and $$\fZ_N=\{j\in \bbN: N/4\le j\le  N/2\},$$ and consider a randomized version of \eqref{fWe}, namely
\Be
\label{fNt}
f_N(x,t) = \sum_{j\in \fZ_N} \frac{r_j(t)}{2^j} \,
e^{2\pi i 2^j x}\, \psi(x),\Ee
	where $r_j:[0,1]\to \{-1,1\}$ is the sequence of Rademacher functions.
Then, by \eqref{alq},
\[
 \sup_{t\in[0,1]} \|f_N(\cdot, t)\|_{F^1_{p,q}} \lc N^{1/q}.
\]
Below  we shall show that
\Be\label{fWel} \Big(\int_0^1 \big\| \bbE_N[f_N(\cdot,t)]\big\|^p_{F^1_{p,\infty}}\,dt\Big)^{1/p} \ge c N^{1/2}.
 \Ee
The above inequality will be a consequence of the estimate
\Be
\Big(\int_0^1 \big\| 2^{N}\,\beta_N*(\bbE_N[f_N(\cdot,t)])\big\|^p_p\,dt\Big)^{1/p} \ge c N^{1/2},
\label{fWel2}
\Ee
where $\beta_N=2^{Nd}\beta(2^N\cdot)$ and $\beta$ is a suitable test function satisfying the conditions in \S2.1.
Thus for some $t_0\in[0,1]$  the function 
\Be
g_N\,=\, N^{-1/q}\,f_N(\cdot,t_0)
\label{gNfN}
\Ee will satisfy 
\Be
\|\SE_N g_N\|_{F^1_{p,q}(\SR)}\gtrsim \big\| 2^{N}\,\beta_N*(\bbE_N g_N)\big\|_{L^p(\SR)}\gtrsim N^{\frac12-\frac1q},
\label{fWel2bis}
\Ee
and in particular
 \Be
\big\|\bbE_N\big\|_{F^1_{p,q}\to F^1_{p,q}}\,\gtrsim\, N^{\frac12-\frac1q}.
\label{ENnorm}\Ee

By Fubini's theorem and Khintchine's inequality the expression in \eqref{fWel2} is equivalent to
 \Be
2^N\,\Big\| \big(\sum_{j\in\fZ_N}\big|2^{-j}\,\beta_N*(\bbE_N\psi_j)\big|^2\big)^\frac12\Big\|_{L^p(\SR)} \ge c N^{1/2}.
\label{fWel3}
\Ee
If the operator $\SE_N$ is omitted in the left hand side, then this quantity becomes uniformly bounded by Lemma \ref{bkj}, so \eqref{fWel3} is also equivalent to
 \Be
2^N\,\Big\| \big(\sum_{j\in\fZ_N}\big|2^{-j}\,\beta_N*(\bbE_N^\perp\psi_j)\big|^2\big)^\frac12\Big\|_{L^p(\SR)} \ge c N^{1/2}.
\label{fWel4}
\Ee
Below we fix $\beta$ such that $\supp \beta=(-1/8,1/8)$, and denote its primitive by $B(x)=\int_{-\infty}^x \beta(u)du$, which also belongs to $C^\infty_c(-1/8,1/8)$. The following lemma is similar to \cite[Lemma 6.4]{gsu-endpt}, but we include its proof below for completeness.

\begin{lemma}
\label{BN}
Let $\mu\in\SZ$ and let $\tI_{N,\mu}=[\frac{\mu}{2^N}, \frac{\mu+1/8}{2^N}]$. Then
\Be
\beta_N*(\SE^\perp_N\psi_j)(x)=-2^{-N}\psi_j'(\tfrac\mu{2^N})\,B(2^Nx-\mu)\,+\,O\big(2^{2(j-N)}\big),\quad x\in\tI_{N,\mu}.
\label{asy}
\Ee
Moreover, if $\frac\mu{2^N}\in[\frac14,\frac34]$, then $\psi_j'(\tfrac\mu{2^N})=2\pi i 2^j e^{2\pi i 2^j x}$.
\end{lemma}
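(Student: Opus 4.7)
The plan is to make a change of variables $u = 2^N(x-y)$ in the convolution, so that for $x\in\tI_{N,\mu}$,
$$\beta_N*(\SE_N^\perp\psi_j)(x) = \int_{-1/8}^{1/8}\beta(u)\,(\SE_N^\perp\psi_j)\bigl(x - u/2^N\bigr)\,du.$$
Since $\supp\beta\subset(-1/8,1/8)$ and $x\in\tI_{N,\mu}=[\mu/2^N,(\mu+1/8)/2^N]$, the variable $y = x - u/2^N$ sweeps only over $I_{N,\mu-1}\cup I_{N,\mu}$, so the first task is to obtain an explicit linear-plus-jump expression for $\SE_N^\perp\psi_j$ on these two cubes, with error controlled by $\|\psi_j''\|_\infty\lesssim 2^{2j}$ times $(2^{-N})^2$.

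I would Taylor-expand $\psi_j$ around $\mu/2^N$ to first order, giving
$$\psi_j(y) = \psi_j(\mu/2^N) + (y-\mu/2^N)\psi_j'(\mu/2^N) + O(2^{2(j-N)}),$$
and integrate term by term to compute
$$\SE_N\psi_j(y) = \psi_j(\mu/2^N) \pm 2^{-N-1}\psi_j'(\mu/2^N) + O(2^{2(j-N)}),$$
where the sign is $+$ on $I_{N,\mu}$ and $-$ on $I_{N,\mu-1}$. Subtracting these two expressions yields the key identity
$$\SE_N^\perp\psi_j(y) = 2^{-N}\psi_j'(\mu/2^N)\bigl[(2^N y - \mu) - \tfrac{1}{2}\sgn(2^N y - \mu)\bigr] + O(2^{2(j-N)}),$$
which is linear on each of the two cubes but carries a jump of size $2^{-N}\psi_j'(\mu/2^N)$ across $y=\mu/2^N$.

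Substituting this into the convolution and setting $\xi = 2^N x - \mu\in[0,1/8]$ reduces the main term to
$$2^{-N}\psi_j'(\mu/2^N)\int_{-1/8}^{1/8}\beta(u)\bigl[(\xi-u) - \tfrac{1}{2}\sgn(\xi-u)\bigr]\,du.$$
The affine part vanishes because $\beta$ has vanishing moments of orders $0$ and $1$. For the sign part, using $\int\beta=0$ together with $\supp\beta\subset(-1/8,1/8)$ and the definition of $B$ one computes
$$-\tfrac{1}{2}\int\beta(u)\sgn(\xi-u)\,du = -\int_{-1/8}^{\xi}\beta(u)\,du = -B(\xi),$$
which produces exactly the asserted main term $-2^{-N}\psi_j'(\mu/2^N)B(2^N x - \mu)$. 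The remaining assertion about $\psi_j'(\mu/2^N)$ when $\mu/2^N\in[1/4,3/4]$ follows immediately from $\psi\equiv 1$ and $\psi'\equiv 0$ on that interval, so that only the derivative of the oscillatory factor contributes. The only delicate step is correctly tracking the jump of $\SE_N^\perp\psi_j$ across the cube boundary at $\mu/2^N$; it is precisely this jump that forces the sign integral to collapse to the primitive $B$ rather than to zero.
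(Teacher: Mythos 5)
Your proof is correct and follows essentially the same route as the paper's: a first-order Taylor expansion of $\psi_j$ at $\mu 2^{-N}$ with $O(2^{2(j-N)})$ error, the vanishing zeroth and first moments of $\beta$ to dispose of the affine part, and the collapse of the remaining jump/sign contribution to the primitive $B(2^Nx-\mu)$. The only difference is organizational: you first record the pointwise sawtooth form of $\SE_N^\perp\psi_j$ on $I_{N,\mu-1}\cup I_{N,\mu}$ and then convolve with $\beta$, whereas the paper carries out the same expansion directly under the convolution integral.
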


Assuming the lemma, the $p$th power of the left hand side of \eqref{fWel4} can be bounded from below by 
\begin{align*}
 & \sum_{{\mu\in\SZ}\atop{\frac14\leq\frac\mu{2^N}\leq\frac34}}\int_{\tI_{N,\mu}}
\Big(\sum_{j\in\La}\big|2^{N-j}\,\beta_N*(\bbE_N^\perp\psi_j)\big|^2\Big)^\frac p2\,dx\\
& \geq  (2\pi)^p\sum_{{\mu\in\SZ}\atop{\frac14\leq\frac\mu{2^N}\leq\frac34}}\int_{\tI_{N,\mu}}
\Big(\sum_{j\in\La}\big|B(2^Nx-\mu)\big|^2\Big)^\frac p2-c\big(\sum_{j\in\La}|2^{j-N}|^2\big)^\frac p2
\,dx\\
& \gtrsim  N^\frac p2\,\sum_{{\mu\in\SZ}\atop{\frac14\leq\frac\mu{2^N}\leq\frac34}}\int_{\tI_{N,\mu}}
\big|B(2^Nx-\mu)\big|^p\,dx\;\;\;-\;c'\,2^{-Np/2}\\
& \gtrsim  N^\frac p2\,\int_{0}^{1/8}
\big|B(u)\big|^p\,du\;-\;c'\,2^{-Np/2}\;\gtrsim\; N^\frac p2,
\end{align*}
using in the last step that $\beta$ (hence $B$) is not identically null in $(0,1/8)$.  This finishes the proof modulo Lemma \ref{BN}.

\begin{proof}[Proof of Lemma \ref{BN}]
For simplicity we write $I^+=I_{N,\mu}$ and $I^-=I_{N,\mu-1}$. If $x\in\tI_{N,\mu}$, then $\supp \beta_N(x-\cdot)\subset I^+\cup I^-$, and thus
\Be
\beta_N*(\SE^\perp_N\psi_j)(x)=\sum_{\pm}\int_{I^\pm}\beta_N(x-y)\Big[\psi_j(y)-\mint_{I^\pm}\psi_j\Big]\,dy.
\label{aux_l1}\Ee
Now, if $y\in I^\pm$,  using the linear Taylor's expansion of $\psi_j$  around $y$ and the bound $\|\psi_j^{''}\|_{\infty}\lesssim 2^{2j}$, the inner bracketed expression becomes
\Beas
\mint_{I^\pm}(\psi_j(y)-\psi_j(w))dw & = & \mint_{I^\pm}\psi'_j(y)(y-w)\,dw + O(2^{2(j-N)})\\
 & = & \psi'_j(\tfrac\mu{2^N})\,\mint_{I^\pm}(y-w)\,dw + O(2^{2(j-N)})\\
& = & \psi'_j(\tfrac\mu{2^N})\,(y-c_{I^\pm})\, + O(2^{2(j-N)})\\
& = & \psi'_j(\tfrac\mu{2^N})\,(y-\tfrac\mu{2^N})\, -\psi'_j(\tfrac\mu{2^N})\,\tfrac{\pm1}{2^{N+1}}+ O(2^{2(j-N)}).
\Eeas
Putting these quantities into \eqref{aux_l1}, and using the support and the moment condition of $\beta$, we are left with
\Be
\beta_N*(\SE^\perp_N\psi_j)(x)=-\sum_{\pm}\psi'_j(\tfrac\mu{2^N})\,\tfrac{\pm1}{2^{N+1}}\int_{I_\pm}\beta_N(x-y)\,dy\;+ O(2^{2(j-N)}).
\label{aux2_l1}\Ee
Now, an elementary computation using the primitive, $B(u)$, of $\beta(u)$ shows that the two integrals substructed above can be written as
\[
\int_{\frac{\mu}{2^N}}^{\frac{\mu+1}{2^N}}\,-\,\int_{\frac{\mu-1}{2^N}}^{\frac{\mu}{2^N}} \beta_N(x-y)dy\quad =
\int_{\mu}^{\mu+1}-\!\int_{\mu-1}^{\mu} \beta(2^Nx-u)\,du=2B(2^Nx-\mu),
\]
since $B(2^Nx-\mu\pm1)=0$ by the support condition.
Thus, placing this expression into \eqref{aux2_l1} implies the asserted identity \eqref{asy}.
\end{proof}

\subsubsection{The $d$-dimensional case}\label{S_ExFd}
Consider $G_N(x_1,x')=g_N(x_1)\chi(x')$, where $g_N$ is the 1-dimensional function in \eqref{gNfN},
and $\chi\in C^\infty_c(0,1)^{d-1}$ with $\chi\equiv1$ in $[\frac18,\frac78]^{d-1}$.
We shall show that 
\Be
\|G_N\|_{F^1_{p,q}(\SR^d)}\lesssim 1
\label{GN1}
\Ee
and
\Be
\big\|\SE_NG_N\big\|_{F^1_{p,q}(\SR^d)}\gtrsim N^{\frac12-\frac1q}.
\label{GN2}
\Ee

To do so, in the definition of the $F^s_{p,q}$-quasinorms we shall use suitable test functions of tensor product type; see also \cite[\S5.1]{gsu-endpt} for a similar argument.
Namely, for a fixed $M\in\SN$  we consider a non-negative even function $\phi_0\in C^\infty_c(-\tfrac18, \tfrac 18)$ such that  
$\phi_0^{(2M)}(t)>0$ for all $t$ in some interval $[-2\e,2\eps]$.
Since $\widehat\phi_0(0)=\int\phi_0>0$, dilating if necessary we may also assume that $\widehat \phi_0\neq 0$ on $[-1,1]$.
Let $\varphi_0 \in C^\infty_c((-\tfrac 18, \tfrac 18)^{d-1})$ be 
such that $\widehat \varphi_0\neq 0$ on $[-1,1]^{d-1}$ and $\widehat \varphi_0(0)=1$.
For $M\ge 1$, let \[ \phi(t):= (\tfrac{d}{dt})^{2M} \phi_0(t), \qquad
\varphi (x_2,\dots, x_d):= \big(
\tfrac{\partial^2}{\partial x_2^2} +\dots+
\tfrac{\partial^2}{\partial x_d^2} \big)^M \varphi_0(x').
\] 
Then, we define 
\Be\Psi(x):= 
\Delta^M[ \phi_0\otimes \varphi_0] (x)= \phi(x_1) \varphi_0(x') + \phi_0(x_1) \varphi(x').
\label{Psitensor}
\Ee
Clearly, $$\int_{\SR^d} \Psi (y) y_1^{m_1}\cdots y_d^{m_d} 
dy=0, \quad\mbox{when }m_1+\ldots+m_d< 2M.$$
Finally, let $\Psi_0=\phi_0\otimes\varphi_0$, and $\Psi_k(x)=2^{kd}\Psi(2^kx)$, $k\geq 1$.
Then, if we choose $M$ sufficiently large we have 
 \Be\label{FpqPsi}
 \|f\|_{F^{s}_{p,q}}
 \,\approx\,
 \Big\| \big\{ 2^{ks}\Psi_k*f\big\}_{k\geq 0}\Big\|_{L^p(\ell^q)},\quad \forall\;f\in 
F^{s}_{p,q}(\SR^d);
 \Ee
see e.g. \cite[2.4.6]{triebel2}. Observe that, for $k\geq 1$ we can write
\Be
\Psi_k=\phi_k\otimes \varphi_{0,k}\,+\,\phi_{0,k}\otimes\varphi_k,
\label{ox}
\Ee
where we denote 
\[
 \phi_k(x_1)=2^k\phi(2^kx_1),\quad \varphi_k(x')=2^{(d-1)k}\varphi(2^kx'),
\]
and likewise for $\phi_{0,k}$ and $\varphi_{0,k}$.

With this notation the inequality in \eqref{GN1} is easily proved as follows.
From \eqref{ox} and $\|g_N\|_\infty,\|\chi\|_\infty\lesssim1$ one obtains
 \[
|\Psi_k*G_N|(x_1,x')\lesssim |\phi_k*g_N|(x_1)+|\varphi_k*\chi|(x'),\quad k\geq 1,\]
and a similar (simpler) expression when $k=0$. Therefore \eqref{FpqPsi} and the compact support of the
involved functions imply 
\[
\|G_N\|_{F^1_{p,q}(\SR^d)}\lesssim \|g_N\|_{F^1_{p,q}(\SR)}\,+\, \|\chi\|_{F^1_{p,q}(\SR^{d-1})}\lesssim1.
\]
In order to prove \eqref{GN2}, we let $\bbE_N^{(1)} $ and $\bbE_N ^{(d-1)}$ be the dyadic averaging operators on $\bbR$ and $\bbR^{d-1}$, respectively. For $N\geq1$, we observe that
\Be\label{1Dredux}
\Psi_N* (\bbE_N G_N)(x_1,x') =  \phi_N * (\bbE_N^{(1)} g_{N}) (x_1),
\text{ for $x'\in(\tfrac14,\tfrac34)^{d-1}$.}
\Ee 
Indeed, for such $x'$ one has
\begin{align*}
&\varphi_{0,N}* (\bbE_N^{(d-1)} \chi)(x')= \int \varphi_0 (y')dy'=1,
\\
& \varphi_N* (\bbE_N^{(d-1)} \chi)(x')= \int \varphi (y')dy'=0,
\end{align*}
due to the support properties of $\varphi_{0,N}(x'-\cdot)$ and $\varphi_N(x'-\cdot)$.
Therefore, \eqref{FpqPsi} and \eqref{1Dredux} imply that
\[
\|\SE_NG_N\|_{F^1_{p,q}(\SR^d)}\gtrsim \big\|2^N\phi_N*\SE^{(1)}_Ng_N\big\|_{L^{p}(\SR)}\,\gtrsim N^{\frac12-\frac1q},
\]
the last inequality due to \eqref{fWel2bis}.
This proves \eqref{GN2}, and concludes the proof of 
\[ \big\|\bbE_N\big\|_{F^1_{p,q}(\SR^d)\to F^1_{p,q}(\SR^d)}\,\gtrsim\, N^{\frac12-\frac1q},\quad 2<q\leq \infty.
 \] 
\ProofEnd

\section{Boundedness of the  dyadic averaging operators and the proof of Theorem \ref{th3}}
\label{S_th3}

We now gather the results from the previous sections to complete the proof of Theorem \ref{th3}.
In \S\ref{extension} we  first explain how the extension of $\SE_N$, from $\cS$ into $F^s_{p,q}$, should be defined (which is not obvious for all  cases). We discuss sufficient  conditions for uniform boundedness in \S\ref{S_suff} using theorems in previous chapters, and necessary conditions in \S\ref{S_nc}.
The proofs of some of the more tedious details about definability are given in \S\ref{App1}. The proof of necessary  conditions for the individual boundedness of the $\bbE_N$ is given  in \S\ref{App2}. In \S\ref{addendum} we  include a discussion when the characteristic function of a bounded interval can be defined as a linear functional on $F^s_{p,q}$.

\subsection{\it Extension of the operators $\SE_N$ to the space $F^s_{p,q}$}\label{extension}
Let $(s,p,q)$ be as in (i)-(v). For a distribution $f\in F^s_{p,q}$ we define
\Be
\SE_Nf:=\sum_{j=0}^\infty\SE_N(L_j\La_j f).
\label{EN_def}
\Ee
We claim that this series always converges in the $F^s_{p,q}$-norm
(actually, in all the $B^s_{p,r}$-norms, for $r>0$). When $\max\{d/p-d,1/p-1\}<s<1/p$ 
this fact was already justified in \cite[Remarks 3.5 and 4.5]{gsu-endpt}. 
When $s=d/p-d$, one can reach  the same conclusion with a slight modification in the proof of Proposition \ref{Finftyprop}.
We present the details in Lemma \ref{L_App1} below.
When $s=0$ and $p=\infty$, the convergence holds in all $F^0_{\infty,r}$-norms, for $r>0$, by Remark \ref{rem_conv_s0}.

We also remark that when $f\in F^s_{p,q}$ is locally integrable with polynomial growth 
then the above extension coincides with the usual operator, that is
\[
\sum_{j\geq 0}\SE_N(L_j\La_j f)=\sum_{I\in\sD_N}\Big(\mint_If\Big)\bbone_I;
\]
see Lemma \ref{L_App2} below.

\subsection{\it Sufficient conditions in Theorem \ref{th3} }\label{S_suff}
The uniform boundedness of $\SE_N$ in $F^s_{p,q}$ in the cases (i) and (iii) was established in \cite{gsu}.
In the cases (ii), (iv) and (v) it is a consequence of the Theorems \ref{expthm}, \ref{expthm:d/p-d} and \ref{th_s0}, and elementary embeddings; see the discussion 
following \eqref{pinf} in section 1.

\subsection{\it Necessary conditions in Theorem \ref{th3}}
\label{S_nc}
We first identify the range of exponents for the continuity of an individual operator $\SE_N$.

\begin{definition}\label{Adef}
Let $\fA$ be the set of all  $(s,p,q)$ for which one of  
 the following three conditions (i), (ii) or (iii)  hold:
\begin{equation}\label{Adefeq}
\begin{cases}
&\text{
\sline (i) $\;\;\quad\max\{\frac dp-d,\,\frac1p-1\}\,<\,s\,<\,1/p$, $\quad 0<p,q\leq \infty$}
\\
&\text{
\sline (ii) $\;\quad s=\frac dp-d$, $\quad 0<p\leq1$, $\quad 0<q\leq \infty$
}
\\
&\text{
\sline (iii) $\quad s=0$, $\quad p=\infty$, $\quad 0<q\leq \infty$.
}
\end{cases}
\end{equation}
\end{definition}

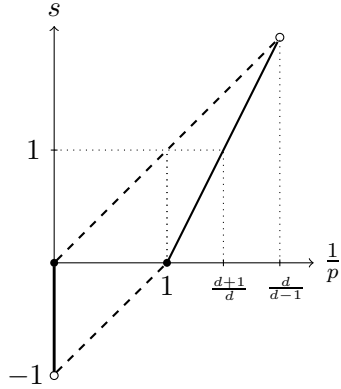
\begin{figure}[ht]\label{uniformbdnessfigure2}
 \centering
{\begin{tikzpicture}[scale=1.5]
\draw[->] (0.03,0.0) -- (2.3,0.0) node[right] {$\frac{1}{p}$};
\draw[->] (0.0,0.03) -- (0.0,2.1) node[above] {$s$};

\draw (1.0,0.03) -- (1.0,-0.03) node [below] {$1$};
\draw (1.5,0.03) -- (1.5,-0.03) node [below] {{\tiny $\;\;\frac{d+1}{d}$}};
\draw (2,0.03) -- (2,-0.03) node [below] {{\tiny $\;\;\frac{d}{d-1}$}};

\draw (0.03,1.0) -- (-0.03,1.00) node [left] {$1$};

\draw[dotted] (1.0,0.0) -- (1.0,1.0);
\draw[line width=1.1] (0.0,-0.03)--(0.0,-0.97);

\fill (0,0) circle (1pt);
\draw[dashed, thick] (0.02,0.02) -- (1.98,1.98) ;

\draw[dotted] (2,0)--(2,1.98);
\draw[dotted] (1,0)--(1,1);
\draw[dotted] (0,1)-- (1.5, 1.0)--(1.5,0);

\draw[thick] (1.98,1.96) -- (1.0,0.0);
\fill (1,0) circle (1pt);
\draw (2,2) circle (1pt);


\draw[dashed, thick] (0.02,-0.97) -- (1.0,0.0);
\draw (0,-1) circle (1pt) node [left] {$-1$};

\end{tikzpicture}
}
\caption{Range of exponents for the continuity of the individual operators $\SE_N$ in $F^s_{p,q}(\SR^d)$.}
\label{fig2b}
\end{figure}

\begin{proposition}
\label{P_e}
Let $N\in\SN_0$ be fixed. Suppose that
\Be
\big\|\SE_N\psi\big\|_{F^s_{p,q}}\leq c_N\|\psi\|_{F^s_{p,q}},\quad\forall\,\psi\in\cS(\SR^d).
\label{e0}
\Ee
Then necessarily $(s,p,q)\in \fA$. 
\end{proposition}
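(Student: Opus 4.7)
The strategy is to extract two necessary conditions from the hypothesis \eqref{e0} --- namely that $\bbone_{I_0}\in F^s_{p,q}$ (primal condition) and that $\bbone_{I_0}$ defines a continuous functional on $F^s_{p,q}$ (dual condition) --- and verify that their conjunction is equivalent to $(s,p,q)\in\fA$.

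\textbf{Primal step.} Fix $I_0\in\sD_N$ and take $\psi\in C^\infty_c(I_0^\circ)$ with $\int\psi=1$. Then $\SE_N\psi=2^{Nd}\bbone_{I_0}$, so \eqref{e0} gives
\[
2^{Nd}\|\bbone_{I_0}\|_{F^s_{p,q}}\le c_N\|\psi\|_{F^s_{p,q}},
\]
and hence $\bbone_{I_0}\in F^s_{p,q}(\SR^d)$. The classical characterization of indicator functions in Triebel--Lizorkin spaces (addressed in \S\ref{addendum}; cf.\ \cite[Sec.~2.3.1]{Tr83}) then forces $s<1/p$ for $p<\infty$ (respectively $s\le 0$ for $p=\infty$), together with $s\ge d/p-d$ when $0<p\le 1$.

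\textbf{Dual step.} For any $\psi\in C^\infty_c(I_0^\circ)$, now without normalization, one still has $\SE_N\psi=c_{I_0}(\psi)\bbone_{I_0}$, where $c_{I_0}(\psi)=|I_0|^{-1}\int_{I_0}\psi$. Combined with the primal conclusion,
\[
|c_{I_0}(\psi)|\,\|\bbone_{I_0}\|_{F^s_{p,q}}=\|\SE_N\psi\|_{F^s_{p,q}}\le c_N\|\psi\|_{F^s_{p,q}},
\]
so the functional $\psi\mapsto c_{I_0}(\psi)$ extends continuously to the $F^s_{p,q}$-closure of its natural domain, and the distribution $\bbone_{I_0}$ belongs to $(F^s_{p,q})^*$. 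In the Banach range $p,q\ge 1$, the duality $(F^s_{p,q})^*=F^{-s}_{p',q'}$ reduces this to the primal situation with $(-s,p',q')$ in place of $(s,p,q)$; invoking the primal characterization there yields $-s<1/p'$ when $p'<\infty$ (i.e., $s>1/p-1$ when $p>1$), and its closed variants when $p'=\infty$ or $p'\le 1$ (giving $s\ge 0$ at $p=1$, and $-1<s\le 0$ at $p=\infty$). Intersecting these constraints with those from the primal step produces exactly the three cases (i)--(iii) of Definition~\ref{Adef}.

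The main obstacle is the dual step in the quasi-Banach regime $p<1$ or $q<1$, where the identification $(F^s_{p,q})^*=F^{-s}_{p',q'}$ is no longer available and the dual must be described via H\"older- or Lipschitz-type spaces. Fortunately, in that regime the primal constraint $s\ge d/p-d$ already subsumes the expected $s>1/p-1$, so one only has to verify that no additional restriction appears from duality, which can be done by working directly with appropriate test families. A further delicate point is the treatment of the closed endpoints of $\fA$, namely the line $s=d/p-d$ for $0<p\le 1$ and the point $(p,s)=(\infty,0)$, where the indicator-function characterization of the primal step must be invoked precisely at the boundary.
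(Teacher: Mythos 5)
Your primal step contains a false claim and your dual step collapses, and together these gut the lower-bound half of the proposition. First, membership of $\bbone_{I_0}$ in $F^s_{p,q}$ gives only the upper bounds ($s<1/p$ for $p<\infty$, $s\le 0$ for $p=\infty$): the spaces increase as $s$ decreases, and $\bbone_{[0,1)^d}$ lies in $F^s_{p,q}$ for \emph{every} $s<1/p$ (it is, up to a constant, a local Hardy space atom needing no cancellation), so no condition of the form $s\ge d/p-d$ can come from the primal side. Second, your dual step tests $\SE_N$ only against $\psi\in C^\infty_c(I_0^\circ)$, which yields $|\int_{I_0}\psi|\lesssim\|\psi\|_{F^s_{p,q}}$ on that subspace --- but this inequality holds for \emph{all} $(s,p,q)$, since $\int_{I_0}\psi=\langle\psi,\zeta\rangle$ for any smooth compactly supported $\zeta$ equal to $1$ on $I_0$, and pairing with a fixed Schwartz function is continuous on every $F^s_{p,q}$ (continuity of the embedding into $\cS'$). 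In particular you cannot conclude $\bbone_{I_0}\in(F^s_{p,q})^*$: a Hahn--Banach extension of your functional may be represented by a smooth cutoff, which imposes no restriction on the indices. Since your quasi-Banach discussion leans on the erroneous primal bound $s\ge d/p-d$, in that range you are left with no lower bound at all, and even in the Banach range the strict inequality $s>1/p-1$ is not established.

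The missing idea is that the lower bounds come from functions with cancellation \emph{straddling the dyadic interfaces}, where $\SE_N$ destroys the cancellation. The paper (after rescaling to $N=0$) uses the classes $\cF_1,\cF_2$ of functions odd in $x_1$ (and, for $\cF_1$, in every variable) supported across the corner, resp.\ a face, of $I=(0,1)^d$; for such $f$ one computes $\SE_0 f=\bigl(\int_I f\bigr)h_i$ with fixed nonzero $h_i\in\Span\{\bbone_Q: Q\in\sD_0\}$, so \eqref{e0} forces $|\int_I f|\le C\|f\|_{F^s_{p,q}}$ for $f\in\cF_1\cup\cF_2$. This inequality is then defeated by explicit examples (Lemma \ref{L_e1}): the scaled odd bumps $g_j(x)=2^{jd}\eta(2^jx_1)\cdots\eta(2^jx_d)$ satisfy $\int_I g_j=1$ but $\|g_j\|_{F^s_{p,q}}\lesssim 2^{-j(\frac dp-d-s)}$, forcing $s\ge d/p-d$; the sums $\sum_{j\le N}g_j$ force $p\le 1$ at $s=d/p-d$; and the tensor variants $g_j(x_1)\chi(x')$ give $s\ge 1/p-1$ with the analogous endpoint analysis. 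This argument is purely constructive, works uniformly in the quasi-Banach range, and, intersected with the primal upper bounds, yields exactly membership in $\fA$ of Definition \ref{Adef}; no duality theory for $(F^s_{p,q})^*$ is needed, nor could it be used in the way you propose.
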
 

The proof of the proposition is based on the fact that $\bbone_{[0,1)^d}$ must belong to both $F^s_{p,q}$ and its dual space. 
We present the details in \S\ref{App2}  below.

We now turn to the existence of uniformly bounded extensions of the operators $\SE_N$ in the above region. 
This is equivalent to the uniform boundedness of the
numbers \[
{\rm Op}_\cS(\SE_N, F^s_{p,q}):=\sup\Big\{\big\|\SE_Nf\|_{F^s_{p,q}}\mid f\in\cS,\; \|f\|_{F^s_{p,q}}\leq 1\Big\}, 
 \]
when $N=0,1,2,\ldots$ An example given in \cite[Proposition 4.2]{gsu} shows that, 
\[
{\rm Op_\cS}(\SE_N, F^s_{p,q})\,\gtrsim\, {\rm Op_\cS}(\SE_N, B^s_{p,\infty})\,\gtrsim\, 2^{(s-1)N},
\]
when $1<s<1/p$ and $(d-1)/d<p<1$. So the condition $s\leq 1$ is necessary. 
When $s=1$, Theorem \ref{th_NC_s1} shows that $0<q\leq 2$ is also necessary.  
This comprises all the cases considered in Theorem \ref{th3}, and completes the proof of all the assertions. 
Moreover, it also gives the following.

\begin{corollary}
\label{C_e}
Let $N\in\SN_0$ be fixed, and $(s,p,q)\in\fA$. Then the (extended) operator $\SE_N$, as in \eqref{EN_def}, satisfies
\[
{\rm Op}_\cS(\SE_N,F^s_{p,q})\,\approx \,\big\|\SE_N\big\|_{F^s_{p,q}\to F^s_{p,q}}\,\approx\,\begin{cases}
 2^{(s-1)N} & \mbox{if   $1<s<1/p$}\\
N^{1/2-1/q} & \mbox{if   $s=1$, $q\ge 2$}\\
1 & \mbox{otherwise.}
\end{cases}
\]
\end{corollary}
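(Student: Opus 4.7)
The plan is to verify the three asymptotic equivalences by combining the upper and lower bounds already established in the paper, after first clarifying the equivalence between $\text{Op}_\cS(\SE_N, F^s_{p,q})$ and the operator norm of the extended $\SE_N$.

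For the equivalence $\text{Op}_\cS(\SE_N, F^s_{p,q}) \approx \|\SE_N\|_{F^s_{p,q} \to F^s_{p,q}}$, the bound $\text{Op}_\cS \le \|\SE_N\|$ is trivial. For the reverse, when $\cS$ is norm-dense (i.e.\ $\min(p,q) < \infty$), the series \eqref{EN_def} defines the unique continuous extension and the two quantities coincide. In the cases $p = \infty$ or $q = \infty$, the series in \eqref{EN_def} converges in norm by the arguments in \S\ref{extension} (see also Remark \ref{rem_conv_s0} and Lemma \ref{L_App1}); each partial sum equals $\SE_N(\varPi_J f)$ with $\|\varPi_J f\|_{F^s_{p,q}}$ uniformly bounded by $\|f\|_{F^s_{p,q}}$ via \eqref{PiN}, so passing to the limit yields $\|\SE_N f\|_{F^s_{p,q}} \lesssim \text{Op}_\cS(\SE_N, F^s_{p,q}) \|f\|_{F^s_{p,q}}$.

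For the upper bounds, three sources suffice. In the ``$\approx 1$'' branch, Theorem \ref{th3} gives $\|\SE_N\|_{F^s_{p,q} \to F^s_{p,q}} \lesssim 1$ throughout the corresponding portion of $\fA$. In the ``$\approx N^{1/2-1/q}$'' branch ($s=1$, $q\ge 2$), the upper bound is the first part of Theorem \ref{th_NC_s1} (for $q>2$) and follows from Theorem \ref{th3}(ii) at $q=2$. In the ``$\approx 2^{(s-1)N}$'' branch ($1 < s < 1/p$), the upper bound is recorded in the remark immediately following the proof of the upper bound in Theorem \ref{th_NC_s1}, and uses Propositions 3.1--3.4 of \cite{gsu-endpt}.

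For the lower bounds, the two nontrivial ones are already proved: $\gtrsim N^{1/2-1/q}$ when $s=1$, $q>2$, is the explicit construction in the second part of Theorem \ref{th_NC_s1} via randomized Weierstrass functions, and $\gtrsim 2^{(s-1)N}$ when $1 < s < 1/p$ is Proposition 4.2 of \cite{gsu}. The ``otherwise'' lower bound $\gtrsim 1$ is elementary: choose $g \in C^\infty_c((0,1)^d)$ with $\int g = 1$ and set $g_N(x) = g(2^N x)$, so that $\SE_N g_N = \bbone_{I_{N,0}}$ where $I_{N,0} = [0, 2^{-N})^d$. By the standard homogeneity of Triebel--Lizorkin norms $\|f(2^N\cdot)\|_{F^s_{p,q}} \approx 2^{N(s-d/p)}\|f\|_{F^s_{p,q}}$, both $\|g_N\|_{F^s_{p,q}}$ and $\|\SE_N g_N\|_{F^s_{p,q}}$ rescale by the same factor, reducing the ratio to $\|\bbone_{[0,1)^d}\|_{F^s_{p,q}}/\|g\|_{F^s_{p,q}}$; since $(s,p,q)\in\fA$, Proposition \ref{P_e} ensures this is a finite, strictly positive constant independent of $N$. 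The only delicate point in the argument is the norm-equivalence between the Schwartz operator norm and the extended operator norm when $p=\infty$ or $q=\infty$; everything else is a matter of assembling previously established bounds.
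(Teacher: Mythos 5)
Your overall assembly coincides with the paper's: upper bounds from Theorem \ref{th3} (and, for $s=1$, $q>2$, from Theorem \ref{th_NC_s1}; for $1<s<1/p$, from the remark following its upper-bound proof), lower bounds from Theorem \ref{th_NC_s1} and \cite[Proposition 4.2]{gsu}, plus an elementary $\gtrsim 1$ bound in the remaining cases. However, the step you single out as ``the only delicate point'' is not correct as written. In the cases $p=\infty$ or $q=\infty$ you bound the partial sums of \eqref{EN_def} by ${\rm Op}_\cS(\SE_N,F^s_{p,q})\,\|\varPi_J f\|_{F^s_{p,q}}$, but $\varPi_J f$ is only a band-limited smooth function with no decay; it is not in $\cS$, so ${\rm Op}_\cS$ cannot be applied to it without a further truncation/Fatou argument (also, $\cS$ is norm-dense in $F^s_{p,q}$ only when \emph{both} $p<\infty$ and $q<\infty$, not when $\min(p,q)<\infty$). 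Fortunately this inequality is never needed: since the extended operator agrees with the classical one on $\cS$ (Lemma \ref{L_App2}), one has ${\rm Op}_\cS(\SE_N,F^s_{p,q})\le\|\SE_N\|_{F^s_{p,q}\to F^s_{p,q}}$, the lower bounds are realized by Schwartz (indeed $C^\infty_c$) test functions and hence bound ${\rm Op}_\cS$ from below, and the upper-bound theorems are proved for the extended operator on all of $F^s_{p,q}$; since the two rates match, the sandwich gives both asserted equivalences at once. This is the (implicit) route of the paper.

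A second weak point is the ``otherwise'' lower bound: the two-sided dilation identity $\|f(2^N\cdot)\|_{F^s_{p,q}}\approx 2^{N(s-d/p)}\|f\|_{F^s_{p,q}}$ is not valid for the inhomogeneous spaces throughout $\fA$ (for $s\le 0$ the low frequencies dominate and the scaling behaviour is different, possibly with logarithmic corrections), so the reduction of the ratio $\|\bbone_{I_{N,0}}\|_{F^s_{p,q}}/\|g(2^N\cdot)\|_{F^s_{p,q}}$ to an $N$-independent constant needs a genuine argument rather than an appeal to homogeneity. The conclusion $\gtrsim 1$ is indeed elementary, but it should be justified, e.g.\ by estimating both norms for the rescaled functions directly through the local-means characterization, or, for $s\ge 0$, by using an embedding into $h^p$ (resp.\ $L^p$) together with a fixed test function, and for $s<0$, $p>1$ by pairing $\SE_N g_N$ against a fixed dual test function. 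With these two repairs the proposal is the same proof as the paper's.
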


\subsection{{\it Convergence of $\sum_j\bbE_N(L_j\La_j  f)$ when 
$f\in F^{d/p-d}_{p,\infty}$}}\label{App1}


\begin{lemma}\label{L_App1}
Let $\frac{d-1}d<p\leq 1$, $s=d/p-d$ and $r>0$.  If $N\geq 0$ and $g\in F^s_{p,\infty}(\SR^d)$, then
\[
\lim_{J_1\to \infty}\sup_{J_2\ge J_1}\Big\|\SE_N\big(\sum_{j=J_1}^{J_2}L_j\La_jg\big)\Big\|_{B^s_{p,r}}=0.
\]
In particular, the series
\[
\SE_Ng:=\sum_{j=0}^\infty\SE_N(L_j\La_j g)
\]
converges in $F^s_{p,q}$ for all $0< q \leq \infty$.
\end{lemma}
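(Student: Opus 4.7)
I plan to adapt the proof of Proposition \ref{Finftyprop} by replacing the sums over $j>N$ with $\sum_{j=J_1}^{J_2}$ (for $J_1>N$), and by replacing the bound in terms of $\|G\|_p$, with $G:=\sup_{\ell\ge 0}2^{\ell s}\fM^*_\ell(\La_\ell g)$, by one in terms of the integral of $G^p$ over a suitably shrinking set $E_{J_1}$ whose intersection has measure zero. Set $h^{J_1,J_2}:=\sum_{j=J_1}^{J_2}L_j\La_j g$.

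Retracing the estimates \eqref{intwJ}--\eqref{AkG} verbatim but with $\sum_{j>N}$ replaced by $\sum_{j=J_1}^{J_2}$, and using the substitution $\mint_{\om(J)}=2^{(j+1)d}\int_{\om(J)}$ together with the identity $d(1-p)=sp$ (recall $s=d/p-d$), one arrives at a sum
\[
\sum_{I'\in\sD_N(I)}\sum_{j=J_1}^{J_2}\sum_{J\in\sD_{j+1}[\partial I']}\int_{\om(J)}(2^{js}|\fM^*_j(\La_j g)|)^p,
\]
carrying a prefactor of order $2^{Ndp}$. The key observation is that, by Lemma \ref{omegacubelemma}, for each fixed $I'$ the sets $\{\om(J):J\in\sD_{j+1}[\partial I'],\,j\ge J_1\}$ are pairwise disjoint and all contained in the strip $\{x:\dist_\infty(x,\partial I')\le 2^{-J_1}\}$. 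Hence the inner double sum is bounded by $\int_{E_{J_1}(I')}G^p$, where $E_{J_1}(I'):=\bigcup_{j\ge J_1}\bigcup_{J\in\sD_{j+1}[\partial I']}\om(J)$. Propagating this improvement through the integration in $x$, the summations over $I$ and $k>N$, and—via the analogous modification of \eqref{jgeNkleN-ple1}, applied to the families $\om(J)$ attached to the cubes $I_{N,\mu}$, $\mu\in\cZ_{k,N}(x)$—through the range $k\le N$, yields the uniform bound
\[
\sup_{J_2\ge J_1}\big\|\SE_N h^{J_1,J_2}\big\|_{B^s_{p,r}}\,\lesssim_{N}\, \Big(\textstyle\int_{E_{J_1}} G^p\Big)^{1/p},\qquad E_{J_1}:=\bigcup_{I\in\sD_N}\bigcup_{I'\in\sD_N(I)}E_{J_1}(I').
\]

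Finally, $(E_{J_1})_{J_1}$ is a decreasing sequence of sets, and any fixed point $x$ lies in at most finitely many of the cubes $\om(J)$ (for each of the finitely many $I'$ having $x$ within their $2^{-N}$-neighborhood, $x$ lies in at most one $\om(J)$ by the disjointness of Lemma \ref{omegacubelemma}(ii)); hence $x\notin E_{J_1}$ for $J_1$ sufficiently large, so $\bigcap_{J_1}E_{J_1}$ has Lebesgue measure zero. Since $g\in F^s_{p,\infty}(\SR^d)$ gives $G^p\in L^1(\SR^d)$, dominated convergence yields $\int_{E_{J_1}}G^p\to 0$, proving the first claim; the corresponding statement for the operators $T_N[\cdot,\fa]$ with $\|\fa\|_\infty\le 1$ follows by the same argument. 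The `in particular' assertion is then immediate: the partial sums of $\sum_j\SE_N(L_j\La_j g)$ are Cauchy in $B^s_{p,r}$, hence convergent there, and this convergence extends to $F^s_{p,q}$ for all $q$ via the standard embeddings $B^s_{p,r}\hookrightarrow F^s_{p,r}\hookrightarrow F^s_{p,q}$ valid for $r\le\min(p,q)$, choosing $r$ small. I expect the main technical obstacle to be the careful bookkeeping in the modified version of \eqref{jgeNkleN-ple1} for $k\le N$, in order to reduce it, as well, to the same shrinking quantity $(\int_{E_{J_1}}G^p)^{1/p}$.
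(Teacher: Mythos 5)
Your strategy for the main terms is sound and genuinely different from the paper's proof: you keep the maximal bounds for $\La_\ell g$ intact and only restrict the domain of integration to the shrinking sets $E_{J_1}$ built from the $\om(J)$-cubes, and then apply dominated convergence to $G^p\in L^1$; the paper instead inserts smooth weights $\zeta_{j,N}$ adapted to $\cU_{N,j}$ with compactly supported Fourier transform (so that Peetre's maximal inequality survives the localization) and lets the pointwise majorant $\zeta^*_{L,N}\to 0$ a.e. Your device does reproduce the analogues of \eqref{AkG} and \eqref{cAkG} with $\|G\|_p$ replaced by $\big(\int_{E_{J_1}}G^p\big)^{1/p}$, and your disjointness and measure-zero arguments for $E_{J_1}$ (via Lemma \ref{omegacubelemma}) are correct, as is the final passage to $F^s_{p,q}$ through $B^s_{p,r}\hookrightarrow F^s_{p,r}\hookrightarrow F^s_{p,q}$ for small $r$.

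There is, however, a gap in the claim that this covers the whole range $k\le N$. The proof of \eqref{jgeNkleN-ple1} rests on the splitting \eqref{jN_eq}, and only its first part (the $\bbE_N^\perp$-part, i.e.\ the quantities $\cA_k$) is estimated through the $\om(J)$-cubes; the second summand $\sum_j L_kL_j\wt f_j$ is bounded in Proposition \ref{Finftyprop} by $\lc\|f\|_{B^{d/p-d}_{p,\infty}}$, with no localization whatsoever, so it is not controlled by $\big(\int_{E_{J_1}}G^p\big)^{1/p}$ by the machinery you cite. Moreover that Besov bound does not become small merely because $j\ge J_1$: for Weierstrass-type $g\in F^s_{p,\infty}$ (as in \eqref{fWe}, tensored with a bump) one has $\sup_{j\ge J_1}2^{js}\|\La_j g\|_p\gtrsim 1$ for all $J_1$. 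Hence your key uniform inequality, as stated, is not established. The repair is cheap: in this term one has $k\le N$ and $j\ge J_1>N$, so the off-diagonal factor $2^{(k-j)(\frac dp-d+M-A)}$ occurring in that estimate is at most $2^{-(J_1-N)\gamma}$ with $\gamma=\frac dp-d+M-A>0$ by \eqref{Mcondition}, giving a contribution $\lc 2^{-\gamma(J_1-N)}\|g\|_{B^{d/p-d}_{p,\infty}}$, which also tends to zero; adding this quantity to the right-hand side of your bound (and taking $J_1>N$, which is automatic in the limit) closes the gap and yields the lemma, including the corresponding statements for $T_N[\cdot,\fa]$.
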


\begin{proof}
Pick a non-negative function $\zeta\in\cS(\SR^d)$ such that
\[
\zeta\geq 1 \quad\mbox{on $[-5,5]^d$},\mand \supp\widehat\zeta\subset\{|\xi|\leq 1/8\}.
\]
For each cube $Q=\{x\mid |x-x_0|_\infty\leq \dt\}$, let
\[
\zeta_Q(x):=\zeta\Big(\frac{x-x_0}\dt\Big),
\]
so that $\zeta_Q\geq 1$ in $Q^{**}$ (the 5-fold dilate of $Q$), and $\widehat\zeta_Q$ has support in $\{|\xi|\leq(8\dt)^{-1}\}$.
Finally, for $j\geq N+1$, we define (with $\cU_{N,j}$ as in \eqref{UNj})
\[
\zeta_{j, N}(x):=\sum_{{Q\in\sD_{j+1}}\atop{Q\subset \cU_{N,j}}}\zeta_Q(x).
\]
This function satisfies the properties
\Be
|\zeta_{j,N}(x)|\leq\frac{c_M}{\Ds\big(1+2^j\min_{1\leq i\leq d}\dist(x_i,2^{-N}\SZ)\big)^M},
\label{zeta_decay}
\Ee
for each $M>0$, and
\Be
\supp\widehat\zeta_{j,N}\subset \big\{|\xi|\leq 2^{j-3}\big\}.
\label{zeta_spectrum}
\Ee
Let $f=\sum_{j=J_1}^{J_2}L_j\La_jg$, and assume that $J_1\geq L+3$ for some fixed $L> N$.
Then, $f\in F^s_{p,\infty}$ and $\La_jf\equiv 0$ unless $j\geq L$. We now follow the proof of Proposition \ref{Finftyprop},
with the following modification. If $J\in \sD_{j+1}$ is such that $J\subset\cU_{N,j}$, then for all $y\in J$,
\[
\fM_j^*(\La_jf)(y)=\sup_{|h|_\infty\leq 2^{-j}}|\La_j f(y+h)|\leq \fM^*_j\big(\zeta_{j,N}\La_jf\big)(y).
\]
One can use this estimate in \eqref{intwJ} (or in \eqref{intwJ2}), so the same arguments
which lead to \eqref{AkG} (or to \eqref{cAkG}) can be applied with the function $\La_\ell f$ replaced by $\zeta_{\ell,N}\La_\ell f$.
That is there exists $\gamma>0$ such that for $A_k$, $\cA_k$ as in \eqref{intwJ}, 
\eqref{defofcAk}, resp., 
\[
2^{k(\frac dp-d)}(\|A_k\|_p+\|\cA_k\|_p)
\lesssim 2^{-|k-N|\ga}\,\|G_L\|_p,
\]
with
\[
G_L(x)=\sup_{\ell\geq L} \;2^{\ell(\frac dp-d)}\fM^*_\ell\big(\zeta_{\ell,N}\La_\ell f\big).
\]
Since the spectrum of $\zeta_{\ell,N}\La_\ell f$ is contained in $\{|\xi|\leq 2^\ell\}$, one can use Peetre's inequality
and deduce as in \eqref{HLmaximal} that
\[
\|G_L\|_p\lesssim \Big\|\sup_{\ell\geq L}2^{\ell(\frac dp-d)}\zeta_{\ell,N}\,|\La_\ell f|\Big\|_p
\lesssim \Big\|\zeta^*_{L,N}\,\sup_{\ell\geq 0}2^{\ell(\frac dp-d)}\,|L_\ell\La_\ell g|\Big\|_p,\]
where
\[
\zeta^*_{L,N}(x)=\sup_{\ell\geq L} \zeta_{\ell,N}(x)\lesssim \Big(1+2^L\min_{1\leq i\leq d}(x_i, 2^{-N}\SZ)\Big)^{-M}.
\]
Observe that 
\[
\lim_{L\to\infty}\zeta^*_{L,N}(x)=0,\quad \forall\; x\in\bigcup_{n=1}^\infty\cU_{N,n}^\complement,
\]
and therefore at almost every $x\in\SR^d$. So, the assumption $g\in F^s_{p,\infty}$ and the Dominated Convergence Theorem imply that
\[
\lim_{L\to \infty}\|G_L\|_p=0.
\]
\end{proof}

\begin{lemma}\label{L_App2}
Let $(s,p,q)$ be as in (i)-(v) in Theorem \ref{th3}. Let $f\in F^s_{p,q}$ 
be locally integrable with polynomial growth, that is, 
$f(x)/(1+|x|)^M\in L^1(\SR^d)$ for some $M\geq0$. 
Then 
\Be
\sum_{j\geq 0}\SE_N(L_j\La_j f)=\sum_{I\in\sD_N}\Big(\mint_If\Big)\bbone_I,
\label{EN_dist}
\Ee
in the sense of tempered distributions.
\end{lemma}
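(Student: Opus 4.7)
\medskip

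\noindent\textbf{Plan.} The approach is to test both sides of \eqref{EN_dist} against an arbitrary Schwartz function $\varphi$, reducing the identity to an application of the Littlewood--Paley reproducing formula to the auxiliary bounded function $\Phi := \SE_N\varphi$.

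First I would observe that $\Phi$ is a step function on $\sD_N$, equal to $\mint_I\varphi$ on each $I\in\sD_N$; since $\varphi\in\cS$ and the cubes have fixed side $2^{-N}$, the function $\Phi$ belongs to $L^1\cap L^\infty$ with super-polynomial decay at infinity. In particular, both $f\cdot\Phi$ and $|f|\cdot M\Phi$ (where $M$ denotes the Hardy--Littlewood maximal operator) are absolutely integrable by the polynomial growth hypothesis on $f$. A direct Fubini calculation then yields
\[
\Big\langle \sum_{I\in\sD_N}\Big(\mint_I f\Big)\bbone_I, \varphi\Big\rangle \,=\, \int_{\SR^d} f(x)\,\Phi(x)\,dx.
\]

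For the truncated partial sum $f_J := \sum_{j\leq J} L_j\La_j f$, which is smooth with polynomial growth, the classical identity $\SE_N f_J = \sum_I (\mint_I f_J)\bbone_I$ holds pointwise, so pairing with $\varphi$ gives $\langle \SE_N f_J,\varphi\rangle = \int f_J\,\Phi\,dx$. By Lemma \ref{L_App1} (and Remark \ref{rem_conv_s0} in case (v)) the defining series for the extended $\SE_N f$ converges in $F^s_{p,q}$ (resp.\ $F^0_{\infty,r}$), hence in $\cS'$, so $\langle \SE_N f_J,\varphi\rangle \to \langle \SE_N f,\varphi\rangle$ as $J\to\infty$.

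The heart of the proof is then to show $\int f_J\,\Phi\,dx \to \int f\,\Phi\,dx$. Writing $f_J = \varPi_J f$ with $\varPi_J := \sum_{j\leq J} L_j\La_j$, a Fubini argument (justified by $f\in L^1_{\loc}$ of polynomial growth and the integrability of the Schwartz kernel of $\varPi_J$) gives
\[
\int f_J\,\Phi\,dx \,=\, \int f\,\varPi_J^*\Phi\,dx,
\]
where $\varPi_J^*\Phi$ is Schwartz, being the convolution of the rapidly decaying $\Phi$ with a Schwartz kernel. The Littlewood--Paley reproducing identity gives $\varPi_J^*\Phi\to\Phi$ pointwise off the measure-zero set $\bigcup_{I\in\sD_N}\partial I$, while the Peetre maximal function estimates provide the uniform bound $|\varPi_J^*\Phi(x)|\lesssim M\Phi(x)$. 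Since $|f|\cdot M\Phi$ is integrable, the Dominated Convergence Theorem delivers the desired limit, completing the identification.

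The main technical point will be the pointwise almost-everywhere convergence $\varPi_J^*\Phi\to\Phi$ for the bounded, piecewise-constant function $\Phi$: at the cube boundaries of $\sD_N$ the function $\Phi$ is discontinuous (though on a null set), and the Peetre-type domination must be confirmed uniformly in $J$. Both are routine for bounded functions with mild regularity but need to be spelled out because $\Phi$ is not continuous.
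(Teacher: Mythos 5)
Your overall strategy is sound and is essentially a transposed version of the paper's own argument: the paper also pairs with a test function, moves the averaging operator onto it via $\int \SE_N g\cdot\psi=\int g\cdot\SE_N\psi$, exploits the rapid decay of $\SE_N\psi$, and passes to the limit using $\varPi_n f\to f$ in $L^1((1+|x|)^{-M}dx)$. However, one step of your proposal fails as written: the dominating function $|f|\cdot M\Phi$, with $M$ the Hardy--Littlewood maximal operator, need not be integrable. Since $\Phi=\SE_N\varphi\not\equiv 0$ has positive $L^1$-mass on a bounded set, one has $M\Phi(x)\gtrsim |x|^{-d}$ for large $|x|$; the maximal function of any nontrivial integrable function decays no faster than $|x|^{-d}$, no matter how fast $\Phi$ itself decays. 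Hence $|f|\,M\Phi\in L^1$ forces $\int |f(x)|(1+|x|)^{-d}\,dx<\infty$, which is not implied by the hypothesis that $f(x)(1+|x|)^{-M}\in L^1$ for \emph{some} (possibly large) $M$. So the Dominated Convergence step breaks down precisely for the functions of high polynomial growth that the lemma is meant to cover.

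The repair is easy and brings you back to the paper's mechanism: instead of the lossy bound $|\varPi_J^*\Phi|\lesssim M\Phi$, use that $\varPi_J^*$ is convolution with $2^{Jd}K(2^J\cdot)$ for a fixed Schwartz $K$ and $J\ge 0$, so that for every $L$ one has the uniform estimate $\sup_{J\ge 0}|\varPi_J^*\Phi(x)|\le C_L(1+|x|)^{-L}$ (the kernel lives at scales $2^{-J}\le 1$, so the super-polynomial decay of $\Phi$ is preserved uniformly in $J$). Choosing $L\ge M+d+1$ yields an integrable majorant against $|f|$, and the rest of your argument --- a.e.\ convergence of $\varPi_J^*\Phi\to\Phi$ off $\bigcup_{I\in\sD_N}\partial I$, the Fubini interchanges, and the $\cS'$-convergence of the partial sums supplied by Lemma \ref{L_App1}, Remark \ref{rem_conv_s0} and the cases quoted from \cite{gsu-endpt} --- goes through. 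This uniform rapid-decay bound is exactly the ingredient the paper uses, in the form $|\SE_N\psi(x)|\lesssim (1+|x|)^{-M}$ combined with the convergence $\varPi_n f\to f$ in the weighted space $L^1((1+|x|)^{-M}dx)$.
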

\begin{proof}
In this lemma we restrict the notation \[
\SE_Ng(x):=\sum_{I\in\sD_N}\Big(\mint_Ig\Big)\bbone_I(x),\quad x\in\SR^d,
\] 
only to locally integrable functions $g$ with polynomial growth.
In particular, $\SE_Ng$ is another such function, hence a tempered distribution.
We write $\widetilde\SE_Nf$ for the distribution on the left hand side of \eqref{EN_dist}. 
If $\psi\in \cS(\SR^d)$, then 
\Be
\big(\widetilde\SE_Nf,\psi\big)=\sum_{j=0}^\infty\big(\SE_N(L_j\La_jf),\psi\big)= \sum_{j=0}^\infty\int_{\SR^d}(L_j\La_jf)\,\SE_N\psi.
\label{tEN}
\Ee
The family of operators $\{\varPi_n=\sum_{j=0}^nL_j\La_j\}_{n\geq0}$ is a smooth approximation of the identity, and therefore
\[
\varPi_nf\to f \quad \mbox{in $L^1(\SR^d, (1+|x|)^{-M}dx)$,}
\]
by the condition on $f$. Therefore, using that $|\SE_N\psi(x)|\lesssim (1+|x|)^{-M}$, we can pass the sum inside the integral in the last expression of \eqref{tEN},
and continuing with Fubini's theorem obtain
\[
\big(\widetilde\SE_Nf,\psi\big)= \int_{\SR^d}(\sum_{j=0}^\infty L_j\La_jf)\,\SE_N\psi= \int_{\SR^d}f\,\SE_N\psi=\int_{\SR^d}\SE_Nf\,\psi.
\]
Hence $\widetilde\SE_Nf$ coincides with $\SE_Nf$ as distributions.
\end{proof}

\begin{remark} One can extend the domain  of $\bbE_N$ further, dropping the polynomial growth assumption in Lemma \ref{L_App2} if in the resolution of the identity \eqref{resofid} we replace the operator $L_N$ by suitable compactly supported convolution kernels.
Indeed there are, for $\eps>0$, $M<\infty$, $C^\infty$ functions 
$\phi$, $\widetilde \phi$, $\psi$, $\widetilde \psi$ supported in $\{|x|\le \eps \}$ such  that $\int \phi=1$, $\int \widetilde \phi=1$ and 
$1-\widehat \phi$, $1-\widehat{\widetilde \phi}$,  $\widehat \psi$, $\widehat{\widetilde \psi}$ all vanish of order $M$ at $0$, and such that  for distributions $f$
\Be\label{localresofid} L_0\widetilde L_0 f+ \sum_{k=1}^\infty L_k\widetilde L_k f=f
\Ee in the sense of  distributions; here $L_0$, $\widetilde L_0$ are the convolution operators with convolution kernels $\phi$, $\widetilde \phi$, resp., and
for $k\ge 1$, $L_k$ and $\widetilde L_k$ are the convolution operators with convolution kernels $2^{(k-1)d} \psi(2^{k-1}\cdot)$, $2^{(k-1)d} \widetilde \psi(2^{k-1}\cdot)$, resp. The resolution in the form \eqref{localresofid} is perhaps not widely known; a proof can be found in \cite[Lemma 2.1]{st}, together with some extensions.
For us the use of the nonlocal operators $\Lambda_N$ has the advantage that we may apply the Peetre maximal inequalities in a straightforward way.
\end{remark}

\subsection{{\it Proof of Proposition \ref{P_e}}}\label{App2}


Since $\SE_N[\psi](x)= \SE_0[\psi(2^{-N}\cdot)](2^Nx)$, we may assume that $N=0$.
Then \eqref{e0} takes the form
\Be
\big\|\SE_0\psi\big\|_{F^s_{p,q}}\leq c_N\|\psi\|_{F^s_{p,q}},\quad\forall\,f\in\cS(\SR^d).
\label{e1}
\Ee
Let $\psi\in C^\infty_c((0,1)^d)$ such that $\int\psi=1$. Then $\SE_0(\psi)=\bbone_{[0,1)^d}$, and \eqref{e1} implies
\Be
\bbone_{[0,1)^d}\in F^s_{p,q}(\SR^d).
\label{e2}
\Ee
The validity of this property is well-known. If $0<p<\infty$, then \eqref{e2} holds iff $s<1/p$.
If $p=\infty$, then \eqref{e2} holds iff $s\leq 0$. See e.g. \cite[Proposition 2.50]{triebel4}.
This gives the required upper bounds on $s$.

We turn to the lower bounds for the exponent $s$. Consider the classes of test functions
\Beas
\cF_1& = & \Span\Big\{\eta(x_1)\cdots\eta(x_d)\mid \eta\in C^\infty_c(-1,1) \;\mbox{ {\small odd}}\,\Big\}\\
\cF_2& = & \Big\{\eta(x_1)\chi(x')\mid \eta\in C^\infty_c(-1,1) \;\mbox{ {\small odd,} }\,\mbox{{\small $\chi\in C^\infty_c(0,1)^{d-1}$ with $\int\chi=1$}}\Big\}.
\Eeas
We first show that, if $f\in\cF_i$, $i=1,2$, then
\Be
\SE_0(f)\,=\,\Big(\int_{[0,1]^d}f\Big)\,\cdot\,h_i,
\label{e3}
\Ee
for some fixed functions $h_1, h_2\in\Span\{\bbone_I\mid I\in\sD_0\}$.

Let $f\in\cF_1$. It suffices to show \eqref{e3} for $f(x)=\eta(x_1)\cdots\eta(x_d)$, with $\eta\in C^\infty_c(-1,1)$ odd.
Given $\e=(\e_1,\ldots,\e_d)\in\{0,1\}^d$ we denote
\[
Q_\e=[0,1)^d-\e \mand \sign(\e)=\prod_{i=1}^d(-1)^{\e_i}.
\]
We claim that \eqref{e3} holds with 
\Be
h_1=\sum_{\e\in\{0,1\}^d}\sign(\e)\,\bbone_{Q_\e}.
\label{e4}
\Ee
Indeed, for such $f$ we have
\[
\SE_0(f)=\sum_{\e\in\{0,1\}^d}\,\mint_{Q_\e}f\,
\cdot\,\bbone_{Q_\e},
\]
and since $\eta$ is odd
\[
\mint_{Q_\e}f=\prod_{i=1}^d\int_{[0,1)-\e_i}\!\!\!\eta(x_i)\,dx_i=
\prod_{i=1}^d(-1)^{\e_i}\int_0^1\eta(x_i)\,dx_i=\sign(\e)\int_{[0,1)^d}f.
\]
Thus, \eqref{e3} follows. 

Similarly, let $f\in\cF_2$, and denote $Q_0=[0,1)^d$ and $Q_1=Q_0-(1,0,\ldots,0)$. Then
\[
\SE_0(f)=\mint_{Q_0} f\,\cdot\,\bbone_{Q_0}\,+\,\mint_{Q_1} f\,\cdot\,\bbone_{Q_1}=\Big(\int_{Q_0}f\Big)\,\big(\bbone_{Q_0}-\bbone_{Q_1}\big),
\]
and hence \eqref{e3} holds with $h_2=\bbone_{Q_0}-\bbone_{Q_1}$. 
This completes the proof of \eqref{e3}, and reduces the proof of Proposition \ref{P_e} to the following result.

\begin{lemma}
\label{L_e1}
Let $I=(0,1)^d$. Suppose that 
\Be
\Big|\,\int_I\, f\;\Big|\;\leq\; C\,\|f\|_{F^s_{p,q}},\quad \forall\;f\in\cF_1\cup\cF_2.
\label{e5}
\Ee
Then one of the following two conditions must hold

\sline (a) $\;\quad s\,>\,\max\{\frac dp-d,\,\frac1p-1\}$, $\quad 0<p,q\leq \infty$

\sline (b) $\;\quad s=\frac dp-d$, $\quad 0<p\leq1$, $\;\quad 0<q\leq \infty$.
\end{lemma}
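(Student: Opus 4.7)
The plan is to prove the contrapositive: given $(s,p,q)$ satisfying neither (a) nor (b), I will exhibit counterexamples in $\cF_1\cup\cF_2$ violating the bound. The key observation is that $\int_I f=\langle\bbone_I,f\rangle$, so the hypothesis essentially says that $\bbone_I$ acts as a bounded functional on $\operatorname{span}(\cF_1\cup\cF_2)\subset F^s_{p,q}$. This will be translated, by symmetrization and density arguments, into the classical condition that $\bbone_I$ belongs to the dual space of $F^s_{p,q}(\bbR^d)$.

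First I would use $\cF_2$ to reduce to a one-dimensional problem. Fixing a convenient $\chi\in C^\infty_c((0,1)^{d-1})$ with $\int\chi=1$, each $f=\eta\otimes\chi\in\cF_2$ satisfies $\int_I f=\int_0^1\eta$, and by a standard tensor-product/multiplier estimate (multiplication by a fixed smooth compactly supported factor in the complementary variables is bounded on $F^s_{p,q}$), one has $\|f\|_{F^s_{p,q}(\bbR^d)}\lesssim\|\eta\|_{F^s_{p,q}(\bbR)}$ with constant depending only on $\chi,s,p,q$. Thus the hypothesis yields $|\int_0^1\eta|\lesssim\|\eta\|_{F^s_{p,q}(\bbR)}$ for all odd $\eta\in C^\infty_c(-1,1)$; by writing an arbitrary $\tilde\eta\in C^\infty_c(0,1)$ as half of its odd extension, the same bound holds for all $\tilde\eta\in C^\infty_c(0,1)$. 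This is precisely the statement that $\bbone_{(0,1)}$ defines a continuous linear functional on a dense subspace of $F^s_{p,q}(\bbR)$. Invoking the standard duality characterization in Triebel's books, this forces $s>1/p-1$ (strict) when $p\geq 1$, via $(F^s_{p,q}(\bbR))^*=F^{-s}_{p',q'}(\bbR)$ and $\bbone_{(0,1)}\in F^t_{p',q'}(\bbR)\iff t<1/p'$; and it forces $s\geq 1/p-1$ (equality admitted) when $p\leq 1$, via $(F^s_{p,q}(\bbR))^*=B^{1/p-1-s}_{\infty,\infty}(\bbR)$ and $\bbone_{(0,1)}\in B^t_{\infty,\infty}\iff t\leq 0$.

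Next I would exploit $\cF_1$ to sharpen the conclusion to $s\geq d/p-d$ when $p\leq 1$. Taking $f=\eta^{\otimes d}\in\cF_1$, one has $\int_I f=(\int_0^1\eta)^d$, while $\|f\|_{F^s_{p,q}(\bbR^d)}$ reflects the genuine $d$-dimensional scaling of a tensor-product bump. The analogous duality argument in $d$ dimensions, using $(F^s_{p,q}(\bbR^d))^*=B^{d(1/p-1)-s}_{\infty,\infty}(\bbR^d)$ for $p\leq 1$ together with the standard fact $\bbone_I\in B^t_{\infty,\infty}(\bbR^d)\iff t\leq 0$, yields $s\geq d/p-d$. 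For $p\leq 1$ this is strictly sharper than the 1D bound $s\geq 1/p-1$ (since $d/p-d\geq 1/p-1$, with equality only when $p=1$ or $d=1$), and equality $s=d/p-d$ is admitted, matching precisely $\bbone_I\in B^0_{\infty,\infty}(\bbR^d)$; this is case (b).

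Combining the two steps yields the dichotomy: for $p>1$ one obtains $s>1/p-1=\max\{d/p-d,1/p-1\}$, which is case (a); for $p\leq 1$ one obtains $s\geq d/p-d=\max\{d/p-d,1/p-1\}$, giving (a) (strict) or (b) (equality). The main technical obstacle is the construction of sharp counterexamples inside the structured class $\cF_1\cup\cF_2$ at the critical endpoints, especially the strict inequality $s>1/p-1$ for $p>1$: a single rescaled bump is insufficient because its $L^p$-mass scales non-trivially, and one must instead use a multi-scale Weierstrass-type construction in the spirit of \S\ref{S_s=1} to saturate the critical index. A secondary subtlety is justifying the tensor-product norm estimate $\|\eta\otimes\chi\|_{F^s_{p,q}(\bbR^d)}\lesssim\|\eta\|_{F^s_{p,q}(\bbR)}$ uniformly in the quasi-Banach regime $p<1$, which requires a careful adaptation of the multiplier argument.
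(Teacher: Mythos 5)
Your reduction via odd extensions discards exactly the information the lemma hinges on. Once you replace the odd $\eta$ (whose support straddles the boundary point $0$ of $I$) by an arbitrary $\tilde\eta\in C^\infty_c(0,1)$, the statement $|\int_0^1\tilde\eta|\lesssim\|\tilde\eta\|_{F^s_{p,q}(\bbR)}$ is true for \emph{every} $(s,p,q)$: for $\tilde\eta$ supported in $(0,1)$ one has $\int_0^1\tilde\eta=\langle\tilde\eta,\phi\rangle$ for any fixed $\phi\in C^\infty_c$ with $\phi\equiv1$ on $[0,1]$, and this pairing is bounded on all of $F^s_{p,q}$ simply because $F^s_{p,q}\hookrightarrow\cS'$ continuously. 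Hence no necessary condition on $(s,p,q)$ can be extracted from that reduced statement. Relatedly, the inference ``bounded on this subspace, therefore $\bbone_I$ belongs to the dual space'' is a non sequitur: $C^\infty_c(0,1)$ (and likewise $\Span(\cF_1\cup\cF_2)$) is not dense in $F^s_{p,q}$, Hahn--Banach is unavailable in the quasi-Banach range $p<1$ or $q<1$ (the range of main interest here), and even where it applies the extension only has to agree with $\bbone_I$ on interior test functions, so it may be taken smooth; the jump of $\bbone_I$ across $\partial I$, which is what produces the restrictions on $s$, is invisible to this argument. A symptom of the bookkeeping going wrong is your claim of strict inequality $s>1/p-1$ at $p=1$, which contradicts case (b) of the lemma ($s=0$, $p=1$ is allowed; indeed $\bbone_I\in\mathrm{bmo}=(h^1)^*$).

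What is actually needed --- and what the paper does --- is precisely the construction you postpone as a ``technical obstacle''; it constitutes the whole proof and is far simpler than the Weierstrass-type construction you envisage. With $\eta$ odd, supported in $(-1/2,1/2)$, $\int_0^1\eta=1$, the single rescaled tensor bump $g_j(x)=2^{jd}\eta(2^jx_1)\cdots\eta(2^jx_d)\in\cF_1$, anchored at the corner $0\in\partial I$, has $\int_I g_j=1$ while the oddness (vanishing total integral in each variable) gives $\|g_j\|_{F^s_{p,q}}\lesssim 2^{-j(\frac dp-d-s)}$; letting $j\to\infty$ forces $s\ge \frac dp-d$, so no multi-scale example is needed for the non-critical range. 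At the critical value $s=\frac dp-d$ the lacunary sum $\sum_{j=1}^N g_j\in\cF_1$ has $\int_I=N$ but norm $\lesssim N^{1/p}$, forcing $p\le1$. The functions $g_j(x_1)\chi(x')\in\cF_2$ and their sums give in the same way $s\ge\frac1p-1$, and $p\le1$ when $s=\frac1p-1$; combining the two families yields the stated dichotomy. Your tensor-product norm estimate $\|\eta\otimes\chi\|_{F^s_{p,q}(\bbR^d)}\lesssim\|\eta\|_{F^s_{p,q}(\bbR)}$ is fine and is also used implicitly in the paper, but as it stands your argument has no mechanism for seeing the boundary of $I$, so the necessity claims do not follow.
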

\begin{proof}
Let $\eta\in C^\infty_c(-1/2,1/2)$, odd and such that $\int_0^1\eta=1$.
Define
\[
g_j(x_1,\ldots, x_d):=2^{jd}\eta(2^jx_1)\cdots\eta(2^jx_d),\quad j\geq 1.
\]
Observe that $g_j\in \cF_1$ and $
\int_I g_j(x)\,dx\,=\,1.$ 
On the other hand, a standard computation shows that
\Be
\|g_j\|_{F^s_{p,q}(\SR^d)}\,\lesssim\, 2^{-j(\frac dp-d-s)}.
\label{e6a}
\Ee
So, \eqref{e5} cannot hold unless $s\geq d/p-d$. Suppose now that $s=d/p-d$, and consider
$
g=\sum_{j=1}^N g_j\in\cF_1.
$
In this case we have 
$
\int_I g(x)\,dx\,=\,N
$
and
$
\|g\|_{F^s_{p,q}(\SR^d)}\lesssim N^{1/p}.
$
Thus, \eqref{e5} cannot hold unless $0<p\leq 1$. Observe that when  $d=1$ this completes the proof of the lemma.

Suppose now that $d\geq 2$. Define now the functions
\[
G_j(x_1,x'):=\,g_j(x_1)\,\chi(x'), \quad j\geq 1,
\] 
where $\chi\in C^\infty_c(0,1)^{d-1}$ has $\int\chi=1$. Then, $G_j\in\cF_2$ and $\int_IG_j=1$.
On the other hand
\[
\|G_j\|_{F^s_{p,q}(\SR^d)}\,\lesssim\,\|g_j\|_{F^s_{p,q}(\SR)}\,\lesssim\, 2^{-j(\frac1p-1-s)}.
\]
So \eqref{e5} can only hold if $s\geq 1/p-1$. In the case $s=1/p-1$, consider the function
$
G=\sum_{j=1}^NG_j\in\cF_2.
$
Then we have
\[
\int_IG=N\mand \|G\|_{F^s_{p,q}(\SR^d)}\,\lesssim\,\|g\|_{F^s_{p,q}(\SR)}\,\lesssim\, N^{1/p}.
\]
Thus, \eqref{e5} can only hold if $0<p\leq 1$. This does not give a new region if $d\geq 2$.
\end{proof}

\subsection{{\it  Extension of $\bbone_I$ as a bounded functional in $F^s_{p,q}$}} \label{addendum}

The next result is a converse of Lemma \ref{L_e1}, which in addition gives 
a continuous extension of the functional $\bbone_I$ to the whole space $F^s_{p,q}$.

\begin{lemma}
\label{L_e2}
Let $(s,p,q)$ be numbers satisfying  (a) or (b) in Lemma \ref{L_e1}, and let $I\in\sD_N$. Then, 
for each $f\in F^s_{p,q}$ the series
\Be
\bbone^*_I(f):=\sum_{j=0}^\infty \int_IL_j\La_j f
\label{1*I}
\Ee
is absolutely convergent, and there exists a constant $C_N=C_N(s,p,q)>0$ such that
\[
\big|\bbone^*_I(f)\big|\,\leq\, C_N\,\|f\|_{F^s_{p,q}},\quad \\ \,\text{ for all } f\in F^s_{p,q}.
\]
Moreover, 

\Benu
\item \eqref{1*I} does not depend on the specific decomposition $I=\sum_{j=0}^\infty L_j\La_j$, 
\item If $f\in F^s_{p,q}(\SR^d)$ is locally integrable with polynomial growth
 then
\[
\bbone^*_I(f)=\int_If(x)\,dx.
\]
\item For every $\zeta\in C^\infty_c$ such that $\zeta\equiv 1$ in a neighborhood of $\bar{I}$, it holds
\Be
\bbone^*_I(\zeta\, f)\,=\, \bbone_I^*(f),\,\, \text{for all }f\in F^s_{p,q}.
\label{e31}
\Ee
\Eenu
\end{lemma}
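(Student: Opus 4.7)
The goal is to establish the central bound $|\bbone_I^*(f)|\leq C_N\|f\|_{F^s_{p,q}}$ together with absolute convergence of the defining series, and then to deduce the three additional properties. By the homogeneity of the Triebel-Lizorkin quasi-norm under affine changes of coordinates $x\mapsto 2^{-N}(x+\nu)$, I would first reduce to the case $I=[0,1)^d$ and $N=0$ (at the cost of a constant depending on $N$). The per-level analysis rests on the identity
\[
\int_I L_j\La_j f \;=\;\langle g_j,\La_j f\rangle,\qquad g_j:=\tilde\beta_j * \bbone_I,
\]
where $\tilde\beta(x):=\beta(-x)$. Since $\beta$ has vanishing moments up to order $M$, for large $j$ the function $g_j$ is supported in an $O(2^{-j})$-neighborhood of $\partial I$ with $\|g_j\|_\infty\lesssim 1$, and can be split into $O(2^{j(d-1)})$ bumps $g_{j,m}$ each supported in a cube $Q_{j,m}$ of side $2^{-j}$ along $\partial I$.

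In region (a), where $s>\max\{d/p-d,\,1/p-1\}$, I would estimate each $|\langle g_j,\La_j f\rangle|$ using $\|g_j\|_{p'}\lesssim 2^{-j/p'}$ (for $p\geq 1$) or the $p$-triangle inequality applied to the local pieces $g_{j,m}$ (for $p<1$), combined with the embedding $F^s_{p,q}\hookrightarrow B^s_{p,\infty}$ that yields $\|\La_j f\|_p\lesssim 2^{-js}\|f\|_{F^s_{p,q}}$. This produces a geometric decay $|\int_I L_j\La_j f|\lesssim 2^{-j\eps}\|f\|_{F^s_{p,q}}$ for some $\eps>0$, and hence absolute convergence of the series. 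At the endpoint region (b), where $s=d/p-d$ and $p\leq 1$, the same direct argument yields only an $O(1)$ bound per level, so a finer approach is required: I would estimate each local pairing by $|\langle g_{j,m},\La_j f\rangle|\lesssim 2^{-jd}\sup_{Q_{j,m}^*}|\La_j f|$ using a Peetre-type inequality, and then invoke the $p$-subadditivity of the $L^p$-quasinorm (for $p\leq 1$) together with the Hardy-Littlewood maximal inequality to absorb the double sum over $j$ and $m$ into the characterization $\|(\sum_j 2^{jsq}|\La_j f|^q)^{1/q}\|_p\sim\|f\|_{F^s_{p,q}}$. The hardest step will be organizing the endpoint estimate so that the swap between the $j$-sum and the integration is rigorous and the resulting bound matches the Triebel-Lizorkin norm.

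With the central bound in hand, the remaining properties follow by essentially standard arguments. Independence of the decomposition (part (1)) is obtained by applying the same per-level estimates to any two admissible resolutions of identity and passing to the common limit via the uniform bound. Part (2), namely $\bbone_I^*(f)=\int_I f$ for locally integrable $f$ of polynomial growth, is the content of Lemma \ref{L_App2} adapted to $\bbone_I^*$: the $L^1$-convergence of $\varPi_n f\to f$ against the weight $(1+|x|)^{-M}$ legitimizes the interchange of sum and integral, and the pairing reduces to the classical $\int_I f$. Finally, the cutoff invariance $\bbone_I^*(\zeta f)=\bbone_I^*(f)$ in part (3) follows because $(1-\zeta)f$ is supported in a closed set disjoint from a neighborhood of $\bar I$; the rapid decay of the convolution kernels of $L_j$ and $\La_j$ away from the origin then forces the restriction of $L_j\La_j((1-\zeta)f)$ to $I$ to decay faster than any power of $2^{-j}$, so the per-level bound shows $\bbone_I^*((1-\zeta)f)=0$.
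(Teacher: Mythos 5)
Your setup---writing $\int_I L_j\La_j f=\langle \tilde\beta_j*\bbone_I,\La_jf\rangle$, noting that the moment cancellation confines $\tilde\beta_j*\bbone_I$ to an $O(2^{-j})$-collar of $\partial I$, and estimating the local pairings by Peetre maximal functions---is the same per-level scheme the paper uses, and it does settle case (a), where the collar's measure $\approx 2^{-j}$ produces geometric decay in $j$ and the $B^s_{p,\infty}$-norm suffices. The genuine gap is at the endpoint (b), $s=d/p-d$, $p\le 1$, which is the raison d'\^etre of the lemma. The ``finer approach'' you describe (local sup bounds, Peetre, $p$-subadditivity, Hardy--Littlewood) is exactly the computation you already observed gives only an $O(1)$ bound per level: it yields $|\int_I L_j\La_jf|^p\lesssim\int_{U_j}G^p$ with $G=\sup_\ell 2^{\ell(d/p-d)}\fM^*_\ell(\La_\ell f)$ and $U_j$ an $O(2^{-j})$-neighborhood of $\partial I$, and since the sets $U_j$ are nested rather than disjoint, $\sum_j\int_{U_j}G^p$ need not converge (the mass of $G^p$ may concentrate at $\partial I$). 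The missing idea is the geometric device of Lemma~\ref{omegacubelemma}: replace the sup of $\La_jf$ over each boundary cube $J\in\sD_{j+1}[\partial I]$ by an average of $\fM^*_j(\La_jf)$ over the selected child $\om(J)$, as in \eqref{intwJ2}; these cubes $\om(J)$ are pairwise disjoint \emph{across all scales} $j$, so the $j$-sum collapses into a single integral $\int_{I^{**}}G^p$, which is then bounded by $\|f\|^p_{F^{d/p-d}_{p,\infty}}$ through the maximal-function step \eqref{HLmaximal}. This is also precisely where the Triebel--Lizorkin norm (a sup in $\ell$ inside one integral) enters; your per-level estimates only ever use $\|\La_jf\|_p\lesssim 2^{-js}\|f\|_{B^s_{p,\infty}}$, which cannot close the endpoint case. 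Without this (or an equivalent cross-scale summation mechanism), the step you flag as ``the hardest'' is not actually carried out.

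A second, smaller flaw is in part (3): rapid decay of the individual terms $\int_I L_j\La_j\big((1-\zeta)f\big)$ gives absolute convergence of the series but does not show the sum is zero---a series of rapidly decaying nonzero numbers can sum to anything. What must be shown is that the partial sums $\sum_{j\le n}\int_I L_j\La_j((1-\zeta)f)=\int_I\varPi_n\big((1-\zeta)f\big)$ tend to $0$; since $(1-\zeta)f$ is merely a tempered distribution, the paper does this by rewriting the partial sum as the pairing of $f$ with $\Phi_n(y)=(1-\zeta(y))\int_I\varPi_n(x-y)\,dx$ and verifying $\Phi_n\to 0$ in $\cS$. The ingredient you invoke (kernel decay away from the support) is the right one, but it has to be applied to the partial-sum kernels, not term by term. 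Parts (1) and (2) of your outline are consistent with the paper's argument.
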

\begin{proof}
Since $F^s_{p,q}\hookrightarrow F^s_{p,\infty}$, it suffices to prove the result for $q=\infty$.
First notice that
\[
\Big|\sum_{j=0}^N\int_IL_j\La_j f\Big|\leq \int_I\big|\varPi_Nf\big|\leq|I|\,\big\|\varPi_Nf\|_{L^\infty(I)}.
\]
Now, using the Peetre maximal functions,
\[
\big\|\varPi_Nf\|_{L^\infty(I)}\lesssim \Big(\mint_{I^{**}}\big|\fM^*_N(\varPi_N f)\big|^p\Big)^{1/p} 
\lesssim 2^{\frac{Nd}p}\,\big\|\varPi_N f\big\|_p,
\]
and letting $r=\min\{p,1\}$, the last factor is bounded by\[
\big\|\varPi_N f\big\|_p\leq 
\Big(\sum_{j=0}^N\|L_j\La_jf\|_p^r\Big)^\frac1r\lesssim \Big(\sum_{j=0}^N2^{-jsr}\Big)^\frac1r\,\|f\|_{B^s_{p,\infty}}.
\]
So, we are left with proving that
\[
\sum_{j>N}\Big|\int_IL_j\La_j f\Big|\lesssim C_N\,\|f\|_{F^s_{p,\infty}}.
\]
Since $j>N$, we can use \cite[Lemma 2.3.ii]{gsu} and inequality \eqref{intwJ2} to obtain
\Beas
\Big|\int_IL_j\La_j f\Big| & \lesssim & 2^{-jd}\sum_{J\in\sD_{j+1}(\partial I)}\|\La_jf\|_{L^\infty(J)}\\
& \lesssim &  2^{-jd}\sum_{J\in\sD_{j+1}(\partial I)}\Big(\mint_{w(J)}\big|\fM^*_j(\La_j f)\big|^p\Big)^{1/p}.
\Eeas
In the case (b), {\it i.e.}  $s=d/p-d$ and $0<p\leq1$, we argue as in \eqref{intwJ} and obtain
\[
\sum_{j>N}\Big|\int_IL_j\La_j f\Big|^p\lesssim \int_{I^{**}}\sup_{\ell>N}\big|2^{\ell(\frac dp-d)}\fM^*_\ell(\La_\ell f)\big|^p\lesssim
\|f\|^p_{F^{ d/p-d}_{p,\infty}}.
\]
In the case (a), {\it i.e.} $s\,>\,\max\{\frac dp-d,\,\frac1p-1\}$, one can prove in a similar fashion the stronger estimate
\[
\sum_{j>N}\Big|\int_IL_j\La_j f\Big|^r\lesssim C_N(s,p)\,\|f\|^r_{B^s_{p,\infty}}
\]
with $r=\min\{p,1\}$. 

It is immediate to verify that \eqref{1*I} does not depend on the specific resolution of the identity $I=\sum_{j=0}^\infty L_j\La_j$.
The assertion (2) in the statement is a consequence of the convergence of 
the approximate identity $\varPi_n f\to f$ in $L^1_{\rm loc}(\SR^d)$ when $f$ is locally integrable with polynomial growth.

We finally verify the third assertion. Let $\varPi_n=\sum_{j=0}^n L_j\La_j$ and $\chi=1-\zeta$. Then,
\[
\big|\bbone^*_I(f)-\bbone^*_I(\zeta f)\big|=\lim_{n\to\infty}\Big|\int_I\varPi_n(\chi f)\Big|.
\]
Using distribution theory we can write
\[
\int_I\varPi_n(\chi f)(x)\,dx=\int_I\big\langle\chi f,\varPi_n(x-\cdot)\big\rangle\,dx =\Big\langle f,\chi\textstyle\int_I\varPi_n(x-\cdot)dx\Big\rangle.
\]
The result follows after checking that for 
\[
\Phi_n(y):=\chi(y)\,\int_I\varPi_n(x-y)dx
\] we have $\lim_{n\to \infty}\Phi_n= 0$ 
in the topology of the Schwartz class.
\end{proof}

Let $h\in\sH_d$. The previous result can be applied to define $h^*$ as a continuous linear functional in $F^s_{p,q}$. Namely,
\[
h^*(f)=\sum_{j=0}^\infty \int h(x) \,L_j\La_jf(x)\,dx,\quad \text{ for }f\in F^s_{p,q}.
\] 
Then, Lemmas \ref{L_e1} and \ref{L_e2} imply the following.

\begin{corollary}
The Haar functions, regarded as linear functionals, can be continuously extended from $\cS$  into  $F^s_{p,q}$ if and only if the indices $(s,p,q)$
satisfy (a) or (b) in Lemma \ref{L_e1}.
\end{corollary}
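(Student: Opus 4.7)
The plan is to deduce this corollary as a direct consequence of the two preceding lemmas, once one observes that every Haar function is a finite signed sum of characteristic functions of dyadic cubes. Expanding the tensor product definition of $h^{\e}_{k,\nu}$, I would first record the identity
\[
h^{\e}_{k,\nu}(x)\,=\,\sum_{\delta\in\{0,1\}^d}\sigma_\e(\delta)\,\bbone_{I_{k+1,2\nu+\delta}}(x),\qquad \sigma_\e(\delta):=\prod_{i=1}^d(-1)^{\e_i\delta_i},
\]
which also reduces to $h^{\vec 0}_{0,\nu}=\bbone_{I_{0,\nu}}$ in the constant case. Thus the associated functional $h^*$ acts on $\psi\in\cS$ as a finite linear combination of integrals $\int_I\psi$ over dyadic cubes $I$ of a common scale.

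For the sufficiency direction, assume $(s,p,q)$ satisfies (a) or (b) of Lemma \ref{L_e1}. Then Lemma \ref{L_e2} supplies a continuous extension of each $\bbone^*_I$ to the whole of $F^s_{p,q}$, with norm controlled by $C_N(s,p,q)$. I would define
\[
h^*(f)\,:=\,\sum_{\delta\in\{0,1\}^d}\sigma_\e(\delta)\,\bbone^*_{I_{k+1,2\nu+\delta}}(f),\qquad f\in F^s_{p,q},
\]
which is manifestly a bounded linear functional on $F^s_{p,q}$. Using item (2) of Lemma \ref{L_e2} (applied to $\psi\in\cS$, which is locally integrable with polynomial growth), this functional coincides with $\psi\mapsto\int h\,\psi$ on $\cS$, hence is the desired continuous extension.

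For the necessity direction, suppose that every Haar functional extends continuously to $F^s_{p,q}$. Since $h^{\vec 0}_{0,0}=\bbone_{[0,1)^d}\in\sH_d$, in particular the map $\psi\mapsto\int_{[0,1)^d}\psi$ on $\cS$ admits a continuous extension, which yields the bound
\[
\Bigl|\int_{[0,1)^d}\psi\Bigr|\,\le\,C\,\|\psi\|_{F^s_{p,q}},\qquad\psi\in\cS.
\]
Restricting to the subclass $\psi\in\cF_1\cup\cF_2\subset\cS$ places us exactly under the hypothesis of Lemma \ref{L_e1}, which then forces $(s,p,q)$ to satisfy either case (a) or case (b). No real obstacle arises: both lemmas carry out the analytical work, and the corollary is essentially a packaging of their content, with the only step of substance being the reduction of arbitrary Haar functionals to $\bbone_I^*$ via the tensor-product expansion above.
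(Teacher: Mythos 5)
Your proposal is correct and follows essentially the paper's intended route: sufficiency by writing each Haar function as a finite signed sum of indicators of its children dyadic cubes and invoking the bounded functionals $\bbone_I^*$ from Lemma \ref{L_e2} (whose item (2) identifies the extension with $\psi\mapsto\int h\psi$ on $\cS$), and necessity by noting that boundedness on $\cS$ of the single functional attached to $\bbone_{[0,1)^d}$, restricted to $\cF_1\cup\cF_2$, is exactly the hypothesis of Lemma \ref{L_e1}. This matches the paper, which defines $h^*(f)=\sum_j\int h\,L_j\La_j f$ and cites the same two lemmas.
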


\

\section{Failure of density for $s=1$}
\label{S_dense}

\begin{proposition} 
\label{fdensity}
There exists a Schwartz function $f$ supported in $(\frac 1{16}, \frac{15}{16})^d$ such that,
for all $0<p\leq 1$ and $0<q\leq \infty$, 
\Be
\label{liminf}\liminf_{N\to \infty} \|\bbE_N f -f\|_{F^1_{p,q}}>0.\Ee
Moreover, if $d/(d+1)\leq p<1$ and $0<q\le 2$ then 
the span of $\sH_d$ is not a dense set in $F^1_{p,q}(\SRd)$. 
\end{proposition}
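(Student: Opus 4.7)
The plan is to produce a single Schwartz $f$ of product form $f(x_1,x')=f_1(x_1)\prod_{i\geq 2}\chi(x_i)$, with $f_1\in C^\infty_c((\tfrac1{16},\tfrac{15}{16}))$ real-valued and $f_1'\not\equiv 0$, and $\chi\in C^\infty_c((\tfrac1{16},\tfrac{15}{16}))$ with $\chi\equiv 1$ on $(\tfrac14,\tfrac34)$. The $\liminf$ claim is reduced to a one-dimensional estimate modelled on Lemma \ref{BN}, and the non-density assertion then follows from the uniform boundedness in Theorem \ref{expthm} against this first bound.

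First I would carry out the one-dimensional computation. Since $f_1$ is smooth with bounded derivatives, the second-order Taylor expansion on each pair of adjacent dyadic intervals $I^\pm$ of scale $N$ gives, on $\tI_{N,\mu}=[\mu/2^N,(\mu+1/8)/2^N]$, by the same cancellation argument as in the proof of Lemma \ref{BN},
\[
\beta_N*(\SE_N^{(1)}f_1-f_1)(x)\,=\,2^{-N}f_1'(\mu/2^N)\,B(2^Nx-\mu)\,+\,O\bigl(2^{-2N}\|f_1''\|_\infty\bigr).
\]
Raising to the $p$-th power (with $p\leq1$), using $|a+b|^p\leq|a|^p+|b|^p$ to absorb the error, integrating over $\tI_{N,\mu}$ and summing over $\mu/2^N\in(\tfrac1{16},\tfrac{15}{16})$, the main term becomes $2^{-N}\int_0^{1/8}|B|^p\cdot \sum_\mu|f_1'(\mu/2^N)|^p$, a Riemann sum for $\|f_1'\|_p^p\cdot\int_0^{1/8}|B|^p>0$, whereas the cumulative error is $O(2^{-pN})\to 0$. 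Hence
\[
\liminf_N\;\|2^N\beta_N*(\SE_N^{(1)}f_1-f_1)\|_{L^p(\SR)}\,>\,0.
\]
To pass to $d$ dimensions I would use the product test function $\Psi_N=\phi_N\otimes\varphi_{0,N}+\phi_{0,N}\otimes\varphi_N$ of \eqref{Psitensor}, which supplies an equivalent quasi-norm by \eqref{FpqPsi} and in particular $\|2^N\Psi_N*g\|_p\lesssim\|g\|_{F^1_{p,q}}$. On the region $R_N=(\tfrac14+c2^{-N},\,\tfrac34-c2^{-N})^{d-1}$, with $c$ chosen so that $\prod_{i\geq 2}\SE_N^{(1)}\chi\equiv 1$ on a $2^{-N}$-neighborhood of each $x'\in R_N$, the identities $\int\varphi_0=1$ and $\int\varphi=\int\Delta^M\varphi_0=0$ collapse the $x'$-convolution, giving
\[
\Psi_N*(\SE_Nf-f)(x_1,x')\,=\,\phi_N*_{x_1}(\SE_N^{(1)}f_1-f_1)(x_1),\qquad x'\in R_N,
\]
where I also used $\SE_Nf=\SE_N^{(1)}f_1\cdot\prod_{i\geq 2}\SE_N^{(1)}\chi(x_i)$. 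Since $|R_N|$ is bounded below, the 1D estimate yields $\liminf_N\|\SE_Nf-f\|_{F^1_{p,q}}>0$ for all $0<p\leq1$, $0<q\leq\infty$.

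For the non-density claim, I assume $d/(d+1)\leq p<1$ and $q\leq 2$, so Theorem \ref{expthm} provides $C:=\sup_N\|\SE_N\|_{F^1_{p,q}\to F^1_{p,q}}<\infty$. If $\operatorname{span}\sH_d$ were dense in $F^1_{p,q}$, I could pick finite Haar sums $g_n$ with $\|g_n-f\|_{F^1_{p,q}}\to 0$; by construction $\SE_Ng_n=g_n$ whenever $N$ exceeds the maximal Haar scale occurring in $g_n$, so for such $N$
\[
\|\SE_Nf-f\|_{F^1_{p,q}}\,\leq\,\|\SE_N(f-g_n)\|_{F^1_{p,q}}+\|g_n-f\|_{F^1_{p,q}}\,\leq\,(C+1)\|f-g_n\|_{F^1_{p,q}}.
\]
Letting first $N\to\infty$ and then $n\to\infty$ forces $\limsup_N\|\SE_Nf-f\|_{F^1_{p,q}}=0$, contradicting \eqref{liminf}. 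The main obstacle in this plan is the Taylor error analysis at the quasi-Banach exponents $p<1$: the ordinary triangle inequality is lost, and one must exploit $|a+b|^p\leq|a|^p+|b|^p$ while verifying that the cumulative error over the $\sim 2^N$ dyadic cubes of measure $\sim 2^{-N}$ stays subcritical. Once that is in place, the passage to $d$ dimensions is essentially algebraic, relying on the distinguishing moment properties $\int\varphi_0=1$ and $\int\varphi=0$ that make $\Psi_N$ collapse to $\phi_N*_{x_1}$ on the good set $R_N$.
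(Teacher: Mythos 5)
Your argument is correct, but it takes a more self-contained route than the paper for the first assertion. The paper simply takes $f(x)=x_1\eta(x)$ and quotes the companion paper (\cite[Proposition 8.3]{gsu-endpt}) for the lower bound $\liminf_N\|\bbE_Nf-f\|_{B^1_{p,\infty}}>0$, then concludes via the embeddings $F^1_{p,q}\hookrightarrow F^1_{p,\infty}\hookrightarrow B^1_{p,\infty}$; you instead rebuild the lower bound inside this paper, running the Lemma \ref{BN}-type cancellation computation on a one-dimensional $f_1$ (with the error handled by the $p$-triangle inequality and a Riemann-sum limit for $\|f_1'\|_p^p$), and then tensorizing with the test functions \eqref{Psitensor} exactly as in \S\ref{S_ExFd}, using $\int\varphi_0=1$, $\int\varphi=0$ to collapse to the $x_1$-convolution as in \eqref{1Dredux}. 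What your route buys is independence from the Besov companion paper and a nice reuse of machinery already present here (Lemma \ref{BN} and \eqref{FpqPsi}); what the paper's route buys is brevity. The second half (non-density for $d/(d+1)\le p<1$, $q\le 2$ via $\SE_Ng=g$ for finite Haar sums and the uniform bounds of Theorem \ref{th3}(ii)/Theorem \ref{expthm}) is the same argument as in the paper. Two small points to tidy up, neither a real gap: (i) since the collapse produces $\phi_N$ rather than $\beta_N$, the one-dimensional computation should be stated for $\phi=\phi_0^{(2M)}$ (which has the needed support, at least two vanishing moments, and a nontrivial compactly supported primitive $\phi_0^{(2M-1)}$), or one should note that $\phi$ qualifies as a kernel of the type in \S\ref{S_qnorm} — the paper makes the same identification in \S\ref{S_ExFd}; (ii) for the lower bound you need $|a+b|^p\ge|a|^p-|b|^p$ (the rearranged $p$-triangle inequality), and in the density argument the triangle inequality carries a quasi-norm constant; also, when $\|\SE_Nf-f\|_{F^1_{p,q}}=\infty$ (e.g.\ $p=1$) the claim \eqref{liminf} is trivial, and otherwise the one-sided local-means/\eqref{FpqPsi} bound you invoke applies.
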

\begin{proof} 
The proof of \eqref{liminf} uses the same function $f$ as in \cite[Proposition 8.3]{gsu-endpt}. 
Namely, pick $\eta\in C^\infty_c(\SR^d)$ such that $\supp\eta\subset(\frac 1{16}, \frac{15}{16})^d$ and  $\eta(x)=1$ on $(1/8, 7/8)^d$.
Then consider $f(x)=x_1\,\eta(x)$. In \cite[Proposition 8.3]{gsu-endpt} it was  shown that this function satisfies
\Be
\liminf_{N\to \infty} \|\bbE_N f -f\|_{B^1_{p,\infty}}>0.
\label{liminfB}
\Ee
Therefore, \eqref{liminf} follows from here and the embeddings 
\[
F^1_{p,q}\hookrightarrow F^1_{p,\infty} \hookrightarrow B^1_{p,\infty}.
\]
We next show that \eqref{liminf} implies the failure of the density of span $\sH_d$ in $F^1_{p,q}$, for all $0<q\leq 2$ and $d/(d+1)\leq p<1$. 
Indeed, assume for contradiction that such density holds, and given $f\in\cS$ as in \eqref{liminf} and $\e>0$, find $g\in\Span\sH_d$ such that $\|f-g\|_{F^1_{p,q}}<\e$. 
Let $N_0$ be large enough so that $\SE_N(g)=g$ for all $N\geq N_0$. Then, the (quasi-)triangle inequality and the uniform boundedness of $\SE_N$ in Theorem \ref{th3} gives
\[
\|f-\SE_Nf\|_{F^1_{p,q}}\lesssim\|f-g\|_{F^1_{p,q}}+\|\SE_Ng-\SE_Nf\|_{F^1_{p,q}} \lesssim \e,\quad N\geq N_0,
\]  
which contradicts \eqref{liminf}.
\end{proof}

\begin{remark} It would be interesting to settle the question whether 
$\Span\sH_d$ is dense in the spaces $F^1_{p,q}$, when $d/(d+1)\leq p<1$ and $2<q< \infty$. As the operators $\bbE_N$ are not uniformly bounded in this range our current argument is not sufficient to give an answer (\cf. also \cite[\S8.1]{gsu-endpt} for a similar discussion about the Besov space analogue of this question).
\end{remark}

\section{Localization and partial sums of admissible enumerations}\label{localization}

Let $\cU=\{u_n\}_{n=1}^\infty$ be a strongly admissible enumeration of $\sH_d$, as in Definition \ref{strongly-adm} above. 
Explicit examples of such enumerations are not hard to construct; see e.g. \cite[\S 11]{gsu-endpt}.

Here we quote a localization lemma for such enumerations, which relates the partial operators $S_R^\cU$
and the dyadic averages $\SE_N$ and $T_N[\cdot, \fa]$. We let $\varsigma\in C^\infty_c$ be supported in a 
 $10^{-2}$ neighborhood of $[0,1)^d$ and so that 
 \Be\label{spacelocalization}
 \sum_{\nu\in \bbZ^d}\varsigma(\cdot-\nu)\equiv 1,\Ee
and denote $\varsigma_\nu=\varsigma(\cdot-\nu)$, $\nu\in \bbZ^d$. 
The following identity has been proved in \cite[Lemma 9.1]{gsu-endpt}.

\begin{lemma}  \label{basicdec} 
Let $\cU$ be a strongly admissible enumeration of $\sH_d $. Then, for every $R\in\SN$ and $\nu\in \bbZ^d$ there is an integer
$N_\nu=N_\nu(R)\ge -1$ and $\{0,1\}$-sequences $\fa^{\ka,\nu}$, $0\leq \kappa\leq b$,
such that for all $g\in L^1_{\rm loc}(\SR^d)$ we have
\Be
S_R^\cU [g\varsigma_\nu]  = \bbE_{N_\nu}[g\varsigma_\nu] +
\sum_{\kappa=0}^{b} T_{N_\nu+\ka}[
g\varsigma_\nu,\fa^{\ka,\nu}].
\label{SET}
\Ee
\end{lemma}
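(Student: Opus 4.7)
The plan is to exploit the support localization of $g\varsigma_\nu$ together with the scale-preserving character of strongly admissible enumerations so as to split $S_R^\cU[g\varsigma_\nu]$ into a ``complete'' piece that collapses to a dyadic expectation $\bbE_{N_\nu}$ and at most $b+1$ ``incomplete'' partial-frequency layers matching $T_{N_\nu+\kappa}[\cdot,\fa^{\kappa,\nu}]$. First I would observe that $\supp(g\varsigma_\nu)$ lies in the $10^{-2}$-neighborhood of $I_\nu=\nu+[0,1)^d$ and in particular inside the five-fold dilate $I_\nu^{**}$. Consequently $u_n^*(g\varsigma_\nu)\ne 0$ forces $\supp(u_n)\cap \supp(\varsigma_\nu)\ne\emptyset$; a direct examination of the dyadic structure shows that every such $u_n=h^{\e(n)}_{k(n),\mu(n)}$ with $k(n)\ge 1$ must satisfy $\supp(u_n)\subset I_\nu^{**}$, while at scale $k=0$ only finitely many neighboring constant functions $h^{\vec 0}_{0,\mu}$ can contribute, and these will be absorbed into the coarse level of the telescoping identity below.

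Next I would introduce the threshold
\[
N_\nu\,=\,N_\nu(R)\,:=\,\sup\Big\{N\ge -1\,\colon\,\text{every }u_n\text{ with }\supp u_n\subset I_\nu^{**}\text{ and }k(n)<N\text{ has }n\le R\Big\}.
\]
Strong admissibility with parameter $b$ says that two Haar functions supported in $I_\nu^{**}$ with scale gap $\ge b$ must appear in increasing-scale order in $\cU$; applied at the critical level this forces every $u_n$ supported in $I_\nu^{**}$ with $k(n)\ge N_\nu+b$ to satisfy $n>R$. Hence only scales $k\in\{N_\nu,\dots,N_\nu+b\}$ can carry partially included families, and one obtains
\[
S_R^\cU[g\varsigma_\nu]\,=\,\sum_{k<N_\nu}\sum_{\mu,\,\e} u_n^*(g\varsigma_\nu)\,u_n\,+\,\sum_{\kappa=0}^{b}\sum_{\mu,\,\e}\fa^{\kappa,\nu}_{\mu,\e}\,2^{(N_\nu+\kappa)d}\,\inn{g\varsigma_\nu}{h^{\e}_{N_\nu+\kappa,\mu}}\,h^{\e}_{N_\nu+\kappa,\mu},
\]
with $\fa^{\kappa,\nu}_{\mu,\e}:=\bbone\{n(k,\mu,\e)\le R\}\in\{0,1\}$ (and $\fa^{\kappa,\nu}_{\mu,\e}=0$ whenever $\supp h^\e_{N_\nu+\kappa,\mu}\not\subset I_\nu^{**}$). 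The first sum is a complete Haar partial-expansion of $g\varsigma_\nu$ up to frequency $N_\nu$: the $h^{\vec 0}_{0,\cdot}$ contributions reconstruct $\bbE_0[g\varsigma_\nu]$ and each complete oscillating scale $k$ adds the martingale difference $\bbE_{k+1}-\bbE_k$, so the total telescopes to $\bbE_{N_\nu}[g\varsigma_\nu]$ (with the convention $\bbE_{-1}\equiv 0$ covering the degenerate case). The second sum is, by the definition \eqref{TNadef}, exactly $\sum_{\kappa=0}^{b}T_{N_\nu+\kappa}[g\varsigma_\nu,\fa^{\kappa,\nu}]$.

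The main obstacle is the boundary bookkeeping at scale $k=0$: neighboring constant Haar functions $h^{\vec 0}_{0,\mu}$ with $\mu\neq\nu$ can barely intersect $\supp(\varsigma_\nu)$, and one must check that their possible contributions do not spoil the clean telescoping. This is handled by the fact that $\varsigma$ has support within $10^{-2}$ of $I_\nu$, so any such marginal coefficient $u_n^*(g\varsigma_\nu)$ either vanishes or can be grouped with the partial-frequency terms $T_{N_\nu+\kappa}$ by extending $\fa^{\kappa,\nu}$ on those few extra entries while keeping the $\{0,1\}$-valued constraint. Once this bookkeeping is performed, the identity \eqref{SET} follows directly.
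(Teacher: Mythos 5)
Your overall strategy---extracting a critical scale from strong admissibility, showing that the selected Haar functions supported in $I^{**}_\nu$ are complete below that scale and absent once the scale gap reaches $b$, telescoping the complete part via $T_k[\cdot,\mathbf 1]=\bbE_{k+1}-\bbE_k$ into $\bbE_{N_\nu}$, and encoding the intermediate window by $\{0,1\}$-sequences---is exactly the intended argument; note that the paper itself does not prove this lemma but quotes it from \cite[Lemma 9.1]{gsu-endpt}, so the comparison is with that proof. The scale-gap step (an excluded element at scale $\le N_\nu$ forces exclusion of everything in $I^{**}_\nu$ at scales $\ge N_\nu+b$) is carried out correctly, as is the observation that every Haar function meeting $\supp\varsigma_\nu$ is supported in $I^{**}_\nu$.

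There is, however, a genuine gap at the coarsest scale. Strong admissibility imposes no constraint on the relative order of Haar functions of equal support size, so there are strongly admissible enumerations and values of $R$ for which, inside $I^{**}_\nu$, some but not all of the indicator functions $h^{\vec 0}_{0,\mu}$ whose cubes meet $\supp\varsigma_\nu$ have index $\le R$ (for instance $h^{\vec 0}_{0,\nu}$ listed first, the neighboring $h^{\vec 0}_{0,\nu+e_1}$ much later, with far-away elements in between). Your $N_\nu$ can never equal $-1$ and in this situation equals $0$, and the asserted identity then fails: $\bbE_{0}[g\varsigma_\nu]$ contains the term $\bbone_{I_{0,\nu+e_1}}\int_{I_{0,\nu+e_1}}g\varsigma_\nu$, which is nonzero for suitable $g\in L^1_{\rm loc}$ because $\supp\varsigma_\nu$ necessarily protrudes into neighboring unit cubes, whereas $S^\cU_R[g\varsigma_\nu]$ does not contain it and no choice of the sequences $\fa^{\kappa,\nu}$ can compensate: the operators $T_N[\cdot,\fa]$ involve only $h^{\ep}_{N,\mu}$ with $\ep\in\Upsilon$, and every such function has mean zero on each unit dyadic cube, so indicator contributions can neither be produced nor cancelled by them. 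For the same reason your proposed remedy---``grouping'' the marginal constant terms with the $T_{N_\nu+\kappa}$ by extending $\fa^{\kappa,\nu}$---cannot work, and those coefficients do not vanish for general $g$. This partially selected unit scale is precisely what the allowance $N_\nu=-1$ in the statement is for: one must treat the indicators as a separate coarsest level, take $\bbE_{N_\nu}$ to be the zero operator in that case, and let the $\kappa=0$ term carry the partially selected indicator contributions under the convention of the cited companion paper. With that case singled out (strong admissibility still guarantees that nothing of generation $\ge b$ inside $I^{**}_\nu$ is selected, so the window keeps width $b+1$), the rest of your argument goes through.
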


We next recall a localization property of the $F^s_{p,q}$-quasinorms; see \cite[2.4.7]{triebel2} (and \cite[2.4.2]{triebel4} for $p=\infty$). 

\begin{lemma}
\label{L_loc} 
Let $0<p, q\leq \infty$ and $s\in \bbR$. Then it holds
\Be\label{patching} \Big\|\sum_{\nu \in \bbZ^d} \vsig_\nu g\Big\|_{F^s_{p,q}} \,\approx\, \Big(\sum_{\nu \in \bbZ^d} 
\big\| \vsig_\nu\,g\big\|_{F^s_{p,q}}^p\Big)^{1/p}.
\Ee
\end{lemma}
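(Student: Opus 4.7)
The result is the standard localization property in Triebel-Lizorkin spaces, proved in \cite[2.4.7]{triebel2} and \cite[2.4.2]{triebel4}. The plan is to sketch both directions, relying on the equivalent local means characterization \eqref{Fspq_localmeans} (and \eqref{Finfty} when $p=\infty$) together with two elementary properties of the partition $\{\vsig_\nu\}_{\nu\in\bbZ^d}$: its bounded overlap, ensuring each $x\in\SRd$ lies in $\supp\vsig_\nu$ for only $O(1)$ indices $\nu$; and the rapid decay of the kernels $\beta_k$ at scale $2^{-k}$, which forces $|L_k[\vsig_\nu g](x)|$ to be dominated by $(1+2^k\dist(x,\supp\vsig_\nu))^{-M}$ times a local maximal expression of $\vsig_\nu g$.

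For the direction $\lesssim$ in \eqref{patching}, I would write $g=\sum_\nu\vsig_\nu g$ and split the pointwise identity $L_kg(x)=\sum_\nu L_k[\vsig_\nu g](x)$ into a near part (the $O(1)$ many $\nu$ with $x$ within distance $2^{-k}$ of $\supp\vsig_\nu$) and a far part absorbed by the kernel decay. After taking the $\ell^q$-norm in $k$ and the $L^p$-norm in $x$, and using either the $\min(p,q,1)$-triangle inequality or Hölder applied to the bounded number of near indices, one is led to
\[
\Big\|\Big(\sum_k 2^{ksq}|L_kg|^q\Big)^{1/q}\Big\|_p^p\;\lc\;\sum_\nu\Big\|\Big(\sum_k 2^{ksq}|L_k[\vsig_\nu g]|^q\Big)^{1/q}\Big\|_p^p,
\]
which is the desired bound via \eqref{Fspq_localmeans}.

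For the reverse direction $\gtrsim$, the key point is that $\|\vsig_\nu g\|_{F^s_{p,q}}^p$ is essentially concentrated on a fixed bounded neighborhood $\Omega_\nu$ of $\nu+[0,1]^d$. For $x\in\Omega_\nu$, one compares $L_k[\vsig_\nu g](x)$ with $\vsig_\nu(x)L_kg(x)$, where the commutator term is controlled by Peetre-type maximal functions of $g$ with a gain of $2^{-k}$ coming from the smoothness of $\vsig_\nu$; for $x\notin\Omega_\nu$ the kernel decay again yields $(1+2^k\dist(x,\Omega_\nu))^{-M}$ times a local bound on $g$. Summing in $\nu$ and exploiting the bounded overlap of the enlarged cubes $\Omega_\nu$ gives
\[
\sum_\nu\|\vsig_\nu g\|_{F^s_{p,q}}^p\lc\|g\|_{F^s_{p,q}}^p.
\]

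The main obstacle will be the case $p=\infty$, since \eqref{Finfty} is a Carleson-type supremum over dyadic cubes rather than an $L^p(\ell^q)$ integral, and the right side of \eqref{patching} becomes $\sup_\nu\|\vsig_\nu g\|_{F^s_{\infty,q}}$. One must classify the dyadic cube $I$ realizing the supremum according to whether it lies inside a single $\Omega_\nu$ or crosses several, handling the latter via the same kernel decay; this is carried out in detail in \cite[2.4.2]{triebel4}.
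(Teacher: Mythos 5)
Your proposal is correct and in substance coincides with the paper's treatment: the paper offers no proof of Lemma \ref{L_loc} at all, simply invoking the standard localization property from \cite[2.4.7]{triebel2} and \cite[2.4.2]{triebel4}, which are exactly the references (and the standard near/far kernel-decay plus pointwise-multiplier argument) that your sketch reproduces, including the correct reading of the right-hand side as a supremum when $p=\infty$.
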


We are now ready to prove the uniform boundedness of the operators $S^\cU_R$. 
We assume that $(s,p,q)\in \fA$, as defined in Definition \ref{Adef},
so that these operators 
can be continuously extended to the whole space $F^s_{p,q}$. More precisely, if $p,q<\infty$, condition \eqref{BBdual} 
holds and $S_R$ is well-defined as in section 1 (that is, extended from $\cS$ to $F^s_{p,q}$ by density).
In order to include as well the cases $p=\infty$ or $q=\infty$, one first considers extensions
of the dual functionals $u^*_n$ to the full space $F^s_{p,q}$ as follows
\Be
u^*_n(f):=\sum_{j=1}^\infty 2^{k(n)d}\int h^{\epsilon(n)}_{k(n),\nu(n)}\, L_j\La_j f, \text{ for } f\in F^s_{p,q};
\label{un*}
\Ee
see the details in \S\ref{addendum}. In this way, the identity in \eqref{SET} remains valid for all $g\in F^s_{p,q}$.

\begin{proposition} \label{8.2}
Let $(s,p,q)\in\fA$. Suppose 
that 
\Be
\label{supEN}
\sup_{N\geq 0} \|\bbE_N\|_{F^s_{p,q}\to F^s_{p,q}} 
+
\sup_{N\geq 0}\sup_{\|\fa\|_{\ell^\infty}\le 1} \|T_N[\cdot, \fa]\|_{F^s_{p,q}\to F^s_{p,q}}  <\infty.
\Ee
Then, for every strongly admissible enumeration $\cU$ it holds
\[
\sup_{R\geq 1}\,\big\|S^{\,\cU}_R\|_{F^s_{p,q}\to F^s_{p,q}} <\infty.
\]
\end{proposition}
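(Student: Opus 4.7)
The plan is to combine the decomposition \eqref{SET} from Lemma \ref{basicdec} with the localization principle \eqref{patching} from Lemma \ref{L_loc}. Since $(s,p,q)\in\fA$, the functionals $u_n^*$ extend continuously to $F^s_{p,q}$ via \eqref{un*}, so each $S_R^{\cU}$ is well-defined and continuous on the whole space. The uniform bound will then follow by writing $f=\sum_\nu \varsigma_\nu f$, localizing $S_R^{\cU}$ cube by cube via Lemma \ref{basicdec}, and reassembling the pieces through Lemma \ref{L_loc}.

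More precisely, given $f\in F^s_{p,q}$ I decompose
\[
S_R^{\cU} f \,=\, \sum_{\nu\in\bbZ^d} g_\nu, \qquad g_\nu := S_R^{\cU}[\varsigma_\nu f],
\]
and Lemma \ref{basicdec} gives
\[
g_\nu \,=\, \bbE_{N_\nu}[\varsigma_\nu f] \,+\, \sum_{\kappa=0}^{b} T_{N_\nu+\kappa}[\varsigma_\nu f,\fa^{\kappa,\nu}].
\]
The assumption \eqref{supEN} immediately yields the cube-by-cube bound
\[
\|g_\nu\|_{F^s_{p,q}} \,\lc\, \|\varsigma_\nu f\|_{F^s_{p,q}},
\]
with constants independent of $R$, $\nu$, $N_\nu$, and the sequences $\fa^{\kappa,\nu}$. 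The crucial geometric observation is that each $g_\nu$ is supported in a fixed enlargement of the cube $\nu+[0,1]^d$, since both $\bbE_{N_\nu}$ and $T_{N_\nu+\kappa}[\cdot,\fa^{\kappa,\nu}]$ preserve supports up to the dyadic scale at level $N_\nu\ge 0$, while $\varsigma_\nu$ itself is localized there. These enlarged supports therefore have bounded overlap, and a reverse-patching variant of \eqref{patching} yields
\[
\|S_R^{\cU} f\|_{F^s_{p,q}} \,\lc\, \Big(\sum_\nu \|g_\nu\|_{F^s_{p,q}}^p\Big)^{1/p} \,\lc\, \Big(\sum_\nu \|\varsigma_\nu f\|_{F^s_{p,q}}^p\Big)^{1/p} \,\lc\, \|f\|_{F^s_{p,q}},
\]
where the last step is \eqref{patching} applied to $f$ itself (with the standard supremum interpretation when $p=\infty$).

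The main obstacle is the reverse-patching inequality $\|\sum_\nu g_\nu\|_{F^s_{p,q}} \lc \big(\sum_\nu \|g_\nu\|_{F^s_{p,q}}^p\big)^{1/p}$ in the display above. This is slightly stronger than the direct form of \eqref{patching}, since the $g_\nu$ are not of the form $\varsigma_\nu\cdot(\text{fixed function})$; one only knows they satisfy a finite-overlap support property. For $p\le 1$ this is essentially immediate from the $p$-subadditivity of the quasi-norm, after writing $\varsigma_{\nu'}\sum_\nu g_\nu$ as a uniformly finite sum of terms $\varsigma_{\nu'}g_\nu$ and invoking the boundedness of the pointwise multiplier $\varsigma_{\nu'}$ on $F^s_{p,q}$. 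For $p>1$ one argues analogously, and when $p=\infty$ one uses the supremum-form of \eqref{patching} from \cite[2.4.2]{triebel4}. The remaining degenerate case $N_\nu=-1$ in \eqref{SET} (where the $\bbE_{N_\nu}$-term should be interpreted as zero) causes no trouble, and apart from these bookkeeping points the argument is a routine assembly of already-established ingredients.
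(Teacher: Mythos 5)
Your proposal is correct and takes essentially the same route as the paper: localize $f=\sum_\nu\varsigma_\nu f$, bound each localized piece via Lemma \ref{basicdec} together with the hypothesis \eqref{supEN}, and reassemble using the bounded overlap, the uniform multiplier property of $\varsigma_{\nu'}$, and the localization principle \eqref{patching}. The only step you state more casually than the paper is the support localization of $S_R^{\cU}(\varsigma_\nu f)$ for general $f\in F^s_{p,q}$ (not just locally integrable $f$), which in the paper is justified by the support property \eqref{e31} of the extended functionals rather than by naive support preservation of $\bbE_{N_\nu}$ and $T_{N_\nu+\kappa}$; since that property is already established in \S\ref{addendum}, this is a citation issue rather than a gap.
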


\begin{proof}
Consider $S_R=S_R^\cU$ as a continuous operator in $F^s_{p,q}$ (as described in \S\ref{addendum}).
Then, the support properties of the extension, see \eqref{e31}, imply that
\[
\varsigma_{\nu'} S_R (f\varsigma_\nu)=0,\quad \mbox{whenever}\quad |\nu-\nu'|_\infty\geq3.
\]
Then, using \eqref{spacelocalization} and \eqref{patching}, 
 \begin{align*}
 \big\|S_R f\big\|_{F^s_{p,q}} & \approx  \Big(\sum_{\nu'}\big\|\varsigma_{\nu'} S_R\big(\sum_{\nu}\varsigma_\nu f\big)\big\|_{F^s_{p,q}}^p\Big)^\frac1p
\\&\lesssim \Big( \sum_{\nu'} \sum_{\nu\,:\,|\nu-\nu'|_\infty\leq2}\Big\| 
  \varsigma_{\nu'}S_R(f\varsigma_\nu)\Big\|_{F^s_{p,q}}^p \Big)^{1/p}
 \\
 &\lesssim \Big( \sum_{\nu} \big\| S_R(f\varsigma_\nu)\big\|_{F^s_{p,q}}^p \Big)^{1/p},
 \end{align*}
using in the last step that $\varsigma_{\nu'}$ is a uniform multiplier in $F^s_{p,q}$; see \cite[4.2.2]{triebel2}.
Then Lemma  \ref{basicdec}  and \eqref{supEN} 
give
 \begin{align*}
 \big\|S_R f\big\|_{F^s_{p,q}} &\lesssim  \Big( \sum_{\nu} \big\| \bbE_{N_\nu}(f\varsigma_\nu)\big\|_{F^s_{p,q}}^p 
+
\big\| \sum_{\kappa=0}^b T_{N_\nu+\kappa}[f\varsigma_\nu, \fa^{\ka,\nu}]\big\|_{F^s_{p,q}}^p \Big)^{1/p} 
 \\
 &\lc_b 
 \Big( \sum_{\nu} \big\|f\varsigma_\nu\big\|_{F^s_{p,q}}^p \Big)^{1/p} \, \approx\, \|f\|_{F^s_{p,q}}. \qedhere
 \end{align*}
\end{proof} 

\begin{remark}
The equivalence in \eqref{patching} is also true with $\vsig$ replaced by $\bbone_{[0,1]^d}$
when
\[
\max\Big\{\frac dp-1, \frac1p-1\Big\}<s<\frac1p,
\]
as in that case characteristic functions of cubes are multipliers in $F^s_{p,q}$.
In particular, for those indices the assertion in Proposition \ref{8.2} holds as well with the weaker notion of \emph{admissible} enumeration; see
\cite[\S3]{gsu}. This is in particular the case when $s=1$ and $d/(d+1)<p<1$.
\end{remark}
 
Finally, we conclude with the following observation, which we shall use to transfer negative results between the operators $\SE_N$ and $S_R$.
The explicit construction is given in \cite[\S11]{gsu-endpt}.

\begin{lemma}\label{L_SRm}
There exists a strongly admissible enumeration $\cU$ with the following property: for every $m\geq 0$ there exists an integer $R(m)\geq 1$ such that
\Be
S^\cU_{R(m)}f\,=\,\SE_m f, \quad f\in C^\infty_c((-5,5)^d).
\label{SREm}
\Ee
\end{lemma}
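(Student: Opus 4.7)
The plan is to construct a strongly admissible enumeration $\cU$ of $\sH_d$ in which the Haar functions relevant to $\SE_m$ on $C^\infty_c((-5,5)^d)$ appear as an initial block, padded only by Haar functions whose supports are disjoint from $(-5,5)^d$ and hence annihilated by the functionals $u^*$ applied to such $f$.

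I first isolate the relevant finite family: for $m\ge 0$ define
\[
\cV_m := \bigl\{h^{\vec 0}_{0,\mu} : \mu\in\{-5,\dots,4\}^d\bigr\}\cup \bigl\{h^\ep_{k,\nu} : \ep\in\Upsilon,\,0\le k\le m-1,\,I_{k,\nu}\cap(-5,5)^d\ne\emptyset\bigr\}.
\]
Using the standard Haar decomposition truncated at level $m-1$, for every $f\in C^\infty_c((-5,5)^d)$ one has the finite identity $\SE_m f = \sum_{u\in\cV_m} u^*(f)\,u$, since all other Haar functions in $\sH_d$ pair trivially with such $f$.

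Next, I build an increasing sequence of finite sets $A_0\subset A_1\subset\cdots$ with $\bigcup_m A_m=\sH_d$, obtained by capping both size and spatial location:
\[
A_m := \bigl\{h^{\vec 0}_{0,\mu} : |\mu|_\infty\le 10(m+1)\bigr\}\cup \bigl\{h^\ep_{k,\nu} : \ep\in\Upsilon,\,0\le k\le m-1,\,\supp h^\ep_{k,\nu}\subset[-10(m+1),10(m+1)]^d\bigr\}.
\]
The enumeration $\cU$ is obtained by listing $A_0$ first, then $A_1\setminus A_0$, then $A_2\setminus A_1$, and so on, with functions inside each block $A_m\setminus A_{m-1}$ ordered so that those with larger support appear earlier. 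By construction $\cV_m\subset A_m$, while every element of $A_m\setminus\cV_m$ has support disjoint from $(-5,5)^d$. Setting $R(m):=|A_m|$ therefore yields $S^\cU_{R(m)} f = \sum_{u\in\cV_m} u^*(f)\,u = \SE_m f$ for $f\in C^\infty_c((-5,5)^d)$, which is the desired identity.

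Finally, I verify strong admissibility with $b=1$. Fix $\nu\in\bbZ^d$ and consider $u_n,u_{n'}$ both supported in $I^{**}_\nu$ with $|\supp u_n|\ge 2^d|\supp u_{n'}|$. Whether a Haar function supported in $I^{**}_\nu$ meets the spatial cap depends only on $|\nu|_\infty$, so there is a common threshold $m^*(\nu)$ such that each such function first enters the filtration $(A_m)_m$ at step $\max(m^*(\nu),k+1)$ for a detail Haar of level $k$, and at step $m^*(\nu)$ for unit-cube Haar. Both expressions are monotone non-decreasing in $k$, so $u_n$ enters no later than $u_{n'}$; and when they enter at the same step the within-block ordering by size places $u_n$ first, yielding strong admissibility. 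The main obstacle is checking this monotonicity carefully at the boundary between the ``spatial cap active'' and ``level cap active'' regimes, and matching unit-cube Haar with level-$0$ detail Haar consistently---both have support size $1$, so no admissibility constraint is imposed between them, and the bookkeeping simplifies once the thresholds $m^*(\nu)$ are set up symmetrically for the two types.
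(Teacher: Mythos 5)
Your reduction to an exhaustion by finite blocks is fine: for $f\in C^\infty_c((-5,5)^d)$ one indeed has $\SE_m f=\sum_{u\in\cV_m}u^*(f)\,u$, your sets $A_m$ increase to $\sH_d$, contain $\cV_m$, and every member of $A_m\setminus\cV_m$ has support disjoint from $(-5,5)^d$, so with $R(m)=\#A_m$ the identity $S^{\cU}_{R(m)}f=\SE_m f$ holds. (For the record, the paper does not prove this lemma in-text; it refers to the explicit construction in \cite[\S11]{gsu-endpt}.) The genuine gap is the strong admissibility check, and it is not merely a bookkeeping issue: the enumeration you built is not strongly admissible for \emph{any} $b$. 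Your key claim that all Haar functions supported in a given $I^{**}_\nu$ share a common spatial threshold $m^*(\nu)$ is false: the cap $[-10(m+1),10(m+1)]^d$ is tested against the individual supports, and since $I^{**}_\nu$ has side $5$ it can straddle the boundary of the cap, so the spatial thresholds of two Haar functions supported in the same $I^{**}_\nu$ can differ by one. That one-step discrepancy is fatal whenever the level cap $k'+1$ of the finer function is well below the spatial threshold, because then the finer function enters a strictly earlier block than a much coarser neighbor.

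Concretely, fix any $b\ge 1$, set $m^*=b+1$, $P=10(m^*+1)$, and $\nu=(P-2,0,\dots,0)$, so that $I^{**}_\nu=[P-4,P+1]\times[-2,3]^{d-1}$. Let $u=h^{\ep}_{0,(P,0,\dots,0)}$ for some $\ep\in\Upsilon$; its support $[P,P+1]\times[0,1]^{d-1}$ lies in $I^{**}_\nu$ but is not contained in $[-10(m+1),10(m+1)]^d$ until $m\ge m^*+1$, so $u$ first appears in the block $A_{m^*+1}\setminus A_{m^*}$. Let $u'=h^{\ep}_{k',\nu'}$ with $k'=m^*-1$ and $\nu'=(2^{k'}(P-4),0,\dots,0)$; its support $[P-4,P-4+2^{-k'}]\times[0,2^{-k'}]^{d-1}$ lies in $I^{**}_\nu\cap[-10(m^*+1),10(m^*+1)]^d$, so $u'$ enters at stage $\max(k'+1,m^*)=m^*$, i.e.\ strictly before $u$, although $|\supp u|=2^{(m^*-1)d}|\supp u'|=2^{bd}|\supp u'|$. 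Since $b$ was arbitrary, no constant witnesses strong admissibility; the failure occurs precisely at the boundary regime you yourself flagged as the main obstacle. The construction can be repaired by letting the spatial cap depend on the level so that coarser functions always reach farther than finer ones by more than the diameter of $I^{**}_\nu$: for instance, at stage $m$ admit the level-$k$ Haar functions ($k\le m-1$, with the unit indicators counted as level $0$) whose support lies in the cube of half-side $10(m+1)-6k$. Then within any fixed $I^{**}_\nu$ the entry stage is nondecreasing in the level (a cap surplus of $6>5$ absorbs the diameter of $I^{**}_\nu$), your within-block ordering by decreasing support size handles ties, one still has $\cV_m\subset A_m$ and the annihilation property, and the rest of your argument goes through; this is in the spirit of the explicit enumeration of \cite[\S11]{gsu-endpt}.
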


\section{The Schauder basis property: proof of Theorem \ref{th1}}
\label{S_th1}

\subsection{\it Necessary conditions}

Suppose that every strongly admissible enumeration $\cU$ of $\sH_d$ is a Schauder basis of $F^s_{p,q}$.
This implies that $\Span\sH_d$ must be dense (hence $p,q<\infty$), and 
\Be
C_{\cU}:=\sup_{R\geq1}\big\|S^\cU_R\big\|_{F^s_{p,q}\to F^s_{p,q}}<\infty.
\label{CcU}
\Ee
Moreover, if we select $\cU$ as in Lemma \ref{L_SRm},
then we must have
\[
\sup_{m\geq0}{\rm Op}_\cS\big(\SE_m, F^s_{p,q}\big)\,\leq \,C_{\cU}<\infty.
\]  
In view of Proposition \ref{fdensity} and Corollary \ref{C_e} this is only possible if (i), (ii) or (iii) in Theorem \ref{th1} hold.

\subsection{\it Sufficient conditions}

Under the assumptions in (i), (ii), and (iii) of Theorem \ref{th1}, the operators $\SE_N$ and $T_N[\cdot, \fa]$ are uniformly bounded in $F^s_{p,q}$, by Theorem \eqref{th3}. So we can use Proposition \ref{8.2} and conclude that \eqref{CcU} must hold.
The density of $\Span\sH_d$ is also true in this range, so we conclude that $\cU$ is a Schauder basis of $F^s_{p,q}$.

\subsection{\it Consequences for the basic sequence property}
Theorem \ref{th3} additionally implies convergence of  basic sequences in the cases when $\Span\sH_d$ is not dense.
Namely, when $p=\infty$ or $q=\infty$, let $f^s_{p,q}$ denote the closure of the $\cS$ in $F^s_{p,q}$.
When $s<1/p$ the subset $\Span \sH_d$ is dense in  $f^s_{p,q}$, so we deduce the following.

\begin{corollary}
Let $(s,p,q)$ be as in (i), (iii) or (iv) in Theorem \ref{th3}.
Then, every admissible enumeration $\cU$ is a Schauder basis of $f^s_{p,q}$.
That is,
\[
f=\sum_{n=1}^\infty u^*_n(f) u_n, \quad \text{for all }\;f\in f^s_{p,q},
\]
with convergence in the norm of $F^s_{p,q}$.
\end{corollary}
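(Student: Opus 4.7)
The plan is to combine the uniform operator bound from Proposition \ref{8.2} with the density of $\Span\sH_d$ in $f^s_{p,q}$ via a standard quasi-Banach basis argument. In each of the cases (i), (iii), (iv) of Theorem \ref{th3} one has $s<1/p$: for (i) with $1<p\leq\infty$ this is built into the range; for (iii) and (iv) with $\frac{d}{d+1}<p\leq 1$ one has $s<1\leq 1/p$. By the observation recalled just before the statement, this gives density of $\Span\sH_d$ in $f^s_{p,q}$. Moreover Theorem \ref{th3} supplies the uniform bounds
\[
\sup_N\|\SE_N\|_{F^s_{p,q}\to F^s_{p,q}}+\sup_{N,\,\|\fa\|_\infty\leq 1}\|T_N[\cdot,\fa]\|_{F^s_{p,q}\to F^s_{p,q}}<\infty,
\]
which are precisely the hypotheses of Proposition \ref{8.2}.

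Next I would invoke Proposition \ref{8.2} to obtain $C_\cU:=\sup_R\|S_R^\cU\|_{F^s_{p,q}\to F^s_{p,q}}<\infty$ for every (strongly) admissible enumeration $\cU$. Here one uses that the linear functionals $u_n^*$ admit continuous extensions to all of $F^s_{p,q}$ as in \eqref{un*}, so each $S_R^\cU$ is a well-defined bounded operator on $F^s_{p,q}$, and hence on its closed subspace $f^s_{p,q}$.

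Finally, a routine density argument yields convergence: given $f\in f^s_{p,q}$ and $\e>0$, pick $g\in \Span\sH_d$ with $\|f-g\|_{F^s_{p,q}}<\e$. Since $g$ is a finite Haar sum and $u_n^*(u_m)=\delta_{nm}$, there is $R_0$ with $S_R^\cU g=g$ for all $R\geq R_0$. Setting $r=\min\{1,p,q\}$, the $r$-quasi-triangle inequality gives, for $R\geq R_0$,
\[
\|S_R^\cU f-f\|_{F^s_{p,q}}^r\leq \|S_R^\cU(f-g)\|_{F^s_{p,q}}^r+\|g-f\|_{F^s_{p,q}}^r\leq (C_\cU^r+1)\,\e^r.
\]
Letting $\e\to 0$ proves $S_R^\cU f\to f$ in the $F^s_{p,q}$-norm, and uniqueness of the coefficient sequence is immediate from biorthogonality.

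There is no substantial obstacle: the corollary simply assembles previously established facts. The one delicate point is the distinction between \emph{admissible} and \emph{strongly admissible} enumerations in the statement, since Proposition \ref{8.2} is formulated for the strongly admissible class. To handle the weaker notion as written, one should additionally verify, case by case in (i), (iii), (iv), the inequality $\max\{\tfrac dp-1,\tfrac 1p-1\}<s<\tfrac 1p$ of the remark following Proposition \ref{8.2}, so that the localization identity \eqref{patching} holds with $\bbone_{[0,1]^d}$ in place of $\vsig$; outside that subregion one should quote only the strongly admissible version of Proposition \ref{8.2}.
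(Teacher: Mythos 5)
Your argument is correct and takes essentially the same route as the paper: uniform boundedness of the partial sum operators via Theorem \ref{th3} together with Proposition \ref{8.2}, density of $\Span\sH_d$ in $f^s_{p,q}$ when $s<1/p$, and a standard quasi-norm approximation argument. Your caveat about admissible versus strongly admissible enumerations is well taken, since Lemma \ref{basicdec} and Proposition \ref{8.2} are stated for the strongly admissible class and the passage to merely admissible enumerations is justified in the paper only, via the remark following Proposition \ref{8.2}, in the subrange $\max\{\tfrac dp-1,\tfrac 1p-1\}<s<\tfrac 1p$.
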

\begin{remark}
Observe that we have excluded the cases (ii) and (v) in Theorem \ref{th3}.
In these cases we can only say that $\cU$ is a Schauder basis of the subspace
\[
{\overline{\Span\sH_d}}^{F^s_{p,q}}.
\]
A precise description of this subspace in those cases, however, is not clear.
In the range (ii), ie $s=1$ (and $q\leq 2$) this subspace cannot contain the Schwartz class $\cS$, as shown by Proposition \ref{fdensity}. 
On the other hand, in the case (v), {\it i.e.} $s=0$ and $p=\infty$,
 this subspace \emph{strictly} contains $f^0_{\infty,q}$. 
Indeed, first of all one has
\[
f^0_{\infty,\infty}\cap \Span\sH_d\;=\;\{0\}; 
\]
see \cite[Proposition 5.1]{gsu-endpt}. 
Next, for all $q\le \infty$, the inclusion \[C^\infty_c(\SR^d)\subset {\overline{\Span\sH_d}}^{F^0_{\infty,q}} \]
follows, when $d=1$, from the elementary embedding $B^{1/p}_{p,\infty}(\SR)\hookrightarrow F^0_{\infty,q}(\SR)$ and the 
corresponding result for $B^{1/p}_{p,\infty}(\SR)$ in \cite[Proposition 8.6]{gsu-endpt}.
When $d\geq 2$, one can approximate each $f\in C^\infty_c(\SR^d)$ by a linear combination of functions $g^1(x_1)\cdots g^d(x_d)$
with  $g^i\in C^\infty_c(\SR)$, and then use the previous result.
\end{remark}

\section{The unconditional basis property: proof of Theorem \ref{th_unc}}
\label{S_th_unc}

The fact that $\sH_d$ is an unconditional basis of $F^s_{p,q}$ when \eqref{range1} and \eqref{q_range} hold was shown by Triebel in  \cite[Theorem 2.21]{triebel-bases}.
We now indicate references for the \emph{negative} end-point results, corresponding to the dotted or dashed lines around the green region in Figure \ref{fig_unc}.

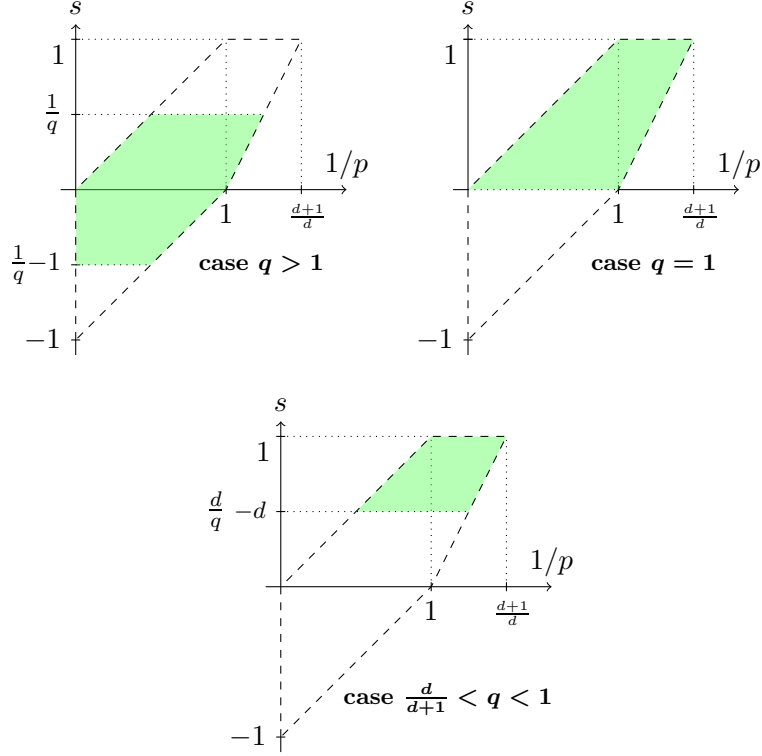
\begin{figure}[h]
 \centering
\subfigure
{\begin{tikzpicture}[scale=2]

\node [right] at (0.75,-0.5) {{\footnotesize {\bf case $\boldsymbol{q>1}$}}};

\draw[->] (-0.1,0.0) -- (1.8,0.0) node[above] {${1}/{p}$};
\draw[->] (0.0,-0.0) -- (0.0,1.1) node[above] {$s$};
\draw (0.0,-1.1) -- (0.0,-1.0)  ;

\draw (1.0,0.03) -- (1.0,-0.03) node [below] {$1$};
\draw (1.5,0.03) -- (1.5,-0.03) node [below] {{\tiny $\;\;\frac{d+1}{d}$}};
\draw (0.03,1.0) -- (-0.03,1.00);
\node [left] at (0,0.9) {$1$};
\draw (0.03,.5) -- (-0.03,.5) node [left] {$\tfrac{1}{q}$};
\draw (0.03,-.5) -- (-0.03,-.5) node [left] {$\tfrac{1}{q}${\small{$-1$}}};
\draw (0.03,-1.0) -- (-0.03,-1.00) node [left] {$-1$};

\draw[dotted] (1.0,0.0) -- (1.0,1.0);
\draw[dotted] (0,1.0) -- (1.0,1.0);
\draw[dotted] (1.5,0.0) -- (1.5,1.0);

\path[fill=green!70, opacity=0.4] (0.0,0.0) -- (.5,.5)-- (1.25,0.5) -- (1,0)--(.5,-.5) -- (0,-0.5)--(0,0);
\draw[dotted] (0,0.5)--(1.25,0.5);
\draw[dotted] (0,-0.5)--(0.5,-0.5);

\draw[dashed] (0.0,-1.0) -- (0.0,0.0) -- (1.0,1.0) -- (1.5,1.0) -- (1.0,0.0) --
(0.0,-1.0);

\end{tikzpicture}
}
\subfigure
{
\begin{tikzpicture}[scale=2]

\node [right] at (0.75,-0.5) {{\footnotesize {\bf case $\boldsymbol{q=1}$}}};

\draw[->] (1,0.0) -- (1.8,0.0) node[above] {${1}/{p}$};
\draw[->] (0.0,-0.0) -- (0.0,1.1) node[above] {$s$};
\draw (0.0,-1.1) -- (0.0,-1.0)  ;
\draw (-0.1,0) -- (0.0,0)  ;

\draw (0.03,1.0) -- (-0.03,1.00);
\node [left] at (0,0.9) {$1$};
\draw (1.0,0.03) -- (1.0,-0.03) node [below] {$1$};
\draw (1.5,0.03) -- (1.5,-0.03) node [below] {{\tiny $\;\;\frac{d+1}{d}$}};
\draw (0.03,1.0) -- (-0.03,1.00);
\draw (0.03,-1.0) -- (-0.03,-1.00) node [left] {$-1$};

\draw[dotted] (1.0,0.0) -- (1.0,1.0);
\draw[dotted] (0,1.0) -- (1.0,1.0);
\draw[dotted] (1.5,0.0) -- (1.5,1.0);

\path[fill=green!70, opacity=0.4] (0.0,0) -- (1,1)-- (1.5,1) -- (1,0)--(0,0);
\draw[dotted] (0,0)--(1,0);

\draw[dashed] (0.0,-1.0) -- (0.0,0.0) -- (1.0,1.0) -- (1.5,1.0) -- (1.0,0.0) --
(0.0,-1.0);

\end{tikzpicture}
}

\subfigure
{
\begin{tikzpicture}[scale=2]

\node [right] at (0.35,-0.75) {{\footnotesize {\bf case $\boldsymbol{\frac d{d+1}<q<1}$}}};

\draw[->] (-0.1,0.0) -- (1.8,0.0) node[above] {${1}/{p}$};
\draw[->] (0.0,-0.0) -- (0.0,1.1) node[above] {$s$};
\draw (0.0,-1.1) -- (0.0,-1.0)  ;

\draw (1.0,0.03) -- (1.0,-0.03) node [below] {$1$};
\draw (1.5,0.03) -- (1.5,-0.03) node [below] {{\tiny $\;\;\frac{d+1}{d}$}};
\draw (0.03,1.0) -- (-0.03,1.00);
\node [left] at (0,0.9) {$1$};
\draw (0.03,.5) -- (-0.03,.5) node [left] {$\tfrac{d}{q}$ {\footnotesize $-d$}};
\draw (0.03,-1.0) -- (-0.03,-1.00) node [left] {$-1$};

\draw[dotted] (1.0,0.0) -- (1.0,1.0);
\draw[dotted] (0,1.0) -- (1.0,1.0);
\draw[dotted] (1.5,0.0) -- (1.5,1.0);

\path[fill=green!70, opacity=0.4] (.5,.5)-- (1,1)--(1.5,1) -- (1.25,0.5)--(0.5,0.5);
\draw[dotted] (0,0.5)--(1.25,0.5);

\draw[dashed] (0.0,-1.0) -- (0.0,0.0) -- (1.0,1.0) -- (1.5,1.0) -- (1.0,0.0) --
(0.0,-1.0);

\end{tikzpicture}
}
\caption{Parameter domain for unconditionality in the cases $q>1$, $\;q=1$ and $d/(d+1)<q<1$, respectively.}\label{fig_unc}
\end{figure}

The trivial cases correspond to the lines $p=\infty$, $s=1/p$, and to the line $s=1/p-1$ with $p>1$. 
In all of them not even the Schauder basis property may hold. Namely, if $p=\infty$ then $F^s_{\infty, q}$ is not separable, and hence 
$\Span\sH_d$ is not dense (see however Remark \ref{R_unc} below for the validity of unconditionality in the subspace $f^s_{\infty,q}$). 
The other two cases are excluded because $(s,p,q)\not\in\fA$, and hence \eqref{BBdual} fails.


Concerning the horizontal line $s=1$, this is a borderline of the unconditionality region when $d/(d+1)\leq p, q\leq 1$.  
This case is excluded by Proposition \ref{fdensity}, since $\Span\sH_d$ is not dense in $F^1_{p, q}$, so 
the Schauder basis property
 cannot hold here. 

At the line $s=d-d/p$, for $d/(d+1)< p \leq 1$, we have a positive Schauder basis result for strongly admissible $\cU$,  by Theorem \ref{th1}.
So we must prove that such a basis cannot be unconditional in $F^{s}_{p,q}$. This was already  shown in \cite[Theorem 13.1]{gsu-endpt}, based on an explicit example which works  well 
in both the Besov and the Triebel-Lizorkin setting.  

Finally, we consider the horizontal lines of the green region which lie inside the open pentagon $\fP$.
In \cite{su}, the failure of unconditionality in these lines was shown in the case $q>1$ and $d=1$, indeed  for all exponents  $p\geq d/(d+1)$ (by \cite[Remark 7.1]{su}). Here 
we show how to modify the arguments in that paper to cover as well the cases $q\leq1$, and extend the construction to all $d\geq1$. 

We recall some notation from \cite{su}. To each finite set $E\subset\sH_d$ we associate the projection operator
\[
P_E(f)=\sum_{h\in E}\langle f, h^*\rangle h,
\]
where $h^*=2^{kd} h$ is the dual functional of a Haar function $h\in\sH_d$ of frequency $2^k$. We also write HF$(E)$ for the set of all Haar frequencies $2^k$ of elements $h\in E$. 

We first remark that the results in \cite[\S6]{su} remain valid when $q\leq 1$.
Namely, for each $N\geq2$, an explicit construction is given of a function $f=f_N\in F^{1/q-1}_{p,q}(\SR)$ and a set $E=E_N\subset\sH_1$ with $\#{\rm HF}(E)\leq N4^N$ such that\footnote{In the notation of \cite[\S6]{su}, one should consider sets $A$ of \emph{consecutive} Haar frequencies, so that the associated ``density'' number in \cite[(43)]{su} takes the value $Z=N$.}
\Be
\|f_N\|_{F^{1/q-1}_{p,q}(\SR)}\lesssim N^{1/q} \mand \|P_{E_N}(f_N)\|_{F^{1/q-1}_{p,q}(\SR)} \geq N^{1+\frac1q},
\label{fNR}
\Ee
if $0<q\leq p<\infty$. In particular, for $d=1$,
\[
\|P_{E_N}\|_{F^{1/q-1}_{p,q}\to F^{1/q-1}_{p,q}}\,\gtrsim \, N,
\]
and hence $\sH_1$ is not unconditional at the lower segment of the green region in Figure \ref{fig_unc}.

%

When $d\geq 2$, the above example can be adapted in two different ways. 
If $1<q<\infty$, one considers the tensorized functions 
\[
F_N(x_1,x'):= f_N(x_1)\otimes \chi(x'),
\] 
where $f_N$ is as in \eqref{fNR} and  $\chi\in C^\infty_c((-1,2)^{d-1})$ with $\chi\equiv1$ in $[0,1]^{d-1}$, and defines the sets
\[
\cE_N:=\Big\{ h\otimes \bbone_{[0,1]^{d-1}}\mid h\in E_N\Big\}.
\] 
Then, a standard computation (as in \S\ref{S_ExFd} above) 
 gives
\[
 \|F_N\|_{F^{1/q-1}_{p,q}(\SR^d)}\lesssim N^{1/q} \mand \|P_{\cE_N}(F_N)\|_{F^{1/q-1}_{p,q}(\SR^d)} \gtrsim N^{1+1/q}.
\]

When $q\leq 1$, one considers instead the natural generalization to $\SR^d$ of the construction in \cite[\S6]{su}, namely using the test function
$\prod_{i=1}^d\eta(x_i)$,
in place of the one dimensional function $\eta$ in \cite[(46)]{su}.
More precisely, if 
 $ 1\leq \kappa \leq 4^{Nd}$,
 $b_\kappa=\kappa N$ and $1\leq \sigma\leq N$, one defines the functions
\[
\cY_{\kappa,\sigma}(x_1,\ldots, x_d)=
\sum_{{(\nu_1,\ldots,\nu_d)\in\SZ^d}\atop{0\leq \nu_i<2^{b_\kappa-N-2}}}
2^{-\sigma d}\,
\prod_{i=1}^d\eta\big(2^{b_\kappa+N-\sigma}(x_i-2^{N+2-b_\kappa}\nu_i)\big)
\]
and, for $t\in[0,1]$,
\[
f_{t}=\sum_{\kappa=1}^{4^{Nd}}r_\kappa(t)\,2^{-(b_\kappa+N)(\frac dq-d)}\,\sum_{\sigma=1}^N \,\cY_{\kappa, \sigma},
\]
where $r_k(t)$ is a Rademacher function.
Then, arguing as in \cite[Lemma 6.2]{su} and \cite[Proposition 6.3]{su}, if $0<q\leq p$ one verifies that
\[
\|f_{t}\|_{F^{d/q-d}_{p,q}(\SR^d)}\lesssim \,N^{1/q},
\]
and that for some $t_0$ and some $E\subset \sH_d$ with HF$(E)\subset \{2^k\}_{ 1\leq k\leq N4^{dN}}$,
\[  
\big\|P_E(f_{t_0})\big\|_{F^{d/q-d}_{p,q}(\SR^d)}\gtrsim\,N^{1+\frac1q}.
\]
This completes the proof of Theorem \ref{th_unc}.
\ProofEnd

\begin{remark}
\label{R_unc}
If $p=\infty$ one can  ask whether the Schauder basis property in the subspace $f^s_{\infty,q}$, for $-1<s< 0$, can be upgraded to unconditional basis.
This is certainly true when $1/q-1<s<0$, by the uniform boundedness of 
the projection operators $P_E$ (which follows by duality from the corresponding result for $F^{-s}_{1,q'}$), and by the density of $\Span\sH_d$ in $f^s_{\infty,q}$.

We now show that at the endpoint $s=1/q-1$ unconditionality must fail. If not, the operators $P_E$ would be uniformly bounded
in $f^{-1/q'}_{\infty, q}$, for all finite $E\subset\sH_d$. Fix $p_0\in(q,\infty)$, and for each $N\geq 1$, pick a set $A=A_N\subset\{2^n\}_{n\geq0}$ 
of cardinality $2^N$ and such that $\log_2A$ is $N$-separated. 
Then, by \cite[Theorem 1.4]{su}, for every $E\subset \sH_d$ with $\#{\rm HF}\,(E)\subset A$ it holds
\Be
\|P_{E}\|_{F^{-1/q'}_{p_0,q}}\,\lesssim\, N^{1/q'}.
\label{Pfq}
\Ee
By interpolation one then has, for $\theta\in(0,1)$,
\[
\|P_{E}\|_{F^{-1/q'}_{p_0/\theta,q}}\,\lesssim\, \|P_{E}\|^{\theta}_{F^{-1/q'}_{p_0,q}}\,\lesssim\, N^{\frac{\theta}{q'}}.\quad 
\] 
But this contradicts the lower bound $N^{1/q'}$ (for the supremum of all such sets $E$) asserted in  \cite[Theorem 1.4.ii]{su}. Similar arguments also disprove the unconditionality for $s$ below the critical $1/q-1$.

Finally consider the space $f^0_{\infty,q}$ for $1\le q<\infty$, on which unconditionality fails (since otherwise it would hold on its dual $F^0_{1,q'}$,
see \cite[\S2.1.5]{RuSi96},  on which unconditionality fails by \cite[Prop. 13.3]{gsu-endpt}).
\end{remark}

\begin{remark} We now consider the 
spaces $f^s_{p,\infty}$, when $1<p<\infty$.
The Schauder basis property holds for $1/p-1< s<1/p$ while the unconditional basis property holds only for $1/p-1<s<0$, already by the estimates in \cite{triebel-bases}.  The unconditional basis property does not hold on $f^0_{p,\infty}$ since by duality (\cite[\S2.1.5]{RuSi96}) it would imply it on $F^0_{p',1}$ where it fails by \cite{su}. Finally when $p=q=\infty$ then $F^s_{\infty,\infty}=B^s_{\infty,\infty}$
hence $f^s_{\infty,\infty}=b^s_{\infty,\infty}$, and the unconditional basis property holds for $-1<s<0$ (for the dual statement see \cite{gsu-endpt}).
\end{remark}

\begin{remark} It would be interesting to investigate the   question of unconditionality of the Haar system as a basic sequence in $B^1_{p,q}$ and $F^1_{p,q}$ when $d/(d+1)< p<1$.
\end{remark}

\end{document}